\theoremstyle{plain}
\newtheorem{theorem}{Theorem}
\newtheorem{corollary}{Corollary}
\newtheorem{lemma}{Lemma}
\theoremstyle{definition}
\theoremstyle{remark}
\numberwithin{equation}{section}
\newenvironment{Proof}[1][]
{ 
  \trivlist
  \item[\hskip\labelsep\itshape{\it Proof#1.}]\ignorespaces
}{ 
  \endtrivlist \par
}
\newcommand{\Pol}{
\mathrm{Pol}
}
\newcommand{\Inv}{
\mathrm{Inv}
}
\newcommand{\AND}{
\mathrm{And}
}
\newcommand{\Strike}{
\mathrm{Strike}
}
\begin{document}

\title[The existence of a near-unanimity function is decidable]
{The existence of a near-unanimity function is decidable}
\author{Dmitriy~N. Zhuk}
\address{Department of Mathematics and Mechanics\\
Moscow State University\\
Moscow, Russia 119192}
\email{zhuk@intsys.msu.ru}
\keywords{decidability, near-unanimity, relational structure, essential predicate}
\subjclass[2010]{Primary: 03B50; Secondary: 08A05}
\date{January 20, 2011}

\begin{abstract}

We prove that the following problem is decidable: given a finite set of relations, decide
whether this set admits a near-unanimity function.

\end{abstract}

\maketitle

\section{Preamble}

We say that a function $f$ is a \emph{near-unanimity function}
iff the following condition holds:
$$f(x,y,y,\ldots,y) =
f(y,x,y,\ldots,y) = \ldots = f(y,y,\ldots,y, x) = y.
$$
Near-unanimity functions arise naturally in Clone Theory. 
For example, if a clone contains a near-unanimity function of arity $n$ then
this clone can be described by relations of arity $(n-1)$\cite{pixley,jeavons}.
Hence the number of clones containing a near-unanimity function of arity $n$ is finite.
Also, it can be proved that such clones have a finite basis.
So, clones containing a near-unanimity function can be completely described.
This approach for investigation of the lattice of clones in multi-valued logic is proposed in \cite{lau}.

Near-unanimity functions are also related to the constraint satisfaction problem.
The standard way to parameterize interesting subclasses of the constraint satisfaction
problem is via finite relational structures \cite{FederVardi}. The main
problem is to classify those subclasses that are tractable (solvable in polynomial
time) and those that are NP-complete. A related meta-problem is to decide for
a given finite relational structure whether or not it gives rise to a tractable
subclass of the constraint satisfaction problem.

It is proved in \cite{jeavons} that if a set $G$ of relations on a finite set
admits a near-unanimity function, then the corresponding constraint
satisfaction problem $CSP(G)$ is solvable in polynomial time. Therefore, it is natural
to consider \textbf{NUF-Problem}:
given a finite set of relations $G,$ decide
whether $G$ admits a near-unanimity function.

This problem was presented in \cite{Larose,Maroti2009} as an open problem.
Some partial results were obtained in solving this problem.
In \cite{Kun} Kun and Szab\'o described a polynomial-time algorithm that decides whether a finite poset
admits a near-unanimity function.
A polynomial-time algorithm that
recognizes reflexive, symmetric graphs admitting a near-unanimity function
is presented in~\cite{LaroseLoten}.

Mikl\'os Mar\'oti considered similar problem for functions.
Firstly, he proved that it is undecidable for finite sets $M$ of functions on a set $A$ and
two fixed elements $a, b \in A$
whether $[M]$ contains
a function that behaves as a near-unanimity function on $A\setminus \{a, b\} $\cite{Maroti2007}.
Then he showed that the following problem is decidable:
given a finite set of functions $M,$ decide
whether $[M]$ contains a near-unanimity function \cite{Maroti2009}.

In this paper we prove that \textbf{NUF-Problem} is decidable.
It turns out that to check that a finite set $G$ of relations admits a near-unanimity function
it is enough to check that $G$ admits a near-unanimity function of a calculable arity~$n(G).$

Also in this paper we present a natural reformulation of \textbf{NUF-Problem}.
A relation is called essential if it cannot be presented as a conjunction
of relations with smaller arities.
The set of all essential relations is denoted by $\widetilde R_{k}.$
It is proved that every closed set $G$ of relations
can be uniquely defined by the set of all essential relations from $G.$
Also we show that a set $G$ admits a near-unanimity function if and only if
the set $[G]\cap \widetilde R_{k}$ is finite.
So, \textbf{NUF-Problem} can be reformulated in the following natural way:
given a finite set $G$ of relations, decide whether
the set $[G] \cap \widetilde R_{k}$ is finite. 
\section{Structure of the paper}

This paper is organized as follows.

In Section 3 we give definitions of a predicate, a near-unanimity function,
\textbf{NUF-Problem}, and formulate the main result of this paper:
\textbf{NUF-Problem} is decidable.
The set of all predicates we denote by $R_{k}.$
It turns out that to check that a finite set $G$ of predicates admits a near-unanimity function
it is enough to check that $G$ admits a near-unanimity function of a calculable arity $n(G).$
We give a formula for $n(G)$ in Section 3.
Obviously, we can easily check that a finite set of predicates
admits a near-unanimity function of a fixed arity.
Hence, the problem "Does a relational structure admits NUF" is decidable.

In Section 4 we
give definitions of the Galois connection, an essential predicate, and reformulate \textbf{NUF-Problem}.
Essential predicates are all predicates that cannot be presented as a conjunction
of predicates with smaller arities.
The set of all such predicates we denote by $\widetilde R_{k}.$
We prove in Section 5 that $[[G]\cap \widetilde R_{k}]=[G]$
for every $G\subseteq R_{k}.$
This means that every closed set $G$ of predicates
can be described by the set $G\cap \widetilde R_{k}$ of all essential predicates from $G.$
So, the set of all essential predicates is strong enough.
Here we state that a set $G\subseteq R_{k}$ admits a near-unanimity function if and only if
the set $[G]\cap \widetilde R_{k}$ is finite.
So, \textbf{NUF-Problem} can be reformulated in the following natural way:
given a finite set $G\subseteq R_{k}$, decide whether
the set $[G] \cap \widetilde R_{k}$ is finite.
Also in this section we formulate several theorems and
derive the main result from these theorems.

Section 5 is devoted to essential predicates.
Here we prove a theorem from Section 4
and present other important properties of essential predicates, which are 
used in the following sections.

In Section 6 we prove an important equation that is used in the next sections.
The idea of this equation is following:
if $[\{\rho\}]\cap \widetilde R_{k}$ is finite
(a predicate $\rho$ admits a near-unanimity function)
and $\rho'$ is obtained from $\rho$
by identification of variables, then $\rho'$ can be obtained from $\rho$ without identification of variables.

Section 7 is the most complicated part of this paper.
Here we define notions that allow us to transform formulas.
Roughly speaking we consider formulas as graphs.
We give definitions of a path in a formula, a connected formula, a tree-formula and so on.
The main part of this section is devoted to an important transformation of formulas.
This transformation is based on the equation from Section 6.
It allows us to remove identification of variables form formulas.
Also we define different characteristics of formulas and show
how the transformation changes these characteristics.
At the end of this section, we prove that if $[G]\cap \widetilde R_{k}$ is finite
and has a maximal predicate, then this predicate can be realized by a tree-formula.

In the last section we prove the main theorems.
Here we consider only formulas that realize essential predicates.
The first theorem claims that if we have a tree-formula, then we can obtain a
chain-formula that realizes an essential predicate.
Moreover, the size of the chain-formula depends monotonically on the size of the tree-formula.
The next theorem of this section states that
if we have a large enough chain-formula then these chain-formula can be lengthened.
Moreover, obtained formula is still a chain-formula that realizes an essential predicate.
Hence, there exists an essential predicate of arbitrarily large arity and the set of predicates does not admit
a near-unanimity function.

\section{Main results}

Let $\mathbb N = \{1,2,3,\ldots\},$
$\mathbb N_{0} = \{0\}\cup \mathbb N,$
$E_{k}=\{0,1,2,\ldots,k-1\}.$
Let $P_{k}^{n} = \{f|\; f: E_{k}^{n}\rightarrow E_{k}\}$ for $n\in \mathbb N,$
and let $P_{k} = \bigcup \limits_{n\ge 1} P_{k}^{n}.$

Suppose $F\subseteq P_{k},$ then by $[F]$ we denote the closure of $F$ under superposition \cite{lau}.
A set $F\subseteq P_{k}$ is called a \emph{clone} if
$F$ is closed and $F$ contains all projections.
By $J_{k}$ we denote the set of all projections.

A mapping $E_{k}^{n}\rightarrow \{0,1\}$ is called an \emph{$n$-ary predicate}.
For $n\in \mathbb N_{0}$ let
$$R_{k}^{n} = \{\rho|\; \rho: E_{k}^{n}\rightarrow \{0,1\}\},$$
$$R_{k} = \bigcup \limits_{n\ge 0} R_{k}^{n}.$$
We do not distinguish sharply between predicates and relations. So instead of
$\rho(a_{1},\ldots,a_{n}) = 1$  we also write $(a_{1},\ldots,a_{n})\in \rho.$
Sometimes we write
$a_{1}a_{2}\ldots a_{h}$ instead of
$(a_{1},a_{2},\ldots,a_{h})$
and operate with tuples like with words.
Suppose $\alpha\in E_{k}^{h},$ then by $\alpha(i)$ we denote
$i$-th element of $\alpha.$
We suppose that functions from $P_{k}$ are also defined in the usual way on the tuples or words
from $E_{k}^{h}.$
That is, suppose $\alpha_{1},\ldots,\alpha_{n} \in E_{k}^{h},$ $f\in P_{k}^{n},$
then $f(\alpha_{1},\ldots,\alpha_{n}) = \beta,$
where $\beta \in E_{k}^{h},$ $\beta(i) = f(\alpha_{1}(i),\alpha_{2}(i),\ldots,\alpha_{n}(i))$
for every $i.$


We say that a function $f\in P_{k}^{m}$ \emph{preserves a predicate} $\rho$
if
$f(\alpha_{1},\alpha_{2},\ldots,\alpha_{m}) \in \rho$
for every $\alpha_{1},\alpha_{2},\ldots,\alpha_{m}\in \rho.$
We say that a set $S\subseteq R_{k}$ \emph{admits a function} $f\in P_{k}^{m}$
if $f$ preserves every predicate from $S.$

By $\Pol(\rho)$ we denote the set of all functions $f\in P_{k}$
that preserve predicate~$\rho.$ For $S\subseteq R_{k}$ we put
$$\Pol(S) = \bigcap \limits_{\rho\in S} \Pol (\rho).$$
By $\Inv(f)$ we denote the set of all predicates $\rho\in R_{k}$
that are preserved by function $f.$ For $M\subseteq P_{k}$ we put
$$\Inv(M) = \bigcap \limits_{f\in M} \Inv (f).$$


A function $f\in P_{k}^{n}$ is called a \emph{near-unanimity function}
iff the following condition holds:
$$\forall a,b \in E_{k} \;\; f(a,b,b,\ldots,b) =
f(b,a,b,\ldots,b) = \ldots = f(b,b,\ldots,b, a) = b.$$
By $NUF_{k}^{n}$ we denote the set of all near-unanimity functions from $P_{k}$ of arity~$n.$
Let $NUF_{k} = \bigcup \limits_{n\ge 3} NUF_{k}^{n}.$

In this paper we consider the following problem.

\textbf{NUF-Problem:} Given a finite set $G\subseteq R_{k}$, decide whether there exists a
near-unanimity function $f\in P_{k}$ such that $f\in \Pol(G).$

By $ar(\rho)$ we denote the arity of predicate $\rho\in R_{k}.$
By $ar(G)$ we denote the maximal arity of predicates from $G\subseteq R_{k}.$
If $|G| = \infty,$ then we put $ar(G) = \infty.$

\begin{theorem}\label{maintheorem}

Suppose $G\subseteq R_{k},$ $|G|<\infty,$
then 
$$NUF_{k}\cap \Pol(G) \neq \varnothing \Longleftrightarrow NUF_{k}^{n}\cap \Pol(G)\neq \varnothing,$$
where $n = (k \cdot ar(G))^{2^{2k^{2}}+2}.$

\end{theorem}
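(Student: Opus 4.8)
The plan is to follow the roadmap laid out in Section 2, reducing the problem to a statement about essential predicates and then bounding the arity of an essential predicate produced by chain-formulas. First I would invoke the Galois connection: since $G$ is finite, $\Pol(G)$ contains a near-unanimity function if and only if the closed set of relations $[G] = \Inv(\Pol(G))$ admits one, and by the reformulation theorem announced in Section 4, this happens if and only if $[G] \cap \widetilde R_k$ is finite. So the forward implication $\Leftarrow$ is trivial (a near-unanimity function of some fixed arity $n$ is in particular a near-unanimity function), and all the work is in the contrapositive of $\Rightarrow$: if $G$ admits no near-unanimity function at all, then already $G$ admits no near-unanimity function of arity $n = (k\cdot ar(G))^{2^{2k^2}+2}$. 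Equivalently, I must show that if $[G]\cap\widetilde R_k$ is infinite, then it contains an essential predicate of arity $> n$ would be the wrong direction — rather, I should show that if $[G]\cap\widetilde R_k$ contains an essential predicate whose arity exceeds a certain bound $N$, then it contains essential predicates of arbitrarily large arity, hence is infinite. Then the arity bound for "$G$ admits NUF of arity $n$" must be calibrated so that $n$ forces the finitely many relations witnessing the NUF to already cap the arity of essential predicates below $N$.

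Next I would assemble the structural machinery. Assume for contradiction that $[G]\cap\widetilde R_k$ is infinite; then by the finiteness properties of essential predicates (Section 5), it has essential predicates of unbounded arity, and in particular we may pick one of large arity. By the tree-formula theorem at the end of Section 7, any sufficiently large essential predicate in a closed set can be realized by a tree-formula over $G$ after the identification-removal transformation of Sections 6 and 7. Then by the first theorem of Section 8, a tree-formula realizing an essential predicate can be converted into a chain-formula realizing an essential predicate, with the chain-length controlled monotonically by the size of the tree-formula; and by the second theorem of Section 8, a sufficiently long chain-formula realizing an essential predicate can be lengthened, still realizing an essential predicate of strictly larger arity. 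Iterating produces essential predicates of every arity beyond the threshold, contradicting — wait, it confirms — the infinitude; the point is that the threshold at which lengthening becomes possible is an explicit function of $k$ and $ar(G)$. The number $2^{2k^2}$ is the count of subsets of $E_k^2$ (i.e. of binary relations on $E_k$), which is what governs when a chain-formula is long enough that two "links" must repeat their local pattern — a pigeonhole step — so the exponent $2^{2k^2}+2$ in $n(G)$ is exactly the chain-length past which repetition, hence lengthening, is forced, plus slack for the base $k\cdot ar(G)$ measuring the width of a single link.

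Concretely, the steps in order: (1) reduce via Galois connection and the Section 4 reformulation to: $G$ admits no NUF $\iff$ $[G]\cap\widetilde R_k$ infinite $\iff$ $[G]\cap\widetilde R_k$ contains essential predicates of all large arities; (2) show the contrapositive — if $G$ admits NUF of arity $n=(k\cdot ar(G))^{2^{2k^2}+2}$ then $[G]\cap\widetilde R_k$ is finite — by arguing that an essential predicate of arity $>n$, if one existed, could via Sections 6–8 be realized first by a tree-formula, then by a chain-formula, then lengthened indefinitely, so the set would be infinite, contradicting the finiteness forced by a fixed-arity NUF (a NUF of arity $n$ makes $[G]$ definable by the relations of arity $n-1$ it generates, hence $[G]\cap\widetilde R_k$ has arity $\le n-1$); (3) check that the chain-lengthening threshold from Section 8's second theorem is $\le n$, which is the pigeonhole bound on binary patterns, $2^{2k^2}$, scaled by the link width $k\cdot ar(G)$ and with the small additive constant. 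The main obstacle is step (3): verifying that the explicit constant $(k\cdot ar(G))^{2^{2k^2}+2}$ genuinely dominates the thresholds in the tree-to-chain conversion and the chain-lengthening lemma — this requires tracking how the "size" of a tree-formula bounds the chain-length monotonically, and then how long a chain-formula must be before the repetition argument applies; everything else is bookkeeping on top of the quoted theorems, but that arithmetic reconciliation of three separate bounds into one clean formula is where the delicate work lies.
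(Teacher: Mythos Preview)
Your ingredients are the right ones, but the logical skeleton is tangled, and this matters because the hypotheses of the cited theorems are delicate. Two concrete problems:

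First, you state the contrapositive backwards. The nontrivial direction is $\Rightarrow$: \emph{if some NUF exists, then a NUF of arity $n$ exists}. Its contrapositive is ``no NUF of arity $n$ $\Rightarrow$ no NUF at all'', not ``no NUF at all $\Rightarrow$ no NUF of arity $n$'' (the latter is trivial). Likewise, your step~(2) proposes to show ``NUF of arity $n$ exists $\Rightarrow$ $[G]\cap\widetilde R_k$ finite'', but that already follows immediately from Theorem~\ref{NUFmain} with no need for Sections 6--8; it is not where the content lies.

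Second, and more seriously: the tree-formula and chain-formula machinery (Theorem~\ref{posledovatelnostPredikatov} and the Section~7 results behind it) has $|[G]\cap\widetilde R_k|<\infty$ as a \emph{hypothesis}. The transformation that eliminates identifications, the notion $MAX(G)$, and the termination argument all require a maximal essential predicate to exist. So you cannot begin by assuming $[G]\cap\widetilde R_k$ is infinite and then invoke that machinery; nor can you argue ``if an essential predicate of arity $>n$ existed, Sections 6--8 would produce a long chain'' in a vacuum.

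The paper's proof threads this needle by assuming, for contradiction, \emph{both} that some NUF exists \emph{and} that no NUF of arity $n$ exists. The first assumption gives $|[G\cup SR_k]\cap\widetilde R_k|<\infty$ (via Theorem~\ref{NUFmain}), which licenses Theorem~\ref{posledovatelnostPredikatov}; the second gives $p:=ar([G\cup SR_k]\cap\widetilde R_k)\ge n$. Then Theorem~\ref{posledovatelnostPredikatov} produces a chain-formula realizing an essential predicate of arity exceeding $\log_{kq}(p)\ge\log_{kq}(n)=2^{2k^2}+2$, and Theorem~\ref{beskonechnayaPosledovatelnost} turns that into $|[G\cup SR_k]\cap\widetilde R_k|=\infty$, the desired contradiction. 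Your intuition about the pigeonhole on pairs of subsets of $E_k^2$ giving the $2^{2k^2}$ is correct; what you are missing is that the two assumptions must be held simultaneously so that finiteness feeds the chain construction while the large arity bound feeds the chain-length estimate.
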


Hence, we obtain the following

\begin{corollary}

\textbf{NUF-Problem} is decidable.

\end{corollary}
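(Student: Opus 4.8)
The proof must establish Theorem~\ref{maintheorem}: that a finite relational set $G$ admits \emph{some} near-unanimity function precisely when it admits one of the explicit arity $n=(k\cdot ar(G))^{2^{2k^2}+2}$. The plan is to pass through the Galois-dual picture, where admitting a near-unanimity function is equivalent to the set $[G]\cap\widetilde R_k$ of essential predicates in the closure being finite (this equivalence is the reformulation announced in Section~4). So the real target becomes: if $[G]\cap\widetilde R_k$ contains a predicate of arity larger than $n$, then it contains essential predicates of \emph{arbitrarily large} arity, hence is infinite, hence $G$ admits no near-unanimity function at all. Conversely, finiteness of $[G]\cap\widetilde R_k$ is witnessed by boundedness of the arities, and a bound of $n$ suffices because a near-unanimity function of arity one more than the maximal essential arity can always be extracted.

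First I would set up the machinery of formulas-as-graphs from Section~7: a predicate in $[G]$ is realized by a primitive-positive formula over $G$, and such a formula carries a graph structure (variables and atoms as vertices, incidences as edges). I would use the equation from Section~6 — that identifications of variables in a predicate admitting a near-unanimity function can be undone without identification — to justify the transformation that removes variable identifications from formulas, reducing an arbitrary formula realizing an essential predicate to a tree-formula, and controlling how the size parameters change under this transformation. The key quantitative point is that the transformation is size-monotone in a controlled way, so a tree-formula realizing an essential predicate of large arity exists whenever \emph{any} formula does.

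Next I would invoke the two theorems of the last section. The first converts a tree-formula realizing an essential predicate into a chain-formula realizing an essential predicate, with the chain length bounded monotonically in terms of the tree size; the chain structure is the linear, iterable skeleton we need. The second is the pumping step: a sufficiently long chain-formula realizing an essential predicate can be lengthened — because among the boundedly many ``local types'' (here the double-exponential $2^{2k^2}$ in the exponent is the count of relevant types, roughly subsets of pairs over $E_k$) a repetition must occur, and cutting and re-splicing at the repetition produces a strictly longer chain-formula still realizing an essential predicate. Iterating yields essential predicates of unbounded arity, so $[G]\cap\widetilde R_k$ is infinite and $G$ admits no near-unanimity function. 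Contrapositively, if $G$ admits no near-unanimity function of arity $n$, then (since finiteness of $[G]\cap\widetilde R_k$ would yield, via the Galois correspondence and a standard argument, a near-unanimity function of arity at most one more than the top essential arity, which is below $n$) the set $[G]\cap\widetilde R_k$ is infinite, so there is an essential predicate of arity exceeding the chain-length threshold corresponding to $n$, and hence $G$ admits no near-unanimity function at all — giving the left-to-right contrapositive of the theorem. The right-to-left direction is immediate since $NUF_k^n\subseteq NUF_k$.

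The main obstacle is the pumping argument for chain-formulas: one must identify the correct finite invariant attached to each link of the chain so that a repeated invariant genuinely allows re-splicing, and — crucially — must verify that the lengthened formula still realizes an \emph{essential} predicate rather than collapsing into a conjunction of lower-arity relations. Keeping essentiality stable under the cut-and-paste, and tracking the exact size bounds so that the threshold comes out as the stated $(k\cdot ar(G))^{2^{2k^2}+2}$ rather than something larger, is where the technical weight of Sections~5–8 is concentrated; the Galois reformulation and the tree-to-chain reduction are comparatively routine once the formula-graph formalism is in place. For the corollary itself, once Theorem~\ref{maintheorem} is in hand, decidability is immediate: given finite $G$, compute $n$ from $k$ and $ar(G)$, then exhaustively search the finitely many $f\in P_k^n$ for one that is a near-unanimity function and preserves every predicate in $G$; this halts and is correct by the theorem.
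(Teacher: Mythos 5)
Your proposal is correct and follows the paper's argument exactly: establish Theorem~\ref{maintheorem} via the Galois reformulation to finiteness of $[G]\cap\widetilde R_k$, the tree-to-chain reduction, and the pumping argument for chain-formulas (with the $2^{2k^2}$ pigeonhole count coming from pairs of subsets of $E_k\times E_k$), then note that the Corollary follows immediately because a near-unanimity function of the fixed computable arity $n=(k\cdot ar(G))^{2^{2k^2}+2}$ can be found, if it exists, by exhaustive search over the finitely many elements of $P_k^n$.
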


\section{Main statements}

In this section we formulate several statements and prove the main theorem of this paper.

By $\sigma_{k}^{=}$ we denote the predicate from $R_{k}$ such that
$$\sigma_{k}^{=}(x,y) = 1 \Longleftrightarrow x = y.$$
By $false$ we denote the predicate of arity 0 that takes value 0.
Let us give a short definition of the closure operator $[\;]$ on the set $R_{k}.$
You can find a rigorous definition in monograph \cite{lau}.
Suppose $S\subseteq R_{k},$ then by $[S]$ we denote the set of all
predicates $\rho\in R_{k}$ that can be presented by a formula over the set $S\cup \{\sigma_{k}^{=}, false\}$
with only propositional functor $\wedge$ and existential quantifier. That is
$$\rho (x_{1},\ldots,x_{n}) = \exists
y_{1}\ldots \exists y_{l} \;
\rho_{1}(z_{1,1},\ldots,z_{1,n_{1}})\wedge \ldots \wedge
\rho_{s}(z_{s,1},\ldots,z_{s,n_{s}}),$$
where $\rho_{1},\ldots,\rho_{s}\in S\cup \{\sigma_{k}^{=}, false\},$
$z_{i,j}\in \{x_{1},\ldots,x_{n},y_{1},\ldots,y_{l}\}.$


\begin{theorem}\cite{bond,lau}\label{bondar1}
Suppose $J_{k}\subseteq M\subseteq P_{k},$ $S\subseteq R_{k},$
then $[M] = \Pol(\Inv(M)),$
$[S] = \Inv(\Pol(S)).$

\end{theorem}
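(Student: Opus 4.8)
This is the classical Galois connection between operations and relations on a finite set; the plan is to establish the four inclusions that together give the two equalities. The two ``soundness'' inclusions $[M]\subseteq\Pol(\Inv(M))$ and $[S]\subseteq\Inv(\Pol(S))$ are routine. For any $T\subseteq R_{k}$ the set $\Pol(T)$ is closed under superposition and contains $J_{k}$ (every projection preserves every predicate), so $\Pol(\Inv(M))$ is a clone, and it contains $M$ because each $f\in M$ preserves each $\rho\in\Inv(M)$ by definition of $\Inv(M)$; hence $[M]\subseteq\Pol(\Inv(M))$. Dually, for any $N\subseteq P_{k}$ the set $\Inv(N)$ is closed under $\wedge$ and $\exists$ (if each of $f$'s arguments, padded with witnesses, lies in the relevant $\rho_{i}$, then $f$ of them does too, witnessed by $f$ of the witnesses) and it contains $\sigma_{k}^{=}$ and $false$ because every function preserves these two predicates; since $S\subseteq\Inv(\Pol(S))$, the set $\Inv(\Pol(S))$ is a primitive positive closed set containing $S\cup\{\sigma_{k}^{=},false\}$, hence contains $[S]$.

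Both remaining inclusions rest on one construction. For a clone $F$ with $J_{k}\subseteq F\subseteq P_{k}$ and an integer $d\ge1$ put $l:=k^{d}$, fix an enumeration $E_{k}^{d}=\{t_{1},\dots,t_{l}\}$, let $p_{j}\in E_{k}^{l}$ be defined by $p_{j}(i):=t_{i}(j)$ for $1\le j\le d$, and let $\mathrm{Fr}_{F}(d)\subseteq E_{k}^{l}$ be the subalgebra of $(E_{k};F)^{l}$ generated by $p_{1},\dots,p_{d}$ — equivalently, $\mathrm{Fr}_{F}(d)=\{(g(t_{1}),\dots,g(t_{l})):g\in F\cap P_{k}^{d}\}$, the set of value tables of the $d$-ary operations of $F$. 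The key lemma is that $\mathrm{Fr}_{\Pol(S)}(d)\in[S]$. To prove it, let $\Theta$ be the intersection of all $\tau\in[S]$ of arity $l$ containing $p_{1},\dots,p_{d}$: this intersection is over a finite set (there are only finitely many relations of arity $l$ over $E_{k}$), it is nonempty since $E_{k}^{l}=\sigma_{k}^{=}(y_{1},y_{1})\wedge\cdots\wedge\sigma_{k}^{=}(y_{l},y_{l})$ lies in $[S]$, and $[S]$ is closed under $\wedge$, so $\Theta\in[S]$. The inclusion $\mathrm{Fr}_{\Pol(S)}(d)\subseteq\Theta$ is immediate because each $\tau$ in the family lies in $\Inv(\Pol(S))$ and is therefore closed under $\Pol(S)$. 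For the reverse inclusion take $\gamma\in\Theta$ and let $g_{\gamma}\in P_{k}^{d}$ be the function with $g_{\gamma}(t_{i})=\gamma(i)$; if $g_{\gamma}\notin\Pol(S)$ there are $\tau\in S$ of some arity $p$ and $\beta_{1},\dots,\beta_{d}\in\tau$ with $g_{\gamma}(\beta_{1},\dots,\beta_{d})\notin\tau$. Writing $t_{i_{c}}:=(\beta_{1}(c),\dots,\beta_{d}(c))$ for $1\le c\le p$ and forming the substitution instance $\tau'(y_{1},\dots,y_{l}):=\tau(y_{i_{1}},\dots,y_{i_{p}})\in[S]$, one checks that $p_{1},\dots,p_{d}\in\tau'$ (because $\beta_{j}\in\tau$) while $\gamma\notin\tau'$ (because $g_{\gamma}$ violates $\tau$ on $\beta_{1},\dots,\beta_{d}$), contradicting $\Theta\subseteq\tau'$. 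Hence $g_{\gamma}\in\Pol(S)$, i.e.\ $\gamma\in\mathrm{Fr}_{\Pol(S)}(d)$, and $\mathrm{Fr}_{\Pol(S)}(d)=\Theta\in[S]$.

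Granting the lemma, consider first an $n$-ary $\rho\in\Inv(\Pol(S))$. If $\rho=\varnothing$ it is defined by $false\wedge\sigma_{k}^{=}(x_{1},x_{1})\wedge\cdots\wedge\sigma_{k}^{=}(x_{n},x_{n})$, so assume $\rho=\{r_{1},\dots,r_{m}\}$ with the $r_{i}$ distinct, and put $b_{l}:=(r_{1}(l),\dots,r_{m}(l))\in E_{k}^{m}$ for $1\le l\le n$. Apply the construction with $F=\Pol(S)$ and $d=m$: the relation $\mathrm{Fr}_{\Pol(S)}(m)$ has arity $k^{m}$ with coordinates indexed by $E_{k}^{m}$, so in particular by $b_{1},\dots,b_{n}$. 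Substituting $x_{l}$ for the coordinate indexed by $b_{l}$ and existentially quantifying the remaining coordinates gives, as a primitive positive definition over $\mathrm{Fr}_{\Pol(S)}(m)$, the relation $\{(g(b_{1}),\dots,g(b_{n})):g\in\Pol(S)\cap P_{k}^{m}\}$; since $g(b_{l})=g(r_{1}(l),\dots,r_{m}(l))$, this equals $\{g(r_{1},\dots,r_{m}):g\in\Pol(S)\cap P_{k}^{m}\}$, which is exactly $\rho$ (``$\subseteq$'' uses $r_{1},\dots,r_{m}\in\rho$ and $\rho\in\Inv(\Pol(S))$, ``$\supseteq$'' uses the $m$-ary projections). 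As $\mathrm{Fr}_{\Pol(S)}(m)\in[S]$ and $[S]$ is closed under primitive positive definability, $\rho\in[S]$, proving $\Inv(\Pol(S))\subseteq[S]$. Dually, let $f\in\Pol(\Inv(M))$ be $n$-ary and put $\rho_{f}:=\mathrm{Fr}_{[M]}(n)$, a subalgebra of $(E_{k};[M])^{k^{n}}$, hence of $(E_{k};M)^{k^{n}}$, so $\rho_{f}\in\Inv(M)$; thus $f$ preserves $\rho_{f}$ and $f(p_{1},\dots,p_{n})\in\rho_{f}$. But $f(p_{1},\dots,p_{n})=(f(t_{1}),\dots,f(t_{k^{n}}))$ is the value table of $f$, and every element of $\rho_{f}$ is the value table of some $g\in[M]\cap P_{k}^{n}$; since $t_{1},\dots,t_{k^{n}}$ exhaust $E_{k}^{n}$, this forces $f=g\in[M]$, proving $\Pol(\Inv(M))\subseteq[M]$.

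I expect the main obstacle to be the lemma — the substitution argument, and, throughout, the bookkeeping needed to keep ``$g$ evaluated at a point of $E_{k}^{d}$'' distinct from ``$g$ applied coordinatewise to $d$ tuples'' (the identity $(g(b_{1}),\dots,g(b_{n}))=g(r_{1},\dots,r_{m})$ above is of exactly this type). Once the index identifications are set up correctly, the rest is manipulation of indices. The predicates $\sigma_{k}^{=}$ and $false$ cause no difficulty anywhere: they belong to $[S]$ by definition of $[\ ]$ and are preserved by every function, so they never obstruct any of the four inclusions.
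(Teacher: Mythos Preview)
Your proof is correct and is the standard textbook argument for the $\Pol$--$\Inv$ Galois connection. Note, however, that the paper does not prove this theorem at all: it is stated with a citation to \cite{bond,lau} and used as a known external result, so there is no ``paper's own proof'' to compare against. Your free-algebra relation $\mathrm{Fr}_{F}(d)$ is essentially the same object the paper later denotes $\rho_{M,G}$ (with $M=\{p_{1},\dots,p_{d}\}$ and $G=S$); the paper's Lemma~\ref{trivialG} records that $\rho_{M,G}\in[G]$, but proves it \emph{by invoking} Theorem~\ref{bondar1}, whereas you prove the analogous fact directly via the substitution-instance argument, which is exactly what is needed to establish Theorem~\ref{bondar1} from scratch.
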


\begin{theorem}\cite{bond,lau} \label{bondar2}
Let $\mathbb L(P_{k})$ be the set of all clones of $P_{k},$
$\mathbb L(R_{k})$ be the set of all closed subsets of $R_{k}.$
Then $Pol(S)\in \mathbb L(P_{k})$ for every $S\subseteq R_{k},$
$Inv(M)\in \mathbb L(R_{k})$ for every $M\subseteq P_{k}.$
Moreover
$$\Inv: \mathbb L(P_{k}) \longrightarrow \mathbb L(R_{k}),$$
$$\Pol: \mathbb L(R_{k}) \longrightarrow \mathbb L(P_{k})$$
are bijective mappings, which reverse the partial order $\subseteq,$
$i.\,e.,$ it holds
$$\forall A,B\in \mathbb L(P_{k}): A\subseteq B \Rightarrow \Inv(B)\subseteq \Inv(A),$$
$$\forall S,T\in \mathbb L(R_{k}): S\subseteq T \Rightarrow \Pol(T)\subseteq \Pol(S).$$

\end{theorem}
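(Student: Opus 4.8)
The plan is to deduce Theorem~\ref{bondar2} from the already-available Theorem~\ref{bondar1}, which supplies the two identities $[M]=\Pol(\Inv(M))$ for $J_k\subseteq M\subseteq P_k$ and $[S]=\Inv(\Pol(S))$ for $S\subseteq R_k$. Granting those, the remaining work splits into three pieces: (i) the maps $\Pol$ and $\Inv$ actually land in $\mathbb L(P_k)$ and $\mathbb L(R_k)$, that is, $\Pol(S)$ is always a clone and $\Inv(M)$ is always a closed set of predicates; (ii) both maps reverse the inclusion order; (iii) restricted to these two lattices the maps are mutually inverse bijections.

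For~(i), the clone claim is a direct computation: every projection preserves every predicate (its value on $\alpha_{1},\dots,\alpha_{n}$ is one of the $\alpha_{i}$, which lies in $\rho$ by hypothesis), and if $f,g_{1},\dots,g_{m}$ all preserve $\rho$ then the superposition $f(g_{1},\dots,g_{m})$ preserves $\rho$ by applying the definition of ``preserves'' componentwise; hence $J_k\subseteq\Pol(S)$ and $\Pol(S)=[\Pol(S)]$. For $\Inv(M)$, I would check that, for a fixed $f$, the class of $f$-invariant predicates is closed under the operations that generate the operator $[\;]$: the predicates $\sigma_k^{=}$ and $false$ are invariant under every function; a conjunction of $f$-invariant predicates is $f$-invariant; and if $\rho$ is $f$-invariant then so is $\exists y\,\rho$ --- given tuples belonging to $\exists y\,\rho$, choose witnessing values for $y$, apply $f$ componentwise to the extended tuples, and invoke $f$-invariance of $\rho$ to obtain a witness for the image tuple. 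Intersecting over all $f\in M$ then yields $[\Inv(M)]=\Inv(M)$, so $\Inv(M)\in\mathbb L(R_k)$.

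Part~(ii) is immediate from the definitions: if $A\subseteq B$ then the family $\{\Inv(f):f\in A\}$ is contained in $\{\Inv(f):f\in B\}$, whence $\Inv(B)=\bigcap_{f\in B}\Inv(f)\subseteq\bigcap_{f\in A}\Inv(f)=\Inv(A)$, and symmetrically $S\subseteq T$ gives $\Pol(T)\subseteq\Pol(S)$. For~(iii) I would use Theorem~\ref{bondar1} as a black box: if $M\in\mathbb L(P_k)$ then $M=[M]=\Pol(\Inv(M))$, so $\Pol\circ\Inv=\mathrm{id}$ on $\mathbb L(P_k)$; if $S\in\mathbb L(R_k)$ then $S=[S]=\Inv(\Pol(S))$, so $\Inv\circ\Pol=\mathrm{id}$ on $\mathbb L(R_k)$. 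Having a two-sided inverse, each of $\Pol$ and $\Inv$ is a bijection between $\mathbb L(P_k)$ and $\mathbb L(R_k)$, and the two maps are inverse to one another; injectivity and surjectivity of each are then formal consequences of these two identities.

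The only step that needs a genuine (if short) argument rather than bookkeeping is the closure of $\Inv(M)$ under existential quantification in part~(i); everything else is either a componentwise verification or a direct appeal to Theorem~\ref{bondar1}, which is where the substantive content of the Galois correspondence --- that every clone arises as some $\Pol(S)$ and every closed relation set as some $\Inv(M)$ --- has already been deposited.
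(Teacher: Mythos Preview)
Your argument is correct and is the standard derivation of the Galois connection from the two closure identities. However, note that the paper does not prove this theorem at all: it is stated with the citation \cite{bond,lau} and invoked as a known result from the literature, with no proof given. So there is no ``paper's own proof'' to compare against; your write-up supplies a proof where the paper simply imports one.
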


So we have a one-to-one correspondence (which is called the Galois connection)
between closed sets of predicates of $R_{k}$ and clones in $P_{k}$.

Suppose $S\subseteq R_{k},$ then by $\AND(S)$ we denote the set of all $\rho\in R_{k}$ that can be presented by
a formula of the following form:
$$\rho (x_{1},\ldots,x_{n}) = \rho_{1}(z_{1,1},\ldots,z_{1,n_{1}})\wedge \ldots \wedge
\rho_{s}(z_{s,1},\ldots,z_{s,n_{s}}),$$
where $\rho_{1},\ldots,\rho_{s}\in S,$
$z_{i,j}\in \{x_{1},\ldots,x_{n}\},$ $z_{i,j}\neq z_{i,l}$ for all $i,j,l,j\neq l.$

A predicate $\rho$ of arity $n$ is called \textit{essential} if
there do not exist predicates $\rho_{1},\rho_{2},\ldots,\rho_{l}$ such that
$ar(\rho_{i})<n$
for every $i$ and
$\rho \in \AND(\{\rho_{1},\rho_{2},\ldots,\rho_{l}\}).$
The set of all essential predicates of arity $n$ is denoted by $\widetilde R_{k}^{n}.$
Let $\widetilde R_{k} = \bigcup \limits_{n\ge 0} \widetilde R_{k}^{n}.$
This notion was introduced before by the author in \cite{mydm,mybook,Minimal_Clones}.
Using this notion the
lattice of all clones of self-dual functions in three-valued logic
was completely described \cite{SD_DAN, SD_ISMVL,mydm,mybook},
and for every minimal clone in three-valued logic the cardinality of the set of all clones containing
this minimal clone was found \cite{Minimal_Clones}.

The following theorem is proved in Section 5.

\newcounter{nufcounter}
\setcounter{nufcounter}{\value{theorem}}

\begin{theorem}\label{NUFmain}

Suppose $G\subseteq R_{k},$ then
$$\Pol(G)\cap NUF_{k}^{n+1}\neq \varnothing \Longleftrightarrow
ar([G] \cap \widetilde R_{k}) \le n .$$

\end{theorem}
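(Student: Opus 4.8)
The plan is to establish both implications by translating between the existence of a near-unanimity polymorphism and the arity bound on essential predicates in $[G]$, using the Galois connection (Theorem \ref{bondar2}) together with the classical fact that a clone containing a near-unanimity function of arity $n+1$ is determined by its invariant predicates of arity $\le n$.

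\begin{Proof}
First I would prove the direction $\Longrightarrow$. Suppose $f\in \Pol(G)\cap NUF_{k}^{n+1}$, and let $\rho\in[G]\cap\widetilde R_{k}$. Since $\rho\in[G]=\Inv(\Pol(G))$ (Theorem \ref{bondar1}), $f$ preserves $\rho$. The key classical fact (the Baker--Pixley style argument, see \cite{pixley,jeavons}) is that whenever a relation $\rho$ of arity $m>n$ is preserved by a near-unanimity function of arity $n+1$, then $\rho$ is the conjunction of its projections onto $n$-element subsets of coordinates; equivalently $\rho\in\AND(\{\mathrm{pr}_{I}\rho : |I|=n\})$, and each such projection has arity $n<m$. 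This contradicts $\rho$ being essential. Hence every essential predicate in $[G]$ has arity $\le n$, i.e. $ar([G]\cap\widetilde R_{k})\le n$. I expect this direction to be routine once the Baker--Pixley decomposition is invoked; the only point requiring care is that projections of a relation in $[G]$ stay in $[G]$ (clear, since $[G]$ is closed under existential quantification).

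For the converse $\Longleftarrow$, assume $ar([G]\cap\widetilde R_{k})\le n$. The strategy is to exhibit a near-unanimity function of arity $n+1$ in $\Pol(G)$. Here I would use the reformulation already announced in Section 4: since $[[G]\cap\widetilde R_{k}]=[G]$ (the theorem proved in Section 5), the clone $\Pol(G)=\Pol([G]\cap\widetilde R_{k})$ is the polymorphism clone of a set of relations all of arity $\le n$. A clone that is $\Pol$ of relations of bounded arity $n$ contains a near-unanimity function of arity $n+1$ precisely when it is "$(n+1)$-permutable" in the Baker--Pixley sense; more directly, one forms the relation $\delta\subseteq E_{k}^{n+1}$ consisting of all tuples $(f(a_{1},\dots,a_{n+1}))$ obtainable, and checks that the free structure on the $(n+1)$ generators relevant to the near-unanimity identities, restricted to $\Pol(G)$-invariant relations of arity $\le n$, forces the near-unanimity term to exist. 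Concretely: the near-unanimity identities define, over the relational clone generated by relations of arity $\le n$, a consistent instance, because any obstruction would be witnessed by an essential relation of arity $>n$ in $[G]$, contradicting the hypothesis.

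The hard part will be the converse: making precise the claim that "the absence of a near-unanimity function of arity $n+1$ in $\Pol(G)$ is witnessed by an essential predicate of arity $>n$ in $[G]$." I would handle this by the standard indicator/free-structure construction. Let $\Theta$ be the $(n+1)$-generated relational structure whose relations are all relations from $[G]$ of arity $\le n$, pulled back along the $n(n+1)$ tuples $e^{(i)}_{j}\in E_{k}^{?}$ encoding the arguments of the near-unanimity identities (for each pair of positions, the tuple that is $x$ in one coordinate and $y$ elsewhere). A near-unanimity polymorphism of arity $n+1$ in $\Pol(G)$ is exactly a homomorphism from this free structure that realizes the diagonal; its non-existence yields, via the Galois connection, a relation in $[G]$ that is not the conjunction of its $\le n$-ary projections, hence (after discarding redundant coordinates and splitting off lower-arity conjuncts) an essential predicate of arity $>n$ in $[G]$. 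Verifying that this extracted predicate is genuinely essential — and genuinely in $[G]$, not merely in some larger relational clone — is the delicate bookkeeping step, and it is presumably why the full argument is deferred to Section 5.
\end{Proof}
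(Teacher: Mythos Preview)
Your forward direction is essentially the paper's argument. The paper does not invoke the full Baker--Pixley decomposition but instead works directly with an essential tuple $(a_1,\dots,a_m)$ for $\rho$: applying the near-unanimity function $h$ to the $n+1$ columns $(b_1,a_2,\dots,a_m)$, $(a_1,b_2,a_3,\dots,a_m)$, $\dots$ forces $(a_1,\dots,a_m)\in\rho$, a contradiction. This is of course the same computation that underlies Baker--Pixley.

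For the converse, your outline is correct in spirit but takes an unnecessary detour and leaves the key extraction step unspecified. The paper does \emph{not} pass through Lemma~\ref{SuzhZamkn} ($[[G]\cap\widetilde R_k]=[G]$) at all. Instead it builds the relation $\rho_{M,G}\in[G]$ directly (Lemma~\ref{trivialG}), where $M$ is the set of the $n+1$ columns of a fixed $(n+1)k^2\times(n+1)$ matrix listing every instance of the near-unanimity identities. If the ``diagonal'' tuple $\gamma_1\cdots\gamma_{n+1}$ lies in $\rho_{M,G}$, the witnessing function is the desired near-unanimity polymorphism. If not, the columns $\alpha_i\in\rho_{M,G}$ together with the missing $\gamma_1\cdots\gamma_{n+1}$ satisfy exactly the hypotheses of Lemma~\ref{arnostSush}, which is a short induction producing an essential predicate of arity $\ge n+1$ in $[\{\rho_{M,G}\}]\subseteq[G]$. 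That lemma is precisely the ``delicate bookkeeping'' you flagged; once you have it, there is no need for the free-structure language or for the identity $[[G]\cap\widetilde R_k]=[G]$.
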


Note that similar theorem but without the notion of an essential predicate
was already proved in \cite{pixley,jeavons}.

\begin{corollary}\label{NUFmainCo}

Suppose $G\subseteq R_{k},$ then
$$\Pol(G)\cap NUF_{k}\neq \varnothing \Longleftrightarrow
|[G] \cap \widetilde R_{k}|<\infty.$$

\end{corollary}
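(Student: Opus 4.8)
The plan is to read Corollary~\ref{NUFmainCo} off Theorem~\ref{NUFmain} almost immediately; the only extra ingredient is the elementary fact that over the fixed finite domain $E_{k}$ there are exactly $2^{k^{m}}$ predicates of arity $m$, so $R_{k}^{m}$ is finite for each $m$. Consequently, for a subset of $R_{k}$ being finite is the same as having bounded arity, and this is the one place where finiteness of $E_{k}$ really enters.

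For the implication ($\Leftarrow$) I would argue as follows. Suppose $[G]\cap \widetilde R_{k}$ is finite. It is nonempty (for instance $false$ is an essential predicate of arity $0$ belonging to $[G]$), so it has a well-defined maximal arity; call it $n$, and note we may harmlessly assume $n\ge 2$, since enlarging $n$ only weakens the inequality below. Then $ar([G]\cap \widetilde R_{k})\le n$, and Theorem~\ref{NUFmain} yields a function in $\Pol(G)\cap NUF_{k}^{n+1}$; since $n+1\ge 3$ this function lies in $NUF_{k}$, whence $\Pol(G)\cap NUF_{k}\neq \varnothing$.

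For the implication ($\Rightarrow$), suppose $\Pol(G)\cap NUF_{k}\neq \varnothing$. By definition of $NUF_{k}$ there is some $m\ge 3$ with $\Pol(G)\cap NUF_{k}^{m}\neq \varnothing$, so Theorem~\ref{NUFmain}, applied with $n=m-1$, gives $ar([G]\cap \widetilde R_{k})\le m-1$. Hence $[G]\cap \widetilde R_{k}\subseteq \bigcup_{j=0}^{m-1} R_{k}^{j}$, which is a finite set, and therefore $[G]\cap \widetilde R_{k}$ is finite.

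I do not anticipate a genuine obstacle here: essentially all the work sits inside Theorem~\ref{NUFmain}, whose proof is deferred to Section~5. The only points that need minor care are the passage between ``finite'' and ``bounded arity'' just mentioned, and the small-arity bookkeeping needed to ensure that the index $n+1$ appearing in Theorem~\ref{NUFmain} is at least $3$, so that a bona fide near-unanimity function is actually produced.
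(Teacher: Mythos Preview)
Your argument is correct and is exactly the intended one: the paper states the result as an immediate corollary of Theorem~\ref{NUFmain} without giving a separate proof, and your write-up simply spells out the obvious passage between ``finite'' and ``bounded arity'' over the finite domain $E_{k}$ together with the trivial bookkeeping on $n+1\ge 3$.
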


Hence, \textbf{NUF-Problem} is equivalent to the following problem:
given a finite set $G\subseteq R_{k}$, decide whether
the set $[G] \cap \widetilde R_{k}$ is finite.

In our opinion this reformulation of \textbf{NUF-Problem} is even more natural.

By $\rho_{=,a}$ we denote the predicate of arity one defined by the following condition
$$\rho_{=,a}(x) = 1 \Longleftrightarrow x = a.$$

Let $SR_{k}$ be the set of all such predicates in $R_{k}.$
Note that if
$$\sigma(x_{1},\ldots,x_{i-1},x_{i+1},\ldots,x_{n}) =
\rho(x_{1},\ldots,x_{i-1},c, x_{i+1},\ldots,x_{n}),$$
where $c \in E_{k},$
then $\sigma \in [\{\rho\}\cup SR_{k}].$

Every near-unanimity function preserves every predicate form $SR_{k}.$
Hence, by Theorem \ref{NUFmain} we have
\begin{multline*}
ar([G] \cap \widetilde R_{k})\le n \Longleftrightarrow
\Pol(G)\cap NUF_{k}^{n+1}\neq \varnothing \Longleftrightarrow \\
\Longleftrightarrow
\Pol(G\cup SR_{k})\cap NUF_{k}^{n+1}\neq \varnothing \Longleftrightarrow
ar([G\cup SR_{k}] \cap \widetilde R_{k})\le n,
\end{multline*}
$$|[G] \cap \widetilde R_{k}|<\infty \Longleftrightarrow
|[G\cup SR_{k}] \cap \widetilde R_{k}|<\infty.$$

\newcounter{dveteoremy}
\setcounter{dveteoremy}{\value{theorem}}

The following two theorems are proved in Section 8.

\begin{theorem}\label{posledovatelnostPredikatov}

Suppose $SR_{k}\subseteq G,$
$|[G]\cap \widetilde R_{k}|<\infty,$
$ar([G]\cap \widetilde R_{k})=p,$
$ar(G)=q,$
then there exist $\rho \in [G]\cap \widetilde R_{k},$
$\rho_{1},\rho_{2},\ldots,\rho_{n}\in [G]$
such that
\begin{multline*}\rho(x_{1},\ldots,x_{n}) = \exists y_{1}\exists y_{2}\ldots\exists y_{n-1}\;
\rho_{1}(x_{1},y_{1})\wedge
\rho_{2}(y_{1},x_{2},y_{2})\wedge \\ \wedge
\rho_{3}(y_{2},x_{3},y_{3})\wedge \ldots \wedge
\rho_{n-1}(y_{n-2},x_{n-1},y_{n-1})\wedge
\rho_{n}(y_{n-1},x_{n})
\end{multline*}
and
$ar(\rho)>\log_{k\cdot q}(p).$

\end{theorem}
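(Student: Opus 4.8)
The plan is to assemble the structural results of the preceding sections. Since $[G]\cap\widetilde R_{k}$ is finite, it contains a predicate maximal with respect to the natural order on predicates (where $\sigma\preceq\rho$ means that $\sigma$ is obtained from $\rho$ by identifying and permuting variables); any essential predicate of the maximal arity $p$ is automatically such a maximal predicate, since a strict extension would have to be an essential predicate of arity $\ge p$, hence of arity $p$, hence equal to it. Fix such a predicate $\rho^{*}$ with $ar(\rho^{*})=p$. By the last result of Section~7 (tree-realizability of a maximal essential predicate in a finite $[G]\cap\widetilde R_{k}$), $\rho^{*}$ is realized by a tree-formula $\Phi$ over $G$; in particular every predicate occurring at a node of $\Phi$ has arity at most $q$.

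Next I would extract a long path from $\Phi$. Viewing $\Phi$ as a tree whose nodes carry predicates of arity $\le q$ over $E_{k}$, a counting argument bounds the arity of the predicate a tree-formula of depth $d$ can realize by roughly $(k\cdot q)^{d}$ (bounded branching $q$ at each node, $k$ possible values on each link); since $ar(\rho^{*})=p$, the tree $\Phi$ must contain a path of length exceeding $\log_{k\cdot q}(p)$. Restricting $\Phi$ to such a path — existentially quantifying every variable not lying on the path and keeping exactly one free variable $x_{i}$ at the $i$-th node — turns the $i$-th node into a predicate $\rho_{i}\in[G]$ depending only on the link variables $y_{i-1},y_{i}$ to its neighbours and on $x_{i}$. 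This is precisely a chain-formula of the displayed shape, and the arity of the predicate it realizes equals the length of the path, hence exceeds $\log_{k\cdot q}(p)$.

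It remains to guarantee that the predicate $\rho$ realized by this chain is essential. If it is not, then $\rho\in\AND(\{\sigma_{1},\ldots,\sigma_{l}\})$ with $ar(\sigma_{j})<ar(\rho)$, each $\sigma_{j}$ being realized by the chain-formula obtained by retaining only its own free variables; one passes to a component of largest arity and applies the transformation of Section~7, which removes identifications of variables while keeping the chain shape and producing an essential predicate, using the equation of Section~6. The main obstacle — and the reason the full machinery of Sections~5--7 is needed here — is exactly this last step: one must control how much arity is lost when passing to an essential component and re-applying the identification-removing transformation, and show that what remains is still a chain-formula realizing an essential predicate of arity above $\log_{k\cdot q}(p)$. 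This is where the monotone dependence of the size of the chain-formula on the size of the tree-formula, together with the bounds from Section~5 on how essential predicates of bounded arity can be combined, does the work; the actual lengthening of the chain is then deferred to the following theorem, so here it suffices to produce a single chain-formula clearing the bound $\log_{k\cdot q}(p)$.
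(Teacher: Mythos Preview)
Your outline follows the paper's arc --- start from a tree-formula realizing a predicate in $MAX(G)$ (Theorem in Section~7), then extract a long path and collapse it to a chain --- but the execution diverges at the two places that actually carry the weight.

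\textbf{The branching bound.} Your sentence ``bounded branching $q$ at each node, $k$ possible values on each link'' is not the right mechanism. In a tree-formula a bound variable $z$ may occur in arbitrarily many occurrences $\Psi_{1},\dots,\Psi_{r}$; nothing about $E_{k}$ having $k$ elements limits $r$. What the paper uses is Lemma~\ref{nebolshek}: because the predicate realized is \emph{essential}, at most $k$ of the branches emanating from $z$ can carry any unbound variables at all. Combined with $|Var(\Psi_{j})\setminus\{z\}|\le q-1$, this gives the greedy step: some child $z'$ of $z$ has $KeySet$ of size at least $(|UVar(\Xi_{i})|-1)/(k\cdot q)$. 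Without invoking this lemma (or an equivalent), you have no bound on the depth of the tree in terms of $p$.

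\textbf{Essentiality of $\rho$.} This is the real gap. You quantify away everything off the path and then try to repair non-essentiality after the fact, proposing to pass to an $\AND$-component and reapply the Section~7 transformation. That transformation is designed to remove identifications from formulas realizing a predicate in $MAX(G)$; it does not preserve chain shape, and passing to a component can drop the arity below $\log_{kq}(p)$. You yourself flag this as ``the main obstacle'' and do not resolve it.

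The paper avoids this entirely, and this is where the hypothesis $SR_{k}\subseteq G$ is used (you never use it). Fix once and for all an essential tuple $(a_{1},\dots,a_{p})$ for $\sigma_{1}\in MAX(G)$. During the greedy walk, whenever you step from $z_{i}$ to the chosen child $z$, you do \emph{not} simply existentially quantify the free variables lying in the discarded branches: you freeze each such variable $\zeta$ to its value $a_{\zeta}$ using the unary predicate $(\zeta=\phi(\zeta))\in SR_{k}\subseteq G$, while keeping \emph{one} of those free variables, call it $\tau$, as the new chain coordinate $x_{i}$. The resulting $\rho_{i}(z_{i-1},\tau,z)$ lies in $[G]$, and by construction the tuple $(\phi(x_{1}),\dots,\phi(x_{n}))$ is an essential tuple for the chain predicate $\rho$: on the chosen coordinates we are still looking at the original essential tuple of $\sigma_{1}$, so $\rho$ vanishes there but becomes $1$ when any single $x_{j}$ is replaced by the corresponding $b_{j}$. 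Thus $\rho$ is essential straight out of the construction, with no post-processing.

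In short: the path-extraction and the essentiality are produced \emph{simultaneously} by a greedy walk that uses Lemma~\ref{nebolshek} for the $k$ factor and the $SR_{k}$ hypothesis to pin the discarded coordinates to an essential tuple. Your two-stage plan (first extract a path, then fix essentiality) does not go through as stated.
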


\begin{theorem}\label{beskonechnayaPosledovatelnost}

Suppose $\rho \in \widetilde R_{k},$ $G\subseteq R_{k},$ $\rho_{1},\rho_{2},\ldots,\rho_{n}\in [G]$
\begin{multline*}\rho(x_{1},\ldots,x_{n}) = \exists y_{1}\exists y_{2}\ldots\exists y_{n-1}\;
\rho_{1}(x_{1},y_{1})\wedge
\rho_{2}(y_{1},x_{2},y_{2})\wedge \\ \wedge
\rho_{3}(y_{3},x_{3},y_{3})\wedge \ldots \wedge
\rho_{n-1}(y_{n-2},x_{n-1},y_{n-1})\wedge
\rho_{n}(y_{n-1},x_{n})
\end{multline*}
where $n>2^{2k^{2}}+2.$
Then $|[G] \cap \widetilde R_{k}| = \infty.$

\end{theorem}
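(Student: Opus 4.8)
The plan is to view the given chain formula as a walk along its ``joint'' variables $y_1,\dots,y_{n-1}$, to find two joints that carry the same local data, and to \emph{pump} the block of atoms lying between them; this produces a strictly longer chain formula that again realizes an essential predicate, and iterating gives essential predicates of unbounded arity inside $[G]$, so that $[G]\cap\widetilde R_k$ is infinite.

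First I would fix a witness to the essentiality of $\rho$. Since $\rho\in\widetilde R_k$, by the standard characterization of essential predicates through $\AND$ there is a tuple $\gamma=(\gamma_1,\dots,\gamma_n)\notin\rho$ such that for every coordinate $m$ some tuple $(\gamma_1,\dots,\gamma_{m-1},b_m,\gamma_{m+1},\dots,\gamma_n)$ lies in $\rho$. Substituting $\gamma_i$ for the pendant variable $x_i$ in each atom turns every middle atom $\rho_i$ into a binary relation on $E_k$ acting between $y_{i-1}$ and $y_i$, and turns $\rho_1,\rho_n$ into a ``start constraint'' on $y_1$ and an ``end constraint'' on $y_{n-1}$. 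To each joint $y_j$ I attach the pair $(\Phi_j,\Psi_j)$, where $\Phi_j\subseteq E_k\times E_k$ is the composite of the binary relations coming from $\rho_2,\dots,\rho_j$ and $\Psi_j\subseteq E_k\times E_k$ is the composite of those coming from $\rho_{j+1},\dots,\rho_{n-1}$. There are at most $2^{k^2}\cdot 2^{k^2}=2^{2k^2}$ possible pairs, so, as $n>2^{2k^2}+2$, a pigeonhole argument gives indices $1\le j<j'\le n-1$ with $(\Phi_j,\Psi_j)=(\Phi_{j'},\Psi_{j'})$.

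Next I would pump. Let $\rho'$ be obtained from the chain formula by inserting, right after $y_{j'}$, a fresh copy of the block of atoms $\rho_{j+1},\dots,\rho_{j'}$ together with fresh pendant variables and fresh intermediate joints, so that the new copy reads $y_{j'}$ as its input and produces a new joint $y'_{j'}$ that feeds the suffix. Then $\rho'$ is again a chain formula of the same shape (first and last atoms binary, the rest ternary), it has arity $n+(j'-j)>n$, and $\rho'\in[[G]]=[G]$ because $\rho_1,\dots,\rho_n\in[G]$. The core of the argument is to show $\rho'\in\widetilde R_k$, for which I would use the tuple $\gamma'$ obtained from $\gamma$ by copying the values $\gamma_{j+1},\dots,\gamma_{j'}$ onto the fresh pendant coordinates. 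Since $(\Phi_j,\Psi_j)=(\Phi_{j'},\Psi_{j'})$, the composite $\Sigma$ of the inserted block's binary relations acts as the identity on the relevant reachable set coming from the start side and on the relevant co-reachable set coming from the end side; consequently, along $\rho'$ the ``reachable from the start'' and ``co-reachable from the end'' subsets of $E_k$ agree at every joint with the corresponding subsets along $\rho$ (at the matching joint). From this I would read off: (i) $\gamma'\notin\rho'$, since these two families of subsets remain disjoint at every joint exactly as they did for $\gamma$ and $\rho$; and (ii) for every coordinate $m$ of $\rho'$, the tuple obtained from $\gamma'$ by putting $b_{\iota(m)}$ in place of $\gamma'_m$ lies in $\rho'$, where $\iota(m)$ is the coordinate of $\rho$ that $m$ copies (with $\iota(m)=m$ outside the pumped block) — because the reachable set immediately to the left of the atom holding $x_m$ and the co-reachable set immediately to its right coincide with the corresponding sets for the atom holding $x_{\iota(m)}$ in $\rho$, and $b_{\iota(m)}$ connects those sets there. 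Item (ii) would be checked separately according to whether $m$ lies in the prefix, in the original copy of the block, in the inserted copy, or in the suffix; in each case it reduces to the identities satisfied by $\Sigma$.

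Finally, the chain formula for $\rho'$ is longer than the one for $\rho$, so it still satisfies the hypothesis $n>2^{2k^2}+2$; repeating the construction yields a sequence $\rho^{(0)}=\rho,\rho^{(1)},\rho^{(2)},\dots$ in $[G]\cap\widetilde R_k$ of strictly increasing arity, and since predicates of distinct arities are distinct this forces $|[G]\cap\widetilde R_k|=\infty$. I expect the main obstacle to be item (ii): keeping the coordinate bookkeeping and the reachability computations consistent across the four zones of the pumped formula, and making sure the chosen joint data $(\Phi_j,\Psi_j)$ is coarse enough to fit in a set of size $2^{2k^2}$ yet fine enough that $\gamma'$ genuinely witnesses the essentiality of $\rho'$.
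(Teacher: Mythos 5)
Your proposal is correct and follows the same high-level strategy as the paper: plug the essential tuple into the pendant coordinates, view each middle atom as a binary relation on $E_k$ between consecutive joints, pigeonhole a repeated "state" among the $n-1$ joints, and pump the block between the two matching joints to obtain a strictly longer chain formula realizing an essential predicate, contradicting finiteness. The pumping construction of $\rho'$ is the same as the paper's; where you genuinely diverge is in the invariant attached to each joint. The paper pairs joint $m$ with $(D_m,F_m)$, where $D_m$ is the \emph{forward} composite of $\rho_2,\dots,\rho_m$ with the $a$-values and $F_m$ is the \emph{forward} composite allowing exactly one swap of an $a_i$ to $b_i$; the collision $D_p=D_q$, $F_p=F_q$ drives its essentiality check. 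You instead pair joint $j$ with $(\Phi_j,\Psi_j)$, the \emph{forward} and \emph{backward} composites both with $a$-values, and the collision $\Phi_j=\Phi_{j'}$, $\Psi_j=\Psi_{j'}$ gives you $R_j=R_{j'}$ (reachable sets from the start) and $L_j=L_{j'}$ (co-reachable sets to the end), together with the fact that the inserted block's composite $H$ fixes $R_j$ setwise. Both invariants live in a set of size $2^{2k^2}$, so the arity threshold is the same; your version handles the four zones of the one-swap verification (prefix, first copy of the block, inserted copy, suffix) rather transparently via the two set-equalities $R_j=R_{j'}$ and $L_j=L_{j'}$, whereas the paper leaves that verification to the reader. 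In short: identical pumping lemma, same bound, but a different and arguably more symmetric choice of joint data, and both are valid.
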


Let us prove the main theorem from the previous section.
\begin{Proof} [ of Theorem \ref{maintheorem}]

Assume that
$$NUF_{k}\cap \Pol(G)\neq \varnothing,\;\;
NUF_{k}^{n}\cap \Pol(G)= \varnothing,$$
where $n = (k\cdot ar(G))^{2^{2k^{2}}+2}.$
Hence
$$NUF_{k}\cap \Pol(G\cup SR_{k})\neq \varnothing,\;\;
NUF_{k}^{n}\cap \Pol(G\cup SR_{k})= \varnothing.$$
By Theorem \ref{NUFmain},
we have
$
n \le ar([G\cup SR_{k}]\cap \widetilde R_{k})<\infty.$
Let $ar([G\cup SR_{k}]\cap \widetilde R_{k})=p,$
$ar(G\cup SR_{k})=q.$
By Theorem~\ref{posledovatelnostPredikatov},
there exist $\rho \in [G\cup SR_{k}]\cap \widetilde R_{k},$
$\rho_{1},\rho_{2},\ldots,\rho_{n}\in [G\cup SR_{k}]$
such that
\begin{multline*}\rho(x_{1},\ldots,x_{n}) = \exists y_{1}\exists y_{2}\ldots\exists y_{n-1}\;
\rho_{1}(x_{1},y_{1})\wedge
\rho_{2}(y_{1},x_{2},y_{2})\wedge \\ \wedge
\rho_{3}(y_{2},x_{3},y_{3})\wedge \ldots \wedge
\rho_{n-1}(y_{n-2},x_{n-1},y_{n-1})\wedge
\rho_{n}(y_{n-1},x_{n})
\end{multline*}
and
$ar(\rho)>\log_{k\cdot q}(p).$
Hence
$$ar(\rho)>\log_{k\cdot q}(p)\ge \log_{k\cdot q}(n)=
\log_{k\cdot q}((k\cdot q)^{2^{2k^{2}}+2}) = 2^{2k^{2}}+2.$$

By Theorem~\ref{beskonechnayaPosledovatelnost},
we have $|[G\cup SR_{k}] \cap \widetilde R_{k}| = \infty.$
This contradiction concludes the proof.
%
%

\end{Proof} 
\section{Essential predicates}%

Note that some statements from this section were already proved in \cite{mydm, mybook,Minimal_Clones}.

In some cases we use the notation $\rho(\ldots).$
This means that $\rho$ depends on some variables but the exact list of variables is omitted.
Usually this list is not important or can be found from the context.

We say that $i$-th variable of a predicate $\rho\in R_{k}^{n}$ is \emph{essential} if
there exist $a_{1},a_{2},\ldots,a_{n},b\in E_{k}$
such that
$$\rho(a_{1},a_{2},\ldots,a_{i-1},a_{i},a_{i+1},\ldots, a_{n}) \neq
\rho(a_{1},a_{2},\ldots,a_{i-1},b,a_{i+1},\ldots, a_{n}).$$


Suppose $\rho \in R_{k},$ then by $\Strike(\rho)$ we denote the set of all $\rho'$ that
can be presented by
a formula of the following form:
$$\rho' (x_{1},\ldots,x_{n}) = \exists y_{1} \exists y_{2}\ldots  \exists y_{l} \; \rho(z_{1},\ldots,z_{m})$$
where $l\ge 0,$ $z_{1},z_{2},\ldots,z_{m} \in \{x_{1},\ldots,x_{n},y_{1},\ldots,y_{l}\},$
$z_{i}\neq z_{j}$ if $i\neq j.$


Suppose $\rho_{1},\rho_{2}\in R_{k}^{n};$ we say that $\rho_{1}\le \rho_{2}$
if $\rho_{1}(a_{1},\ldots,a_{n})\le \rho_{2}(a_{1},\ldots,a_{n})$ for every
$a_{1},\ldots,a_{n}\in E_{k};$
we say that $\rho_{1}<\rho_{2}$ if $\rho_{1}\le \rho_{2}$ and $\rho_{1}\neq \rho_{2}.$

A tuple $(a_{1},a_{2},\ldots,a_{n})$ is called \textit{essential for a predicate $\rho\in R_{k}^{n}$}
if $$\rho(a_{1},a_{2},\ldots,a_{n})=0$$ and
there exist $b_{1},b_{2},\ldots,b_{n}\in E_{k}$ such that for every $i\in \{1,2,\ldots,n\}$
$$\rho(a_{1},\ldots,a_{i-1},b_{i},a_{i+1},\ldots,a_{n})=1.$$

Let us  define the predicate $\widetilde \rho$
for every $\rho\in R_{k}^{n},$  where $n\ge 1.$
Let 
$$\sigma_{i}(x_{1},\ldots,x_{i-1},x_{i+1},\ldots,x_{n}) = \exists x_{i}\; \rho(x_{1},\ldots,x_{n}).$$
By $\widetilde \rho$ we denote the following predicate:
$$\widetilde \rho(x_{1},\ldots,x_{n}) =
\sigma_{1}(x_{2},\ldots,x_{n})
\wedge\sigma_{2}(x_{1},x_{3},\ldots,x_{n})
\wedge \ldots\wedge \sigma_{n}(x_{1},\ldots,x_{n-1}).$$




\begin{lemma}\label{sushnabor}

Suppose $\rho \in R_{k}^{n},$ where $n\ge 1.$ Then the following conditions are equivalent:

1) $\rho$ is an essential predicate;

2) $\rho \neq \widetilde \rho;$

3) there exists an essential tuple for $\rho$.

\end{lemma}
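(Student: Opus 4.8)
The plan is to prove the equivalences in the cycle $1)\Rightarrow 2)\Rightarrow 3)\Rightarrow 1)$, unwinding the definitions of $\widetilde\rho$ and of an essential tuple. First observe that by construction $\rho\le\widetilde\rho$ always holds: if $\rho(a_1,\dots,a_n)=1$ then each $\sigma_i(a_1,\dots,a_{i-1},a_{i+1},\dots,a_n)=1$ (witnessed by $x_i=a_i$), so $\widetilde\rho(a_1,\dots,a_n)=1$. Thus $\rho\neq\widetilde\rho$ is equivalent to $\rho<\widetilde\rho$, i.e.\ to the existence of a tuple $(a_1,\dots,a_n)$ with $\rho(a_1,\dots,a_n)=0$ but $\widetilde\rho(a_1,\dots,a_n)=1$. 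Unpacking the latter: $\widetilde\rho(a_1,\dots,a_n)=1$ means that for each $i$ there is $b_i\in E_k$ with $\rho(a_1,\dots,a_{i-1},b_i,a_{i+1},\dots,a_n)=1$. Hence $2)$ says exactly that there is a tuple that is $0$ in $\rho$ but can be ``repaired'' coordinate by coordinate — which is precisely the definition of an essential tuple. This gives $2)\Leftrightarrow 3)$ directly.

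For $2)\Rightarrow 1)$, I would argue contrapositively: suppose $\rho$ is not essential, so $\rho\in\AND(\{\rho_1,\dots,\rho_l\})$ with each $ar(\rho_j)<n$. Write $\rho(x_1,\dots,x_n)=\rho_1(\dots)\wedge\dots\wedge\rho_l(\dots)$ where each $\rho_j$ omits at least one variable $x_{i(j)}$. The key point is that each conjunct $\rho_j$, viewed as a predicate not depending on $x_{i(j)}$, satisfies $\rho_j\le\sigma_{i(j)}$ in the appropriate sense (projecting out $x_{i(j)}$ from $\rho$ can only enlarge the set of solutions, and $\rho_j$ is already among the constraints so projecting $\rho$ gives something $\ge\rho_j$... more carefully: since $\rho\le\rho_j$ and $\rho_j$ ignores $x_{i(j)}$, we get $\sigma_{i(j)}\le\rho_j$, hence $\bigwedge_j\sigma_{i(j)}\le\bigwedge_j\rho_j=\rho$). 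Combining with $\widetilde\rho\le\bigwedge_j\sigma_{i(j)}$ (each $\sigma_m$ appearing in $\widetilde\rho$ is one of the factors, and we only need the subset indexed by the $i(j)$'s, dropping factors only enlarges) we obtain $\widetilde\rho\le\rho$, so together with $\rho\le\widetilde\rho$ we conclude $\rho=\widetilde\rho$, i.e.\ $\neg 2)$.

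For $1)\Rightarrow 2)$, again contrapositively: if $\rho=\widetilde\rho$, then $\rho$ is literally presented by the formula defining $\widetilde\rho$, namely $\rho(x_1,\dots,x_n)=\bigwedge_{i=1}^n\sigma_i(x_1,\dots,x_{i-1},x_{i+1},\dots,x_n)$, which is a conjunction of predicates $\sigma_i$ each of arity $n-1<n$ on distinct variables; hence $\rho\in\AND(\{\sigma_1,\dots,\sigma_n\})$ and $\rho$ is not essential. (If $n=1$ one checks the degenerate case separately: $\sigma_1$ has arity $0$, and $\rho=\widetilde\rho$ forces $\rho$ to be nonessential by the convention on arity-$0$ predicates, while the existence of an essential tuple for a unary $\rho$ just means $\rho$ is neither empty nor full.)

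The main obstacle I expect is bookkeeping in the step $2)\Rightarrow 1)$: one must be careful that when $\rho$ is a conjunction $\bigwedge_j\rho_j(z_{j,1},\dots,z_{j,n_j})$ with repeated or permuted variables across conjuncts, the inequality $\sigma_{i(j)}\le\rho_j$ is stated on the correct set of variables, and that selecting one ``witness'' omitted variable $i(j)$ per conjunct and then comparing $\bigwedge_j\sigma_{i(j)}$ to the full $\widetilde\rho=\bigwedge_{i=1}^n\sigma_i$ goes in the right direction (more conjuncts means a smaller predicate, so $\widetilde\rho\le\bigwedge_{j}\sigma_{i(j)}$, which is what we want). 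Everything else is a routine unfolding of definitions.
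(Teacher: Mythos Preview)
Your proposal is correct and follows essentially the same strategy as the paper: establish $\rho\le\widetilde\rho$, identify $2)\Leftrightarrow 3)$ by unpacking the definition of $\widetilde\rho$, and handle the link to $1)$ via the decomposition $\rho=\bigwedge_j\rho_j$ with each $\rho_j$ missing a variable.

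The only organizational difference is in closing the cycle. The paper proves $3)\Rightarrow 1)$ pointwise: given an essential tuple $(a_1,\dots,a_n)$, if $\rho$ were a conjunction of lower-arity predicates, one conjunct $\rho_j$ would vanish at this tuple while having a dummy coordinate $i$, contradicting the existence of the witness $b_i$. You instead prove the contrapositive $\neg 1)\Rightarrow\neg 2)$ globally, via the chain of inequalities $\widetilde\rho\le\bigwedge_j\sigma_{i(j)}\le\bigwedge_j\rho_j=\rho$, using $\sigma_{i(j)}\le\rho_j$ (which is exactly the ``dummy coordinate'' observation rephrased as an inequality of predicates). This is a cosmetic repackaging of the same idea; your version is arguably a bit cleaner since it avoids fixing a particular tuple, while the paper's version makes the role of the essential tuple more visible. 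Your handling of the $n=1$ edge case is also fine and matches the paper's conventions on arity-$0$ predicates.
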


\begin{proof}

Let $\sigma_{i}(x_{1},\ldots,x_{i-1},x_{i+1},\ldots,x_{n}) = \exists x_{i}\; \rho(x_{1},\ldots,x_{n}).$
Then $$\widetilde \rho (x_{1},\ldots,x_{n}) = \sigma_{1}(x_{2},\ldots,x_{n})\wedge  \sigma_{2}(x_{1},x_{3},\ldots,x_{n})\wedge
\ldots\wedge \sigma_{n}(x_{1},\ldots,x_{n-1}).$$
Let us prove that the first condition implies the second condition,
the second implies the third and the third implies the first.

Suppose $\rho$ is essential, then it follows from the definition that
$\rho \neq \widetilde\rho.$

Suppose  $\rho \neq \widetilde\rho.$ It can be easily checked that $\rho \le \widetilde \rho.$
Then there exists $(a_{1},\ldots,a_{n})$
such that $\widetilde \rho(a_{1},\ldots,a_{n})=1,$
$\rho(a_{1},\ldots,a_{n})=0.$
By definition of the predicates
$\sigma_{1},\ldots,\sigma_{n},$
for every $i$ there exists
$b_{i}\in E_{k}$
such that
$$\rho(a_{1},\ldots,a_{i-1},b_{i},a_{i+1},\ldots,a_{n}) = 1.$$
Hence the tuple $(a_{1},\ldots,a_{n})$ is an essential tuple for $\rho.$

Suppose $(a_{1},\ldots,a_{n})$ is an essential tuple for $\rho.$
Assume that $\rho$ is not essential. Then there exist $\rho_{1},\ldots, \rho_{l}\in R_{k}$
such that
$$\rho(x_{1},\ldots,x_{n}) = \rho_{1}(\ldots)\wedge \ldots\wedge \rho_{l}(\ldots)$$
and
$ar(\rho_{i})<n$
for every $i.$
Without loss of generality it can be assumed that
every predicate $\rho_{i}$ depends on all variables $x_{1},\ldots,x_{n}$,
but at least one of these variables is not essential in $\rho_{i}.$
Since $\rho(a_{1},\ldots,a_{n})=0,$
there exists $j$ and $i$ such that $\rho_{j}(a_{1},\ldots,a_{n}) =0$
and the $i$-th variable of $\rho_{j}$ is not essential.
Hence there is no $b_{i}$ such that
$\rho_{j}(a_{1},\ldots,a_{i-1},b_{i},a_{i+1},\ldots,a_{n}) =1.$
Therefore $(a_{1},a_{2},\ldots,a_{n})$ is not an essential tuple.

\end{proof}

\begin{lemma}\label{razlojenieNesushestvennogo}


Suppose $\rho\in R_{k},$   then
$\rho \in \AND(\Strike(\rho)\cap \widetilde R_{k}).$


\end{lemma}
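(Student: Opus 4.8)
The plan is to decompose $\rho$ into its ``essential pieces'' by repeatedly splitting off lower-arity conjuncts, and then to argue that the pieces that remain must themselves be essential. First I would set up induction on the arity $n$ of $\rho$. The base cases ($n=0,1$, or when $\rho$ is already essential) are immediate: if $\rho$ is essential, then $\rho\in\Strike(\rho)\cap\widetilde R_k$ (take $l=0$, $m=n$, $z_i=x_i$), so $\rho\in\AND(\Strike(\rho)\cap\widetilde R_k)$ trivially. For the inductive step, assume $\rho\in R_k^n$ is \emph{not} essential. By the definition of an essential predicate there exist $\rho_1,\dots,\rho_l$ with $ar(\rho_i)<n$ for every $i$ and $\rho\in\AND(\{\rho_1,\dots,\rho_l\})$; that is, $\rho(x_1,\dots,x_n)=\rho_1(\ldots)\wedge\cdots\wedge\rho_l(\ldots)$ where each $\rho_i$ is applied to a repetition-free tuple of the variables $x_1,\dots,x_n$.

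The key observation is that each conjunct $\rho_i$, viewed as a predicate in the variables it actually uses, can be replaced (using Lemma~\ref{sushnabor}, condition 2, applied iteratively) by a conjunction of essential predicates of no larger arity: if $\rho_i$ is not essential, then $\rho_i=\widetilde{\rho_i}$, and $\widetilde{\rho_i}$ is by its very definition a conjunction of the projections $\sigma_j$ of $\rho_i$, each of strictly smaller arity; recursing on arity, $\rho_i\in\AND(\text{essential predicates of arity}<ar(\rho_i))$, and each of these essential predicates, together with the substitution of variables of $x_1,\dots,x_n$ into it, is an existential-free projection composed with the substitution of $\rho_i$ into $\rho$, hence lies in $\Strike(\rho)$. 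Here I would need to check two routine points: (i) the predicates $\sigma_j$ obtained from $\rho_i$ by projecting away one variable are indeed of the form in the definition of $\Strike(\rho)$ when composed with the embedding of $\rho_i$'s variables into $\{x_1,\dots,x_n\}$ — this is where repetition-freeness of the $z$'s in the definition of $\AND$ matters, since $\Strike$ also requires $z_i\neq z_j$; and (ii) the conjunction one obtains after fully decomposing all the $\rho_i$ is still an $\AND$ (repetition-free in each conjunct), which it is, since each original conjunct used distinct variables and decomposing it only removes variables. Putting the pieces together yields $\rho\in\AND(\mathcal{E})$ where $\mathcal{E}\subseteq\Strike(\rho)$ consists of essential predicates, i.e. $\mathcal{E}\subseteq\Strike(\rho)\cap\widetilde R_k$, and this is exactly the claim.

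The main obstacle I anticipate is the bookkeeping in step (i): one must be careful that ``projecting a variable out of $\rho_i$'' corresponds on the level of $\rho$ to existentially quantifying a variable that occurs in $\rho$ \emph{only} through that occurrence of $\rho_i$ — which is false in general, since a variable $x_j$ may be shared among several conjuncts. The fix is to not literally quantify $x_j$ in $\rho$, but rather to observe directly that the projection $\sigma$ of $\rho_i$ along one of its coordinates, pulled back along the variable-substitution, is a predicate of arity $<n$ obtained from $\rho$ by the $\Strike$ operation applied to a \emph{single} occurrence: formally, $\sigma(\dots)=\exists x_i\,\rho(\dots)$ makes sense as an element of $\Strike(\rho)$ only when $x_i$ is one of the free variables of $\rho$; so the cleanest route is to first pass to $\widetilde\rho$-style projections of $\rho$ itself and show each conjunct $\rho_i$, after decomposition, is subsumed by a conjunction of predicates of the form $\exists x_{i_1}\cdots\exists x_{i_t}\,\rho(z_1,\dots,z_n)$ with the remaining $z$'s distinct. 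This bookkeeping, while slightly delicate, is purely syntactic and should go through by a careful induction; I expect no conceptual difficulty beyond it.
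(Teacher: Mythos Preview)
Your inductive setup is right, but the route through the \emph{definition} of ``not essential'' creates exactly the obstacle you flag at the end. The definition only hands you \emph{some} lower-arity $\rho_1,\dots,\rho_l$ with $\rho=\bigwedge_i\rho_i(\ldots)$; these $\rho_i$ are arbitrary predicates and need not lie in $\Strike(\rho)$ at all, so there is no reason their essential pieces (which live in $\Strike(\rho_i)$) should lie in $\Strike(\rho)$. The claim ``hence lies in $\Strike(\rho)$'' in your second paragraph is therefore unjustified as written. Your proposed fix at the end (``pass to $\widetilde\rho$-style projections of $\rho$ itself'') is the correct move---and once you make it, the detour through the definition becomes entirely superfluous.

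The paper's proof is exactly that clean version. Instead of invoking the definition, it uses Lemma~\ref{sushnabor} directly: if $\rho$ is not essential then $\rho=\widetilde\rho$, i.e.
\[
\rho(x_1,\dots,x_n)=\sigma_1(x_2,\dots,x_n)\wedge\cdots\wedge\sigma_n(x_1,\dots,x_{n-1}),
\]
where $\sigma_i=\exists x_i\,\rho$. Each $\sigma_i$ is \emph{by construction} an element of $\Strike(\rho)$ of arity $n-1$, so the inductive hypothesis gives $\sigma_i\in\AND(\Strike(\sigma_i)\cap\widetilde R_k)$; since $\Strike(\sigma_i)\subseteq\Strike(\rho)$, one immediately gets $\rho\in\AND(\Strike(\rho)\cap\widetilde R_k)$. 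No bookkeeping about shared variables or ``subsumption'' is needed, because the conjuncts $\sigma_i$ are already projections of $\rho$ rather than arbitrary witnesses to non-essentiality.
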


\begin{proof}

The proof is by induction on the arity of $\rho.$
If the arity of $\rho$ is equal to~0, then $\rho$ is essential and the proof is trivial.
If $\rho$ is an essential predicate, then the lemma is trivial.
Suppose $\rho$ is not essential.
Then by Lemma \ref{sushnabor}, it follows that
$$\rho (x_{1},x_{2},\ldots,x_{n}) = \sigma_{1}(x_{2},\ldots,x_{n})\wedge  \sigma_{2}(x_{1},x_{3},\ldots,x_{n})\wedge
\ldots\wedge \sigma_{n}(x_{1},\ldots,x_{n-1}),$$
where $\sigma_{i}(x_{1},\ldots,x_{i-1},x_{i+1},\ldots,x_{n}) =
\exists x_{i}\; \rho(x_{1},\ldots,x_{n}).$
By the inductive assumption,
$\sigma_{i} \in  \AND(\Strike(\sigma_{i})\cap \widetilde R_{k}).$
Hence
$$\rho \in \AND\left(\bigcup \limits_{i=1}^{n} \AND\left(\Strike(\sigma_{i})\cap \widetilde R_{k}\right)\right)
\subseteq  \AND\left( \left(\bigcup \limits_{i=1}^{n} \Strike(\sigma_{i})\right) \cap \widetilde R_{k}\right).$$
Since
$\Strike(\sigma_{i})\subset \Strike(\rho)$ for every $i$,
we have $\rho \in \AND(\Strike(\rho)\cap \widetilde R_{k}).$

\end{proof}

\begin{lemma}\label{SuzhZamkn}

$[[S]\cap \widetilde R_{k}] = [S]$ for every $S\subseteq R_{k}.$

\end{lemma}

\begin{proof}

The inclusion $[[S]\cap \widetilde R_{k}] \subseteq [S]$ is trivial.
Let us prove the inclusion $[[S]\cap \widetilde R_{k}] \supseteq [S].$
Suppose $\rho \in [S],$ then by Lemma \ref{razlojenieNesushestvennogo},
it follows that $\rho \in \AND(\Strike(\rho)\cap \widetilde R_{k}).$
Since $\Strike(\rho)\subseteq [\{\rho\}]$ and $\AND(T)\subseteq [T]$ for every
$T\subseteq R_{k},$ we get
$\rho \in [[\{\rho\}]\cap \widetilde R_{k}]\subseteq [[S]\cap \widetilde R_{k}].$
This concludes the proof.

\end{proof}

It follows from the previous lemma that every closed set $S\subseteq R_{k}$
can be described by the set $S\cap \widetilde R_{k}$ of all essential predicates of $S.$

\begin{lemma}\label{arnostSush}

Suppose $\rho\in R_{k},$ $\alpha_{i},\beta_{i}\in E_{k}^{s_{i}}$ for $i\in \{1,2,\ldots,n\},$
$$\rho(\alpha_{1}\ldots\alpha_{n}) = 0,$$
$$\forall j \; \rho(\alpha_{1}\ldots\alpha_{j-1}\beta_{j}\alpha_{j+1}\ldots\alpha_{n}) =1,$$
then there exists an essential predicate $\rho' \in [\{\rho\}]$
such that $ar(\rho') \ge n.$

\end{lemma}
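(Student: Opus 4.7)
The plan is to apply Lemma~\ref{razlojenieNesushestvennogo} directly to $\rho$ and extract a single essential conjunct that touches every block of the concatenation $\alpha_1\ldots\alpha_n$. Set $m = s_1+\ldots+s_n$ so that $\rho$ has arity $m$, and partition the coordinate positions $\{1,\ldots,m\}$ into $n$ consecutive blocks $B_1,\ldots,B_n$ of sizes $s_1,\ldots,s_n$ matching the given splitting of the tuples. Note that each $s_j\ge 1$: otherwise $\alpha_j$ and $\beta_j$ would both be empty, and the two hypotheses $\rho(\alpha_1\ldots\alpha_n)=0$ and $\rho(\alpha_1\ldots\beta_j\ldots\alpha_n)=1$ would refer to the same tuple.

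By Lemma~\ref{razlojenieNesushestvennogo}, $\rho \in \AND(\Strike(\rho)\cap \widetilde R_k)$, so one can write
$$\rho(x_1,\ldots,x_m) = \rho_1(\ldots)\wedge \ldots\wedge \rho_l(\ldots),$$
where each $\rho_i \in \Strike(\rho)\cap \widetilde R_k$ is essential and the $i$-th conjunct uses some set $V_i \subseteq \{x_1,\ldots,x_m\}$ of variables, pairwise distinct inside the conjunct by the definition of $\AND$. The key claim I then want to establish is that at least one $V_i$ intersects every one of the $n$ blocks. Suppose, for contradiction, that each $V_i$ misses some block $B_{j_i}$. Since $\rho(\alpha_1\ldots\alpha_n)=0$, at least one conjunct, call it $\rho_{i_0}$, takes value $0$ on this tuple. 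But $\rho_{i_0}$ does not involve any variable of $B_{j_{i_0}}$, so replacing $\alpha_{j_{i_0}}$ by $\beta_{j_{i_0}}$ leaves its value unchanged, forcing $\rho(\alpha_1\ldots\beta_{j_{i_0}}\ldots\alpha_n)=0$ and contradicting the hypothesis.

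Hence some $V_i$ contains at least one coordinate from each of the $n$ blocks, and therefore $ar(\rho_i)=|V_i|\ge n$; since $\rho_i \in \Strike(\rho)\cap \widetilde R_k \subseteq [\{\rho\}]\cap \widetilde R_k$, this $\rho_i$ is the required essential predicate. I do not anticipate any real obstacle, since the argument is essentially a single pigeonhole applied to the decomposition given by Lemma~\ref{razlojenieNesushestvennogo}. The one subtlety worth emphasizing is that $\Strike$ forbids identification of variables, so each $V_i$ is literally a subset of the $m$ coordinates of $\rho$ and can be meaningfully intersected with the blocks $B_j$, which is what makes the pigeonhole step work.
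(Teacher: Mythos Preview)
Your argument is correct, and it is genuinely different from the paper's proof. The paper proceeds by induction on the arity of $\rho$: if $\rho$ is not essential, Lemma~\ref{sushnabor} gives $\rho=\widetilde\rho$, so one of the projections $\rho_i=\exists x_i\,\rho$ already vanishes on the corresponding truncated tuple, and one checks that the block structure survives the removal of coordinate $i$ (the block containing $i$ cannot be a singleton because the swapped tuple is in $\rho$); the induction then continues with $\rho_i$ in place of $\rho$.

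Your route bypasses the induction entirely by invoking Lemma~\ref{razlojenieNesushestvennogo} once and then running a single pigeonhole on the conjuncts. This is shorter and conceptually cleaner, essentially because Lemma~\ref{razlojenieNesushestvennogo} has already packaged the same induction. The paper's version, on the other hand, is more self-contained: it only uses Lemma~\ref{sushnabor} and does not rely on the full decomposition result. Both approaches land on the same essential object (a predicate in $\Strike(\rho)\cap\widetilde R_k$), just reached from opposite ends. Your observation that $\AND$ forbids repeated variables inside a conjunct, so that each $V_i$ is an honest subset of coordinates, is exactly the point that makes the pigeonhole step legitimate.
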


\begin{proof}

Let us prove this by induction on the arity of $\rho.$
Since $\sigma_{k}^{=}\in [\{\rho\}],$ the lemma is trivial for $ar(\rho)\le 2$.
Let $\gamma = \alpha_{1}\alpha_{2}\ldots\alpha_{n}.$
If $\rho$ is an essential predicate, then the proof is trivial. 
Assume that $\rho$ is not essential and $ar(\rho) = m.$
Suppose $\gamma_{i}$ is obtained from $\gamma$ by removing $i$-th element.
Let $$\rho_{i}(x_{1},\ldots,x_{i-1},x_{i+1},\ldots,x_{m}) = \exists x_{i} \; \rho(x_{1},\ldots,x_{m}).$$
By Lemma \ref{sushnabor},
$\rho = \widetilde\rho.$
Hence, $\rho(\gamma) = \rho_{1}(\gamma_{1})\wedge \ldots
\wedge \rho_{m}(\gamma_{m})=0,$
and $\rho_{i}(\gamma_{i}) = 0$ for some $i.$
Without loss of generality it can be assumed that
$i=1.$
Since $\rho(\beta_{1}\alpha_{2}\ldots\alpha_{n}) = 1,$
the length of $\alpha_{1}$ is greater then one.
Suppose $\alpha_{1}'$ is obtained from $\alpha_{1}$ by removing the first element,
and $\beta_{1}'$ is obtained from $\beta_{1}$ by removing the first element.
Therefore, we have
$$\rho_{1}(\alpha_{1}'\alpha_{2}\alpha_{3}\ldots\alpha_{n}) = 0,$$
$$\rho_{1}(\beta_{1}'\alpha_{2}\alpha_{3}\ldots\alpha_{n}) =1,$$
$$\forall j\ge 2 \;\; \rho_{1}(\alpha_{1}'\alpha_{2}\ldots\alpha_{j-1}\beta_{j}\alpha_{j+1}\ldots\alpha_{n}) =1.$$
Hence, by the inductive assumption
there exists an essential predicate $\rho' \in [\{\rho_{1}\}]\subseteq [\{\rho\}]$
such that $ar(\rho') \ge n.$ This completes the proof.

%
%

\end{proof}

\begin{lemma}\label{nebolshek}

Suppose $\rho,\rho_{1},\ldots,\rho_{m}\in R_{k},$ $m>k$ and
\begin{multline*}
\rho(x_{1,1},\ldots,x_{1,n_{1}},x_{2,1},\ldots,x_{2,n_{2}},\ldots,x_{m,1},\ldots,x_{m,n_{m}}) = \\ =
\exists y \; \rho_{1}(y,x_{1,1},\ldots,x_{1,n_{1}})\wedge
\ldots \wedge \rho_{m}(y,x_{m,1},\ldots,x_{m,n_{m}}).
\end{multline*}
Then $\rho$ is not an essential predicate.

\end{lemma}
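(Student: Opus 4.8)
The plan is to use the characterization of essential predicates from Lemma \ref{sushnabor}: it suffices to show that $\rho$ has no essential tuple. Suppose, for contradiction, that $(\alpha_1,\ldots,\alpha_m)$ is an essential tuple for $\rho$, where $\alpha_i = (a_{i,1},\ldots,a_{i,n_i})$; that is, $\rho(\alpha_1,\ldots,\alpha_m)=0$, and for each block $i$ (more precisely, for each individual coordinate inside each block) there is a single-coordinate change producing a tuple in $\rho$. Since $\rho(\alpha_1,\ldots,\alpha_m)=0$, by the defining formula there is \emph{no} witness $y=c\in E_k$ simultaneously satisfying all the conjuncts $\rho_i(c,\alpha_i)$; equivalently, for every $c\in E_k$ there is an index $t(c)\in\{1,\ldots,m\}$ with $\rho_{t(c)}(c,\alpha_{t(c)})=0$. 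This defines a map $t\colon E_k\to\{1,\ldots,m\}$.

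Now I would exploit $m>k=|E_k|$. The range of $t$ has size at most $k<m$, so some block index $j\in\{1,\ldots,m\}$ is not in the image of $t$; that is, $\rho_j(c,\alpha_j)=1$ for \emph{every} $c\in E_k$. I claim this block $j$ cannot be "fixed" to land in $\rho$ by changing one of its coordinates. Indeed, take any coordinate $(j,s)$ in block $j$ and any value $b\in E_k$, and consider the tuple obtained from $(\alpha_1,\ldots,\alpha_m)$ by replacing $a_{j,s}$ with $b$. For this modified tuple to lie in $\rho$ we would need some witness $c\in E_k$ with $\rho_i(c,\alpha_i)=1$ for all $i\neq j$ and $\rho_j(c,\alpha_j[a_{j,s}\mapsto b])=1$. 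But for \emph{every} $c$ there is still an index $t(c)\neq j$ with $\rho_{t(c)}(c,\alpha_{t(c)})=0$ — the conjuncts for $i\neq j$ are untouched by the modification — so no such witness exists. Hence the modified tuple is still outside $\rho$, so block $j$ furnishes no witnessing single-coordinate change. This contradicts the assumption that $(\alpha_1,\ldots,\alpha_m)$ is an essential tuple, and by Lemma \ref{sushnabor} $\rho$ is not essential. \qed

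The main subtlety is keeping the bookkeeping straight between "blocks" and individual coordinates: the definition of an essential tuple requires that for \emph{each coordinate} $i$ of the whole tuple there is a witnessing change at that coordinate, and I am showing that all coordinates lying in the single "free" block $j$ fail this. A minor degenerate case to mention: if some $n_j=0$ (an empty block), then block $j$ contributes no coordinates, but then one simply picks another block not in the image of $t$ if one exists, and if $m>k$ forces at least one \emph{nonempty} free block whenever $\rho$ depends on all its variables — and if $\rho$ does not depend on all its variables the conclusion is immediate anyway. So the argument goes through; the only real content is the counting step $|{\rm image}(t)|\le k<m$.
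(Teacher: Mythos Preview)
Your argument is correct and is essentially the same as the paper's, just organized dually. The paper sets $C_i=\{c\in E_k:\rho_i(c\alpha_i)=1\}$ and $D_j=\bigcap_{i\neq j}C_i$, observes from the essential-tuple witnesses that each $D_j\neq\varnothing$, checks that the $D_j$ are pairwise disjoint subsets of $E_k$, and concludes $m\le k$. Your choice function $t$ and the ``free block'' $j\notin\operatorname{im}t$ amount to exhibiting directly a $j$ with $D_j=\varnothing$ and then deriving the contradiction at the coordinates of that block; the pigeonhole content is identical.

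One small slip worth fixing: from ``$j$ is not in the image of $t$'' you write ``$\rho_j(c,\alpha_j)=1$ for every $c\in E_k$'', but that does not follow, since $t$ is merely \emph{one} choice of failing index for each $c$. Fortunately you never use this claim: what you actually invoke, and what is correct, is that for every $c$ there is some $i\neq j$ with $\rho_i(c,\alpha_i)=0$ (i.e.\ $D_j=\varnothing$), which is precisely what rules out any witness after modifying a coordinate in block $j$. Your remark on the degenerate case $n_j=0$ is more careful than the paper, which silently assumes each block is nonempty.
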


\begin{proof}

Assume the converse. By Lemma \ref{sushnabor},
there exists an essential tuple $\gamma$ for $\rho.$
Suppose
$\gamma = \alpha_{1}\alpha_{2}\ldots\alpha_{m}$
where $\alpha_{i}\in E_{k}^{n_{i}}$ for every $i.$
Put $C_{i} = \{c\in E_{k}\;|\; \rho_{i}(c\alpha_{i})\} = 1.$
Since $\gamma$ is an essential tuple,
we have $$C_{1}\cap C_{2}\cap \ldots \cap C_{m} = \varnothing,$$
$$D_{j} = \bigcap \limits_{i\neq j} C_{i} \neq \varnothing.$$
Hence $D_{i}\cap D_{j} = \varnothing$ for every $i,j.$
Since $D_{i}\subseteq E_{k}$ for every $i$, we have $m\le k.$
This concludes the proof.

\end{proof}

Suppose $G\subseteq R_{k},$ $M = \{\alpha_{1},\ldots,\alpha_{n}\}\subseteq E_{k}^{h}.$
Let us define a predicate $\rho_{M,G}.$
Put
$$\rho_{M,G} = \{ f(\alpha_{1},\alpha_{2},\ldots, \alpha_{n})\; |\;
f\in Pol(G)\cap P_{k}^{n}\}.$$

\begin{lemma}\label{trivialG}

Suppose $G\subseteq R_{k},$
$M = \{\alpha_{1},\ldots,\alpha_{n}\}\subseteq E_{k}^{h}.$
Then
$\rho_{M,G} \in [G].$

\end{lemma}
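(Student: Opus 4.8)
The plan is to show that $\rho_{M,G}$ is definable over $G$ by a primitive positive formula, so that membership in $[G]$ follows from the definition of the closure operator given in the excerpt. The key observation is that $\rho_{M,G}$ is, by construction, the smallest predicate containing the tuple $(\alpha_1,\dots,\alpha_n)$ that is preserved by every operation in $\Pol(G)$; equivalently it is the subalgebra of $\langle E_k; \Pol(G)\rangle^h$ generated by the rows (columns) of the matrix whose $i$-th column is $\alpha_i$. Here I am reading $\alpha_i \in E_k^h$ as giving, for each coordinate $j \in \{1,\dots,h\}$, a tuple $\beta_j = (\alpha_1(j), \alpha_2(j), \dots, \alpha_n(j)) \in E_k^n$; the predicate $\rho_{M,G}$ consists of all tuples $f(\alpha_1,\dots,\alpha_n)$ with $f \in \Pol(G)\cap P_k^n$, and its $j$-th coordinate is $f(\beta_j)$.

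First I would invoke the Galois connection. By Theorem \ref{bondar1}, $[G] = \Inv(\Pol(G))$, so to prove $\rho_{M,G} \in [G]$ it suffices to prove that $\rho_{M,G}$ is preserved by every $g \in \Pol(G)$. So let $g \in \Pol(G) \cap P_k^m$ and take $m$ tuples $\tau_1,\dots,\tau_m \in \rho_{M,G}$; each $\tau_r = f_r(\alpha_1,\dots,\alpha_n)$ for some $f_r \in \Pol(G)\cap P_k^n$. Then $g(\tau_1,\dots,\tau_m) = g(f_1(\alpha_1,\dots,\alpha_n),\dots,f_m(\alpha_1,\dots,\alpha_n)) = f(\alpha_1,\dots,\alpha_n)$, where $f \in P_k^n$ is the superposition $f(x_1,\dots,x_n) = g(f_1(x_1,\dots,x_n),\dots,f_m(x_1,\dots,x_n))$. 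Since $\Pol(G)$ is a clone (Theorem \ref{bondar2}), it is closed under superposition, so $f \in \Pol(G) \cap P_k^n$, and therefore $g(\tau_1,\dots,\tau_m) = f(\alpha_1,\dots,\alpha_n) \in \rho_{M,G}$. This shows $g$ preserves $\rho_{M,G}$, hence $\rho_{M,G} \in \Inv(\Pol(G)) = [G]$.

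The only point needing a touch of care — and the main (mild) obstacle — is the edge cases and the bookkeeping of what "arity $n$" means when the $\alpha_i$ are not distinct or when $n$ exceeds the actual number of essential coordinates: one should note that the projections $e_i^n(x_1,\dots,x_n) = x_i$ lie in $\Pol(G)$ (as $\Pol(G)$ is a clone), so each $\alpha_i = e_i^n(\alpha_1,\dots,\alpha_n) \in \rho_{M,G}$, confirming $M \subseteq \rho_{M,G}$ and that $\rho_{M,G}$ is nonempty whenever $n \ge 1$; if $n = 0$ one interprets $\rho_{M,G}$ directly from the definition. No genuine combinatorics is required here — the statement is essentially the assertion that the relational clone $[G]$ contains all subalgebras of finite powers of the corresponding algebra, which is immediate from the Galois correspondence. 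I would therefore keep the proof short, stating the superposition-closure argument as above and remarking that it exhibits $\rho_{M,G}$ as an invariant of $\Pol(G)$.
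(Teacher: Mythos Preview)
Your proof is correct and is essentially the same as the paper's: both invoke $[G]=\Inv(\Pol(G))$ from Theorem~\ref{bondar1} and then verify that every $g\in\Pol(G)$ preserves $\rho_{M,G}$ by writing $g(\tau_1,\ldots,\tau_m)$ as a superposition $g(f_1,\ldots,f_m)(\alpha_1,\ldots,\alpha_n)$ and using that $\Pol(G)$ is closed under superposition. The only cosmetic difference is that the paper frames the same computation as a proof by contradiction, while you give it directly.
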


\begin{proof}

Assume the converse. By Theorem~\ref{bondar1},
we have 
$[G] = \Inv(\Pol(G)).$
Hence, $\rho_{M,G}\notin \Inv(\Pol(G))$ and there exists a function
$f\in \Pol(G)$ such that
$f\notin \Pol(\rho_{M,G}).$
Therefore,
$f(\beta_{1},\ldots,\beta_{s}) \notin \rho_{M,G}$
for some $\beta_{1},\ldots,\beta_{s} \in \rho_{M,G}.$
By definition of $\rho_{M,G}$ there exist functions
$f_{1},\ldots,f_{s}\in \Pol(G)$ such that
$f_{i}(\alpha_{1},\alpha_{2},\ldots, \alpha_{n}) = \beta_{i}$ for every $i.$
Put $$g(x_{1},\ldots,x_{n}) = f(f_{1}(x_{1},\ldots,x_{n}),
f_{2}(x_{1},\ldots,x_{n}),\ldots,
f_{s}(x_{1},\ldots,x_{n})).$$
Then $g\in \Pol(G)$ and $$g(\alpha_{1},\alpha_{2},\ldots, \alpha_{n})=
f(\beta_{1},\ldots,\beta_{s})\in \rho_{M,G}.$$
This contradiction concludes the proof.

\end{proof}


Let us prove a theorem from Section 2.

\newcounter{backupone}
\setcounter{backupone}{\value{theorem}}
\setcounter{theorem}{\value{nufcounter}}

\begin{theorem}

Suppose $G\subseteq R_{k},$ then
$$Pol(G)\cap NUF_{k}^{n+1}\neq \varnothing \Longleftrightarrow
ar([G] \cap \widetilde R_{k}) \le n .$$

\end{theorem}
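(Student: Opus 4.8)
The plan is to prove both implications directly using the Galois connection (Theorem \ref{bondar1}) together with the characterization of essential predicates via essential tuples (Lemma \ref{sushnabor}) and the arity-lifting Lemma \ref{arnostSush}. The statement asserts an equivalence between $\Pol(G)$ containing an $(n+1)$-ary near-unanimity function and every essential predicate in $[G]$ having arity at most $n$; since a near-unanimity function preserves every predicate it preserves, and $[G] = \Inv(\Pol(G))$, working with $[G]$ rather than $G$ is harmless.

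For the implication ``$\Leftarrow$'': assuming $ar([G]\cap\widetilde R_k)\le n$, I would explicitly construct a candidate $(n+1)$-ary near-unanimity function $f$ and verify it preserves $G$. The natural candidate, exactly as in the Pixley/Baker–Pixley tradition referenced after the theorem, is to define $f(a_0,\dots,a_n)$ on each tuple by looking at the multiset of its coordinates: if some value occurs in at least $n$ of the $n+1$ positions, output that value (this forces the near-unanimity identities); otherwise, the output may be chosen arbitrarily subject to one global consistency requirement. To show such a choice makes $f$ preserve every $\rho\in[G]$, suppose not: then there are $\alpha_0,\dots,\alpha_n\in\rho$ with $f(\alpha_0,\dots,\alpha_n)=\gamma\notin\rho$. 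By Lemma \ref{razlojenieNesushestvennogo} we may assume $\rho$ is essential, hence by Lemma \ref{sushnabor} $\rho$ has arity $\le n$, say arity $m\le n$. With only $m$ coordinates and $n+1>m$ input tuples, a counting/pigeonhole argument on the $m$ coordinate positions (each position ``blames'' at most one input tuple whose value there differs from $\gamma$'s) shows that some input tuple $\alpha_j$ agrees with $\gamma$ in every coordinate; but then on each coordinate at least $n$ of the $n+1$ inputs carry $\gamma$'s value, forcing $\gamma=\alpha_j\in\rho$, a contradiction. The only delicate point is arranging the arbitrary choices globally so the argument goes through uniformly; I expect one can do this by building $f$ on $\Inv$-data, e.g. defining $f$ as a suitable polymorphism obtained from $\rho_{M,G}$ via Lemma \ref{trivialG}, or by a straightforward induction on the set of tuples ordered so that consistency is maintained.

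For the implication ``$\Rightarrow$'': assume $\Pol(G)$ contains a near-unanimity function $f$ of arity $n+1$, and suppose for contradiction that $[G]\cap\widetilde R_k$ contains an essential predicate $\rho$ of arity $N\ge n+1$. By Lemma \ref{sushnabor}, $\rho$ has an essential tuple $(a_1,\dots,a_N)$: $\rho(a_1,\dots,a_N)=0$, yet for each $i$ there is $b_i$ with $\rho(a_1,\dots,b_i,\dots,a_N)=1$. These $N$ ``witness'' tuples together with the identities satisfied by $f$ produce a contradiction: apply $f$ to an appropriately chosen list of $n+1$ of these witness tuples (plus possibly the constant-pattern tuple $(a_1,\dots,a_N)$-shifted versions), using that $N\ge n+1$ so there are enough distinct witnesses, and use the near-unanimity identities coordinatewise to conclude $f$ maps them to $(a_1,\dots,a_N)$, which is not in $\rho$ — contradicting $f\in\Pol(\rho)$ since $\rho\in[G]=\Inv(\Pol(G))$. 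Here Lemma \ref{arnostSush} is the natural tool to make the arity bookkeeping clean. The main obstacle I anticipate is precisely this coordinatewise argument in the ``$\Rightarrow$'' direction: one must choose which $n+1$ of the $N$ witness tuples to feed to $f$ so that in every one of the $N$ coordinates, at least $n$ of the chosen inputs carry the value $a_i$ and at most one carries $b_i$; getting the combinatorics right (and handling the case $N$ much larger than $n+1$ by restricting to $n+1$ coordinates and quantifying out the rest, which is where essentiality is used to keep the restricted predicate essential) is the crux.
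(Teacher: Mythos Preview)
Your ``$\Rightarrow$'' direction is correct and matches the paper's argument, but you are overcomplicating it. There is no need to restrict to $n+1$ coordinates or to invoke Lemma~\ref{arnostSush}: simply take the first $n+1$ witness tuples $\beta_i = (a_1,\ldots,a_{i-1},b_i,a_{i+1},\ldots,a_N)\in\rho$ for $i=1,\ldots,n+1$. In coordinate $j\le n+1$ these tuples carry the values $a_j,\ldots,a_j,b_j,a_j,\ldots,a_j$ (with $b_j$ in the $j$-th slot), so $f$ returns $a_j$ by the near-unanimity identity; in coordinate $j>n+1$ all values are $a_j$. Hence $f(\beta_1,\ldots,\beta_{n+1})=(a_1,\ldots,a_N)\notin\rho$, contradicting $f\in\Pol(\rho)$ since $\rho\in[G]=\Inv(\Pol(G))$.

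Your ``$\Leftarrow$'' direction has a genuine gap. The pigeonhole claim ``each position blames at most one input tuple whose value there differs from $\gamma$'s'' is false for a near-unanimity function with arbitrarily chosen values off the near-unanimity patterns: for instance, with $k=4$, $n=2$, and $\rho=\{0,1,2\}\subset E_4$ of arity $1\le n$, a ternary NUF $f$ with $f(0,1,2)=3$ fails to preserve $\rho$. An arbitrary extension of the NUF identities simply need not lie in $\Pol(G)$, so the ``explicit candidate'' strategy cannot work without further input from $G$. Your own fallback --- building $f$ from $\rho_{M,G}$ via Lemma~\ref{trivialG} --- is exactly the paper's approach and is the correct one: let $M=\{\alpha_1,\ldots,\alpha_{n+1}\}$ be the columns of the $(n+1)k^2\times(n+1)$ matrix listing all near-unanimity input patterns, so that each $\alpha_i$ has the block form $\gamma_1\cdots\gamma_{i-1}\delta_i\gamma_{i+1}\cdots\gamma_{n+1}$. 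If $\gamma_1\cdots\gamma_{n+1}\in\rho_{M,G}$, then by definition some $h\in\Pol(G)$ sends $(\alpha_1,\ldots,\alpha_{n+1})$ to $\gamma_1\cdots\gamma_{n+1}$, and this $h$ is a near-unanimity function. If not, then $\gamma_1\cdots\gamma_{n+1}\notin\rho_{M,G}\in[G]$ while each $\alpha_i\in\rho_{M,G}$, and Lemma~\ref{arnostSush} (which belongs in \emph{this} direction, not in ``$\Rightarrow$'') applied to the block decomposition produces an essential predicate in $[\{\rho_{M,G}\}]\subseteq[G]$ of arity at least $n+1$, a contradiction.
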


\setcounter{theorem}{\value{backupone}}

\begin{proof}

Suppose that $Pol(G)\cap NUF_{k}^{n+1}\neq  \varnothing.$
Then there exists $h\in Pol(G)\cap NUF_{k}^{n+1}.$
Assume that $ar([G] \cap \widetilde R_{k})> n .$
Then we have $\rho \in [G] \cap \widetilde R_{k}^{m}$
where $m>n.$
Suppose $(a_{1},\ldots,a_{m})$ is an essential tuple for $\rho,$
then there exist $b_{1},\ldots,b_{m}\in E_{k}$ such that for every $i$
$$\rho(a_{1},\ldots,a_{i-1},b_{i},a_{i+1},\ldots,a_{m}) = 1.$$
Since $h\in Pol(\rho),$ we have
$$h \begin{pmatrix}
b_{1} & a_{1} & a_{1} &\ldots & a_{1} \\
a_{2} & b_{2} & a_{2} &\ldots & a_{2} \\
a_{3} & a_{3} & b_{3} &\ldots & a_{3} \\
\ldots &\ldots &\ldots &\ldots & \ldots \\
a_{n+1} & a_{n+1} & a_{n+1} &\ldots & b_{n+1} \\
a_{n+2} & a_{n+2} & a_{n+2} &\ldots & a_{n+2} \\
\ldots &\ldots &\ldots &\ldots & \ldots \\
a_{m} & a_{m} & a_{m} &\ldots & a_{m}
 \end{pmatrix}=
 \begin{pmatrix}
a_{1}\\a_{2}\\a_{3}\\ \ldots \\ a_{n+1}\\ a_{n+2} \\ \ldots \\ a_{m}
 \end{pmatrix}
 \in \rho.$$
This contradiction proves that $ar([G] \cap \widetilde R_{k}) \le n.$

Suppose that $ar([G] \cap \widetilde R_{k}) \le n.$
Let us define a matrix.
It has $n+1$ columns and $(n+1)\cdot k^{2}$ rows.
The first $k^{2}$ rows contain all
tuples of the form $(a,b,b,\ldots, b),$
where $a,b\in E_{k}.$
The next $k^{2}$ rows contain all
tuples of the form $(b,a,b,\ldots, b),$
where $a,b\in E_{k},$ and so on.
By $\alpha_{i}$ we denote the $i$-th column of this matrix.
Let $M = \{\alpha_{1},\alpha_{2},\ldots,\alpha_{n+1}\}.$

By Lemma \ref{trivialG}, $\rho_{M,G}\in [G].$
It follows from the definition of $\alpha_{1},\alpha_{2},\ldots,\alpha_{n+1}$ that
there exist
$$\gamma_{1},\gamma_{2},\ldots,\gamma_{n+1},
\delta_{1},\delta_{2},\ldots,\delta_{n+1}\in E_{k}^{k^{2}}$$
such that
$$\alpha_{i} =
\gamma_{1}\gamma_{2}\ldots \gamma_{i-1} \delta_{i} \gamma_{i+1}\ldots \gamma_{n+1}$$
for every $i.$

Assume that $\gamma_{1}\gamma_{2}\ldots\gamma_{n+1} \in \rho_{M,G},$
then there exists a function $h\in \Pol(G)$
such that $h(\alpha_{1},\alpha_{2},\ldots,\alpha_{n+1})=\gamma_{1}\gamma_{2}\ldots\gamma_{n+1}.$
By definition, $h\in NUF_{k}^{n+1}.$

Assume that $\gamma_{1}\gamma_{2}\ldots\gamma_{n+1} \notin \rho_{M,G},$
Since $Pol(G)$ contains all projections,
$\alpha_{i}\in \rho_{M,G}$ for every~$i.$
Then by Lemma \ref{arnostSush}
we have $ar([G] \cap \widetilde R_{k}) \ge n+1.$
This contradiction completes the proof.

\end{proof}

Suppose $G\subseteq R_{k},$ $|[G]\cap \widetilde R_{k}|<\infty.$
By $MAX(G)$ we denote the set of all predicates $\rho \in [G]$
such that
$\rho$ is an essential predicate of maximal arity in $[G],$
$\rho \not < \sigma$
for every essential predicate $\sigma \in[G]$ of the maximal arity.

\begin{lemma}\label{VycherkivanieSvobodnih}

Suppose $G\subseteq R_{k},$ $|[G]\cap \widetilde R_{k}|<\infty,$
$\rho\in MAX(G),$
$\rho' \in [G],$
$$\rho(x_{1},\ldots,x_{n}) =
\rho'(\underbrace{x_{1},\ldots,x_{1}}_{m_{1}},
\underbrace{x_{2},\ldots,x_{2}}_{m_{2}},
\ldots,
\underbrace{x_{n},\ldots,x_{n}}_{m_{n}}).$$
Then
there exist $i_{1},\ldots,i_{n}$ such that
$$\rho(x_{1},\ldots,x_{n}) =
\exists t_{1} \exists t_{2}\ldots \exists  t_{l} \;  \rho'(z_{1,1},\ldots,z_{1,m_{1}}, 
\ldots,z_{n,1},\ldots,z_{n,m_{n}}),
$$
where $z_{j,i_{j}} = x_{j}$ for every $j,$
$z_{i,j}$ are different for every $i,j.$
Therefore $\rho \in Strike(\rho').$

\end{lemma}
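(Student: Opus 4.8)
\textbf{Proof proposal for Lemma \ref{VycherkivanieSvobodnih}.}

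The plan is to start from the given representation of $\rho$ as $\rho'$ with blocks of variables identified, and to ``un-identify'' as many copies as possible while keeping $\rho$ unchanged, using the maximality of $\rho$ inside $MAX(G)$ as the crucial lever. Concretely, for each block $j$ I would introduce fresh existentially quantified variables $z_{j,1},\dots,z_{j,m_j}$ (all distinct across the whole formula), obtaining a predicate
$$
\rho''(x_1,\dots,x_n) = \exists (\text{fresh vars})\; \rho'(z_{1,1},\dots,z_{1,m_1},\dots,z_{n,1},\dots,z_{n,m_n})
$$
in which, within block $j$, exactly one variable (say $z_{j,i_j}$) is set equal to $x_j$ and the rest are genuinely new. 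The target statement is exactly that $\rho'' = \rho$ for a suitable choice of the indices $i_1,\dots,i_n$. Since $\rho \in [G]$ is obtained from $\rho''$ by further identifying variables, we automatically have $\rho \le \rho''$ (or more precisely $\rho''$ contains $\rho$ after the reverse identification). The content is the reverse inequality / equality.

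The key step is to show that for a correct choice of the $i_j$ the predicate $\rho''$ lies in $[G]$, is essential, and has arity $n = ar(\rho)$; then maximality of $\rho$ in $MAX(G)$ forces $\rho'' = \rho$. First, $\rho'' \in [G]$ is immediate since it is built from $\rho' \in [G]$ by a primitive positive formula. Second, $ar(\rho'') = n$ by construction. Third, and this is the heart of the argument, I need to choose the distinguished indices $i_1,\dots,i_n$ so that $\rho''$ is essential. To do this I would use Lemma \ref{sushnabor} and Lemma \ref{arnostSush}: take an essential tuple $(a_1,\dots,a_n)$ for $\rho$ (it exists because $\rho$ is essential) together with the witnesses $b_1,\dots,b_n$. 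Lifting this tuple back through the formula for $\rho''$ — assigning $a_j$ to every variable in block $j$ — gives a tuple on which $\rho''$ vanishes, and for each block $j$ there is some position that can be switched (to a value derived from $b_j$) to make $\rho'$ true. Choosing $i_j$ to be such a ``switchable'' position in block $j$, the resulting tuple $(a_1,\dots,a_n)$ becomes an essential tuple for $\rho''$, so $\rho''$ is essential by Lemma \ref{sushnabor}. If instead no single position in some block is switchable, the tuple is not essential for $\rho''$, but then one can apply Lemma \ref{arnostSush} to the block structure to produce an essential predicate in $[G]$ of arity $\ge n$, and a short additional argument (using that the whole block is forced) yields an essential predicate strictly above $\rho$ or of larger arity, contradicting $\rho \in MAX(G)$.

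Having produced $\rho'' \in [G] \cap \widetilde R_k^n$ with $\rho \le \rho''$ (after the reverse identification, $\rho$ is the predicate obtained from $\rho''$ by collapsing each block to one variable, hence pointwise $\rho \le \rho''$ on the $n$ coordinates), maximality in $MAX(G)$ — specifically the condition $\rho \not< \sigma$ for every essential $\sigma \in [G]$ of maximal arity — gives $\rho'' = \rho$. This is precisely the claimed representation, and since $\Strike(\rho')$ is by definition the set of predicates obtained from $\rho'$ by existential quantification and renaming with all remaining variables distinct, we conclude $\rho \in \Strike(\rho')$. The main obstacle I anticipate is the bookkeeping in the third step: correctly choosing the distinguished index in each block simultaneously so that a single tuple witnesses essentiality of $\rho''$, and handling the degenerate case where a whole block behaves ``rigidly'' without breaking the arity/maximality bound. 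Everything else is routine manipulation of primitive positive formulas together with the Galois-theoretic fact $[G] = \Inv(\Pol(G))$ and the essentiality criteria of Lemma \ref{sushnabor} and Lemma \ref{arnostSush}.
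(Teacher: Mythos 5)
Your high-level plan (use maximality of $\rho$ in $MAX(G)$ to force $\rho''=\rho$ once $\rho''$ is shown to be essential of arity $n$ and $\ge\rho$) is the right lever, but there is a genuine gap at the step you yourself flag as the heart of the argument, and it is not merely bookkeeping.

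You claim that "assigning $a_j$ to every variable in block $j$ gives a tuple on which $\rho''$ vanishes." This confuses the formula with the predicate it defines: in $\rho''$ all block positions except $z_{j,i_j}$ are existentially quantified, so $\rho''(a_1,\ldots,a_n)=\exists(\text{fresh})\,\rho'(\ldots)$ is a disjunction over all assignments to the fresh variables. The particular assignment "all equal to $a_j$" gives $\rho'(\gamma)=\rho(a_1,\ldots,a_n)=0$, but some other assignment may well give $1$, in which case $\rho''(a_1,\ldots,a_n)=1$ and the tuple is not essential for $\rho''$. Without control of this, $\rho''$ could be a non-essential predicate strictly above $\rho$, and the $MAX(G)$ condition ($\rho\not<\sigma$ only for essential $\sigma$ of maximal arity) then gives no contradiction. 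Your "degenerate case" paragraph, which is supposed to handle exactly this, only gestures at Lemma~\ref{arnostSush} and "a short additional argument"; it does not specify how to produce an essential predicate of large arity from the failure, and I do not see how to do it cleanly from the one-shot setup.

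This is precisely what the paper's proof is designed to avoid. It proceeds by induction on $ar(\rho')-n$ and peels off a single coordinate at a time: it uses Lemma~\ref{sushnabor} to write $\rho'=\widetilde{\rho'}$ (legitimate because $\rho'$ cannot be essential when $ar(\rho')>n$ and $\rho$ has maximal essential arity), locates one coordinate $i$ with $\rho_i(\gamma_i)=0$, checks that coordinate is not the unique representative of its block (so $m_1>1$), and shows the resulting $\sigma$ of arity $n$ is essential with $\sigma\ge\rho$, whence $\sigma=\rho$ by maximality; then it recurses. Each step adds one existential quantifier, and the indices $i_j$ emerge as whichever positions survive. This sequential choice is exactly what guarantees $\rho''(a_1,\ldots,a_n)=0$ at the end — the property your one-shot argument asserts but does not establish. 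If you want to keep a more direct presentation, you still need to run this induction (or an equivalent exchange argument) to select the $i_j$'s; there is no free choice of distinguished positions.
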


\begin{proof}

Suppose $ar(\rho') = m.$
Let us prove this lemma by induction on $m-n.$
If $m=n$ then the proof is trivial.

Assume that $m>n.$
Let $(a_{1},a_{2},\ldots,a_{n})$ be an essential tuple for $\rho.$
Put $$\gamma =
\underbrace{a_{1}\ldots a_{1}}_{m_{1}}
\underbrace{a_{2}\ldots a_{2}}_{m_{2}}
\ldots
\underbrace{a_{n}\ldots a_{n}}_{m_{n}}.$$

Since $\rho \in MAX(G)$ and $m>n,$ $\rho'$ is not an essential predicate.
Suppose $\gamma_{i}$ is obtained from $\gamma$ by removing $i$-th element.
Let $$\rho_{i}(y_{1},\ldots,y_{i-1},y_{i+1},\ldots,y_{m}) = \exists y_{i} \; \rho'(y_{1},\ldots,y_{m}).$$
By Lemma \ref{sushnabor},
$\rho' = \widetilde\rho'.$
Hence, $\rho'(\gamma) = \rho_{1}(\gamma_{1})\wedge \ldots
\wedge \rho_{m}(\gamma_{m})=0,$
and $\rho_{i}(\gamma_{i}) = 0$ for some $i.$
Without loss of generality it can be assumed that
$i=1.$
Since $$\rho(\underbrace{b_{1}\ldots b_{1}}_{m_{1}}
\underbrace{a_{2}\ldots a_{2}}_{m_{2}}
\ldots
\underbrace{a_{n}\ldots a_{n}}_{m_{n}}) =
\rho_{1}(\underbrace{b_{1}\ldots b_{1}}_{m_{1}-1}
\underbrace{a_{2}\ldots a_{2}}_{m_{2}}
\ldots
\underbrace{a_{n}\ldots a_{n}}_{m_{n}})= 1,$$
we have $m_{1}>1.$ Let
$$\sigma(x_{1},\ldots,x_{n}) =
\rho_{1}(\underbrace{x_{1},\ldots,x_{1}}_{m_{1}-1},
\underbrace{x_{2},\ldots,x_{2}}_{m_{2}},
\ldots,
\underbrace{x_{n},\ldots,x_{n}}_{m_{n}}).$$
It can be easily checked that
$(a_{1},a_{2},\ldots,a_{n})$ is an essential tuple for $\sigma.$
Moreover, $\sigma\ge \rho.$
Since $\rho \in MAX(G),$  we have $\sigma = \rho.$
Hence by the inductive assumption,
there exist $i_{1},\ldots,i_{n}$ such that
$$\rho(x_{1},\ldots,x_{n}) =
\exists t_{1} \exists t_{2}\ldots \exists  t_{l} \;  \rho_{1}(z_{1,1},\ldots,z_{1,m_{1}-1},
\ldots,z_{n,1},\ldots,z_{n,m_{n}}),$$
where $z_{j,i_{j}} = x_{j}$ for every $j,$
$z_{i,j}$ are different for every $i,j.$
Therefore
$$\rho(x_{1},\ldots,x_{n}) =
\exists y_{1} \exists t_{1} \exists t_{2}\ldots \exists  t_{l} \;
\rho' (y_{1},z_{1,1},\ldots,z_{1,m_{1}-1}, 
\ldots,z_{n,1},\ldots,z_{n,m_{n}}).$$
This completes the proof.

%
%

\end{proof}

%
%
%

\section{Main equation}

Suppose $C\subseteq E_{k}$
then by $(x\in C)$ we denote the predicate $\rho$ of arity one such that
$$\rho(x) = 1 \Longleftrightarrow (x\in C).$$

\begin{lemma}\label{PosledovatelnostMnojestv}

Let
$C\subseteq E_{k},$
$\alpha \in E_{k}^{n-2},$
$\rho \in R_{k}^{n},$
$$\forall d \in C\; \exists \beta \in E_{k}^{n-2} \; \rho(d d \beta) =  1,$$
$$D_{0} = \{a\}\subseteq C,
D_{i+1} = \{e\in C | \exists d \in D_{i}:\; \rho(d e \alpha) =1 \},$$
$$\forall m\ge 1 \; \exists j\ge m \; (D_{j}\neq D_{j+1}).$$
Then $$|[\{\rho,(x\in C)\} \cup SR_{k} ]\cap \widetilde R_{k}| = \infty.$$

\end{lemma}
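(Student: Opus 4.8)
The plan is to build, from the data $C$, $\alpha$, $\rho$ and the stabilizing-to-never sequence $D_0\subseteq D_1\subseteq\ldots$, an infinite family of essential predicates of unbounded arity inside $[\{\rho,(x\in C)\}\cup SR_k]$. The natural candidates are ``chain'' predicates: for each $m$ define
$$\tau_m(x_0,x_1,\ldots,x_m)=\exists y_1\ldots\exists y_{m-1}\;\rho(x_0,y_1,\alpha)\wedge\rho(y_1,y_2,\alpha)\wedge\ldots\wedge\rho(y_{m-1},x_m,\alpha),$$
possibly conjoined with $(x_0\in D_0)$, i.e. $x_0=a$, and with $(x_i\in C)$ on the internal/terminal coordinates, and similarly a version $\tau_m'$ that remembers the second-coordinate ``reachable set'' $D_m$. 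All of these are in $[\{\rho,(x\in C)\}\cup SR_k]$ since they are positive primitive formulas over $\rho$, the unary predicate $(x\in C)$, the constants $\rho_{=,d}$ that cut $\alpha$ in (using the remark after the definition of $SR_k$ that substituting a constant stays in $[\{\rho\}\cup SR_k]$), and equality. The monotone sequence $D_0\subseteq D_1\subseteq\cdots\subseteq C$ in the finite set $E_k$ must stabilize; the hypothesis $\forall m\ \exists j\ge m\ (D_j\neq D_{j+1})$ then forces $D_j=D_{j+1}$ to fail for infinitely many $j$ while also holding for infinitely many $j$ — so in fact the sequence is \emph{not} eventually constant, which for a monotone sequence of subsets of the finite set $E_k$ is impossible unless I read the hypothesis as ``the sequence does not stabilize along a cofinal set'', i.e. $D_j\subsetneq D_{j+1}$ infinitely often is excluded; the correct reading is that $D_j\ne D_{j+1}$ keeps recurring, which (since $D$ is monotone increasing and bounded) cannot literally happen — hence the sequence must genuinely be non-monotone or the indices must be read cyclically. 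I would therefore first pin down precisely which quantity oscillates: it is the pair $(D_j,\ ``\text{the current frontier}")$, and the point is that the transition operator $X\mapsto\{e\in C\mid\exists d\in X:\rho(de\alpha)=1\}$ has no fixed point reached from $\{a\}$, so iterating it produces a sequence that is \emph{eventually periodic with period $>1$}. That periodicity is the engine.

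The main steps, in order, would be: (1) Show each $\tau_m$ (and $\tau_m'$) lies in $[\{\rho,(x\in C)\}\cup SR_k]$ — routine, as above. (2) Identify an essential tuple for a suitable member of the family: using the oscillation of $D_j$, pick $m$ large and find values $a_0,\ldots,a_m$ with $\tau_m(a_0,\ldots,a_m)=0$ but, for each coordinate $i$, a replacement $b_i$ making the formula true; here $a_0=a$, the internal structure of the chain is used to witness the $b_i$ for internal coordinates via the first hypothesis $\forall d\in C\,\exists\beta\,\rho(dd\beta)=1$ (which lets us ``short-circuit'' one link), and the oscillation of $D_j$ produces the global $0$ together with a single-coordinate repair at the far end. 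By Lemma~\ref{sushnabor}, existence of an essential tuple means $\tau_m$ (or, more precisely, the predicate $\widetilde{\tau_m}$-witnessing relation) has an essential predicate of arity $\ge m+1$ in $[\{\tau_m\}]\subseteq[\{\rho,(x\in C)\}\cup SR_k]$, via Lemma~\ref{arnostSush}. (3) Let $m\to\infty$: since the oscillation recurs at arbitrarily large indices, for every $N$ we get an essential predicate of arity $\ge N$, whence $|[\{\rho,(x\in C)\}\cup SR_k]\cap\widetilde R_k|=\infty$.

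The hard part will be step (2): constructing the essential tuple so that all $n$ single-coordinate repairs succeed simultaneously while the full tuple is rejected. The subtlety is that the chain formula $\tau_m$ is ``forgiving'' — existential internal variables give many ways to satisfy it — so making $\tau_m(a_0,\ldots,a_m)=0$ requires choosing the $a_i$ along an ``impossible path'' dictated by the complement of the reachable sets $D_j$, and then the first hypothesis ($\rho(dd\beta)=1$ for $d\in C$) must be exactly what lets a single coordinate change $a_i\mapsto b_i$ splice a valid sub-path back in. I expect to need the oscillation to supply two indices $j<j'$ in the periodic part with $D_j\ne D_{j'}$ and an element that is in one but not the other, use that element to force the $0$ at the end of the chain, and use the period together with $\rho(dd\beta)=1$ to repair any single broken link; propagating ``repairability'' past the broken link toward $x_m$ without also accidentally satisfying the whole formula is the delicate bookkeeping. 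Once the essential tuple is in hand, Lemmas~\ref{sushnabor} and~\ref{arnostSush} finish the argument mechanically, and the unboundedness in $m$ gives the claimed infinitude.
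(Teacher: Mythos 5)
Your overall strategy (build chain predicates, locate an essential tuple, invoke Lemma~\ref{arnostSush}, let the index grow) is aligned with the paper, but the proposal has a gap at the central point: \emph{which coordinates supply the essential tuple}. Your $\tau_m$ fixes the context $\alpha$ inside the formula and proposes to find an essential tuple among the chain variables $x_0,\ldots,x_m$. That cannot work for two reasons. First, as soon as those chain variables are free, the formula is a conjunction of terms each of which omits most of them, so by definition the resulting predicate is never essential for $m\ge 2$; no essential tuple in the sense of Lemma~\ref{sushnabor} exists for it. Second, your only ``repair'' tool, the hypothesis $\forall d\in C\,\exists\beta\,\rho(dd\beta)=1$, repairs a link by \emph{changing $\alpha$ to $\beta$}; with $\alpha$ hardwired into $\tau_m$ this repair is unavailable. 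The paper's key move is the opposite: existentially quantify \emph{all} the chain variables, fix the endpoints to $a$ and to some $d\in D_j\setminus D_{j+1}$ with $j$ larger than the supposed maximal essential arity, and leave the $(n-2)$-blocks that feed the $\alpha$-slot of each $\rho$-term as free variables. Denote the result $\delta$. Plugging $\alpha$ in every block gives $\delta(\alpha\cdots\alpha)=0$ since $d\notin D_{j+1}$; replacing exactly one block by $\beta_i$ (from the short-circuit hypothesis applied to a vertex $a_i$ on the length-$j$ path from $a$ to $d$) gives $1$. This is precisely the hypothesis of Lemma~\ref{arnostSush} with blocks $\alpha_i=\alpha$, $\beta_i$, and it yields an essential predicate of arity $\ge j+1$. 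Note also that, as written, your $\tau_m$ doesn't actually mention $x_1,\ldots,x_{m-1}$ on the right-hand side, which is a symptom of the same confusion about where the arity is supposed to come from.

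A secondary issue: you spend a paragraph wrestling with whether $D_0\subseteq D_1\subseteq\cdots$, but that monotonicity was never asserted and does not hold (the transition $X\mapsto\{e\in C\mid\exists d\in X:\rho(de\alpha)=1\}$ need not contain its argument). The paper handles this cleanly inside a proof by contradiction: supposing the set of essential predicates finite with maximal arity $m$, if one had $D_i\subseteq D_{i+1}$ for all $i>m$ the sequence would stabilize, contradicting the hypothesis, so some $j>m$ has $D_j\not\subseteq D_{j+1}$; pick $d\in D_j\setminus D_{j+1}$. You eventually arrive near this, but the muddle is worth flagging because the element $d\in D_j\setminus D_{j+1}$ is exactly what makes $\delta(\alpha\cdots\alpha)=0$, and you never isolate it.
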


\begin{proof}

Assume the converse. Suppose
$m$ is the maximal arity of essential predicates from $[\{\rho,(x\in C)\} \cup SR_{k} ].$
Assume that
$D_{i}\subseteq D_{i+1}$ for every $i>m.$
Since $E_{k}$ is finite,  there exists
$m_{0}\ge m$ such that $D_{i} = D_{m_{0}}$
for every $i>m_{0}.$
By the conditions of the lemma,
there exists $i\ge m_{0}$ such that
$D_{i}\neq D_{i+1}.$
This contradiction proves that
there exists $j>m$ such that
$D_{j}\not\subseteq D_{j+1}.$

Suppose
$$\sigma_{1}(x_{1},x_{2},y_{1},\ldots, y_{n-2}) = \rho(x_{1},x_{2},y_{1},\ldots, y_{n-2}),$$
\begin{multline*}\sigma_{i+1}(x_{1},x_{2},y_{1},\ldots, y_{(n-2)\cdot (i+1)}) = \\ =
\exists z \; (z\in C)\wedge \sigma_{i}(x_{1},z,y_{1},\ldots, y_{(n-2)\cdot i})
\wedge \rho ( z,x_{2}, y_{(n-2)\cdot i+1},\ldots, y_{(n-2)\cdot (i+1)})\end{multline*}

Let $d \in D_{j}\setminus D_{j+1},$
$$\delta(y_{1},y_{2},\ldots, y_{(n-2)\cdot (j+1)}) =
\sigma_{j+1}(a,d,y_{1},y_{2},\ldots, y_{(n-2)\cdot (j+1)}).$$
It follows from the definition of $D_{j}$
that there exists a sequence
$a_{0},a_{1},\ldots,a_{j}$
such that $a_{0}= a,$ $a_{j} = d,$
$\rho(a_{i}a_{i+1}\alpha) = 1$ for every $i.$

For every $i\in \{0,1,\ldots,j\}$ there is
$\beta_{i} \in E_{k}^{n-2}$ such that
$\rho(a_{i}a_{i}\beta_{i}) = 1.$
Then it is easy to check that 
$\delta(\underbrace{\alpha \ldots \alpha}_{i} \beta_{i} \underbrace{\alpha \ldots \alpha}_{j-i}) =1$
for every $i.$
Since $d\notin D_{j+1},$ we have 
$\delta(\underbrace{\alpha \ldots \alpha}_{j+1}) = 0.$
It follows from Lemma \ref{arnostSush} that
there exists an essential predicate $\rho' \in [\{\delta\}] \subseteq [\{\rho,(x\in C)\} \cup SR_{k} ]$
such that $ar(\rho') \ge j+1>m.$
This contradiction proves the lemma.

\end{proof}

\begin{lemma}\label{EstSymmetrichnoeRebro}

Suppose
$C\subseteq E_{k},$
$\alpha \in E_{k}^{n-2},$
$\rho \in R_{k}^{n},$
$a,b\in C$
$$\forall d \in C \; \rho(d d \alpha) =  0,$$
$$\forall d \in C\; \exists \beta \in E_{k}^{n-2} \; \rho(d d \beta) =  1,$$
$$\rho(a b \alpha) = \rho(b a \alpha) = 1.$$
Then $$|[\{\rho,(x\in C)\} \cup SR_{k} ]\cap \widetilde R_{k}| = \infty.$$
\end{lemma}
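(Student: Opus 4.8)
The plan is to obtain this lemma from Lemma~\ref{PosledovatelnostMnojestv}. View $\rho(\cdot,\cdot,\alpha)$ as a digraph $\Gamma$ on the vertex set $C$, with an arc $d\to e$ exactly when $\rho(de\alpha)=1$. Then the sets $D_i$ occurring in Lemma~\ref{PosledovatelnostMnojestv}, with $D_0=\{a\}$, are precisely the sets of vertices reachable from $a$ by a walk of length exactly $i$; since $D_{i+1}$ is a function of $D_i$, the sequence $(D_i)$ is eventually periodic, and the only hypothesis of Lemma~\ref{PosledovatelnostMnojestv} that is not already among our assumptions is that $(D_i)$ does not become constant (all the rest, namely $C\subseteq E_k$, $\alpha\in E_k^{n-2}$, $\rho\in R_k^n$, $\{a\}\subseteq C$, and $\forall d\in C\ \exists\beta\ \rho(dd\beta)=1$, we already have).

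The key point is the symmetric arc. From $\rho(ab\alpha)=\rho(ba\alpha)=1$ the digraph $\Gamma$ contains the directed $2$-cycle $a\to b\to a$, and since $\rho(dd\alpha)=0$ for all $d\in C$ it has no loops; a straightforward induction then gives $a\in D_{2j}$ and $b\in D_{2j+1}$ for all $j$, while $a\notin D_1$ and $b\notin D_0$. Hence we have a dichotomy: either $a$ lies on no odd closed walk of $\Gamma$ --- equivalently, the strongly connected component of $a$, which contains $b$, is bipartite (of period~$2$, since it already carries a $2$-cycle) --- or $a$ does lie on an odd closed walk.

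In the bipartite case one finishes immediately: then there is no even walk from $a$ to $b$ (otherwise, prefixing the arc $b\to a$ would yield an odd closed walk through $a$) and no odd closed walk through $a$, so $a\in D_{2j}\setminus D_{2j+1}$ and $b\in D_{2j+1}\setminus D_{2j}$ for every $j$; thus $D_j\neq D_{j+1}$ for all $j$, Lemma~\ref{PosledovatelnostMnojestv} applies, and it gives $|[\{\rho,(x\in C)\}\cup SR_k]\cap\widetilde R_k|=\infty$.

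The remaining case --- $a$ lies on an odd closed walk, so that the sequence $(D_i)$ actually stabilizes and Lemma~\ref{PosledovatelnostMnojestv} can no longer be applied to these data --- is the main obstacle. Here I would argue directly through Lemma~\ref{arnostSush}: using a fixed odd closed walk through $a$ together with the loop-tuples $\beta_d$, construct for every large $N$ a predicate $\delta_N\in[\{\rho,(x\in C)\}\cup SR_k]$ encoding a length-$N$ walk of $\Gamma$ anchored at $a$, augmented by a second ``shadow'' walk running through the two elements $a,b$ (realized via the symmetric arc and the loops $\rho(aa\beta_a)=\rho(bb\beta_b)=1$), chosen so that the constant block-assignment $\alpha,\dots,\alpha$ is forbidden for $\delta_N$ while replacing any single block by a suitable $\beta$ makes $\delta_N$ satisfiable; Lemma~\ref{arnostSush} then yields essential predicates in $[\{\rho,(x\in C)\}\cup SR_k]$ of unbounded arity. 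The delicate point, and where I expect the real difficulty, is to design this shadow so that the ``all-$\alpha$ forbidden, one change enough'' pattern survives even though $\Gamma$, being non-bipartite here, admits closed walks of all large lengths, while keeping $\delta_N$ expressible with only $\rho$, $(x\in C)$ and the constant predicates of $SR_k$ (which is what rules out simply restricting attention to the subgraph on $\{a,b\}$).
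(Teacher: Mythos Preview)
Your bipartite case is fine, but the non-bipartite case is not a proof: you only gesture at building predicates $\delta_N$ via a ``shadow walk'' and yourself call the crucial step ``the delicate point'' without carrying it out. Since the non-bipartite case can certainly occur (e.g.\ a symmetric triangle on $\{a,b,c\}$), this is a genuine gap, not a routine verification.

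The paper avoids this obstacle with two ideas you are missing. First, it \emph{symmetrizes}: instead of your digraph $\Gamma$, it works with the predicate $\rho'(x_1,x_2,\ldots)=\rho(x_1,x_2,\ldots)\wedge\rho(x_2,x_1,\ldots)$ and defines
\[
D_0=\{a\},\qquad D_{i+1}=\{e\in C\mid \exists d\in D_i:\ \rho(de\alpha)=\rho(ed\alpha)=1\},
\]
so that each $D_i$ is definable as a unary predicate $\rho_i\in[\{\rho,(x\in C)\}\cup SR_k]$. Second, it argues by \emph{minimality of $C$}: assume $C$ is a smallest set for which the hypotheses hold but the conclusion fails, and prove by induction that $D_i\cap D_{i+1}=\varnothing$ for all $i$. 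The base case is $a\notin D_1$. For the inductive step, if $c\in D_i\cap D_{i+1}$ then there is $d\in D_i$ with a symmetric edge $c\leftrightarrow d$; since $D_{i-1}\neq\varnothing$ and $D_{i-1}\cap D_i=\varnothing$ we have $D_i\subsetneq C$, so the pair $(c,d)$ inside the smaller set $D_i$ (which equals the definable set $\rho_i$) contradicts the minimality of $C$. With $D_i\cap D_{i+1}=\varnothing$ and all $D_i\neq\varnothing$ established, $D_i\neq D_{i+1}$ for every $i$, and Lemma~\ref{PosledovatelnostMnojestv} applied to $\rho'$ finishes the proof. The minimality trick is exactly what dissolves your non-bipartite case: rather than analyzing odd closed walks directly, it pushes the problem down to a strictly smaller $C$.
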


\begin{proof}

Assume the converse.
Let $C$ be a minimal set such that
conditions of the lemma hold but
$|[\{\rho,(x\in C)\} \cup SR_{k} ]\cap \widetilde R_{k}|<\infty.$
Suppose
$$D_{0} = \{a\},
D_{i+1} = \{e\in C | \exists d \in D_{i}\; \rho(d e \alpha) \wedge \rho(e d \alpha) =1 \}.$$
It follows from the conditions that
$a\in D_{i}$ for even $i,$
$b\in D_{i}$ for odd $i.$
Hence $D_{i} \neq \varnothing $ for every~$i.$

Let $\rho_{0}(x) = 1 \Longleftrightarrow x = a.$ For $i\ge 0$ suppose
$$\rho_{i+1}(x) = \exists z \; (x\in C)\wedge \rho_{i}(z)\wedge \rho ( x z \alpha) \wedge \rho ( z x \alpha),$$
It is easy to check that  for every $i$
$$\rho_{i}(x) = 1 \Longleftrightarrow x \in D_{i}.$$

Let us prove by induction that $D_{i}\cap D_{i+1} = \varnothing$ for every $i.$
Since $a \notin D_{1},$ this is true for $i=0.$
Let us prove this for $i\ge 1.$
Assume the converse.
Suppose
$D_{i}\cap D_{i+1}\neq \varnothing.$
Let $c\in D_{i}\cap D_{i+1}.$
By the inductive assumption,
$D_{i-1}\cap D_{i}=\varnothing.$
Since $D_{i-1}\neq \varnothing,$ then $D_{i} \neq C.$

Since $c\in D_{i+1},$ there exists $d\in D_{i},$
such that $\rho(d c \alpha) = \rho(c d\alpha) = 1.$
By the assumption,
$C$ is a minimal set such that
conditions of the lemma hold but
$|[\{\rho,(x\in C)\} \cup SR_{k} ]\cap \widetilde R_{k}|<\infty.$
Since $D_{i}\subset C,$ $c,d\in D_{i},$ $\rho(c d \alpha) = \rho(d c \alpha)=1,$
we have $$|[\{\rho,\rho_{i}\} \cup SR_{k} ] \cap \widetilde R_{k}| = \infty.$$
But $$[\{\rho,\rho_{i}\} \cup SR_{k} ] \subseteq [\{\rho,(x\in C)\} \cup SR_{k} ].$$
Hence $|[\{\rho,(x\in C)\} \cup SR_{k} ] \cap \widetilde R_{k}| = \infty$ and the lemma is proved.

Thus, $D_{i}\cap D_{i+1} = \varnothing$ for every $i.$
Hence we can use Lemma \ref{PosledovatelnostMnojestv} for the predicate
$\rho(x_{1},x_{2},x_{3},\ldots,x_{n})\wedge \rho(x_{2},x_{1},x_{3},\ldots,x_{n})$
to complete the proof.

\end{proof}

\begin{lemma}\label{EstCykl}

Suppose
$C\subseteq E_{k},$
$\alpha \in E_{k}^{n-2},$
$\rho \in R_{k}^{n},$
$a_{0},a_{1},\ldots,a_{m}\in C,$ $a_{0} = a_{m},$
$$\forall i\in \{0,1,\ldots,m-1\} \; \rho(a_{i} a_{i+1}\alpha) =  1,$$
$$\forall d \in C \; \rho(d d \alpha) =  0,$$
$$\forall d \in C\; \exists \beta \in E_{k}^{n-2} \; \rho(d d \beta) =  1.$$
Then $$|[\{\rho,(x\in C)\} \cup SR_{k} ]\cap \widetilde R_{k}| = \infty.$$
\end{lemma}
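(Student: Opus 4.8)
The plan is to reduce to Lemma \ref{EstSymmetrichnoeRebro} by exhibiting, from the directed cycle $a_0 \to a_1 \to \cdots \to a_m = a_0$, a single edge that is traversable in both directions. Consider the predicate $\rho'(x_1,x_2,x_3,\ldots,x_n) = \rho(x_1,x_2,x_3,\ldots,x_n) \wedge \rho(x_2,x_1,x_3,\ldots,x_n)$, which lies in $[\{\rho\}\cup SR_k]$. If it happens that $\rho'(a_i a_{i+1}\alpha) = 1$ for some $i$, that is $\rho(a_i a_{i+1}\alpha) = \rho(a_{i+1} a_i \alpha) = 1$, then taking $a := a_i$, $b := a_{i+1}$ we are exactly in the hypotheses of Lemma \ref{EstSymmetrichnoeRebro} (the diagonal conditions $\rho(d d \alpha)=0$ and $\exists \beta\; \rho(d d \beta)=1$ for $d\in C$ are shared verbatim), and we conclude $|[\{\rho,(x\in C)\}\cup SR_k]\cap \widetilde R_k| = \infty$ immediately.

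So the substantive case is when no edge of the cycle is symmetric, i.e. $\rho(a_{i+1}a_i\alpha) = 0$ for every $i$. Here I would invoke Lemma \ref{PosledovatelnostMnojestv} with the same $C$, $\alpha$, $\rho$, and with the starting set $D_0 = \{a_0\}$, $D_{i+1} = \{e\in C \mid \exists d\in D_i\; \rho(d e \alpha) = 1\}$. The diagonal hypotheses of Lemma \ref{PosledovatelnostMnojestv} are again exactly those given here. What must be checked is the non-stabilization condition $\forall m\ge 1\;\exists j\ge m\;(D_j \neq D_{j+1})$. Because $a_i \in D_i$ for all $i$ (following the cycle, and the cycle is infinite when read cyclically, so $a_{i \bmod m} \in D_i$), every $D_i$ is nonempty; the point is to rule out eventual equality $D_j = D_{j+1} = \cdots$. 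Suppose for contradiction $D_j = D_{j+1}$ for all $j \ge m_0$; call this stable set $D$. Then $D$ is closed under the forward edge relation: for every $d\in D$ and every $e\in C$ with $\rho(d e\alpha)=1$ we have $e\in D$. Now run the cycle: since some $a_i \in D$ (indeed $a_{m_0 \bmod m}\in D_{m_0}\subseteq$ the stable set after one more step, or one argues directly that the cyclic sequence eventually enters $D$), forward-closedness forces all of $a_0,\ldots,a_m$ into $D$, and in particular $a_0 \in D$. But then, because $a_0$ has a forward edge to $a_1$ which has a forward edge to $a_2$, etc., and the cycle returns to $a_0$, we get that $a_0 \in D_{j+1}$ whenever $a_0 \in D_j$ — which is consistent, so this alone is not yet a contradiction. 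The actual contradiction I expect to extract is of a parity/reachability flavour: one shows the $D_i$ cannot stabilize because stabilization would make the forward-reachability relation on $D$ symmetric (every vertex of $D$ lies on a cycle, hence both $\rho(de\alpha)$ and a return path exist), and combined with $\rho(dd\alpha)=0$ one produces, after composing the predicate along a return path, a symmetric edge in the sense of Lemma \ref{EstSymmetrichnoeRebro} applied to a suitable composed predicate in $[\{\rho,(x\in C)\}\cup SR_k]$ — contradicting the assumed finiteness. Either way, non-stabilization holds, Lemma \ref{PosledovatelnostMnojestv} applies, and the conclusion follows.

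The main obstacle is precisely verifying the non-stabilization condition of Lemma \ref{PosledovatelnostMnojestv}: the cycle gives us $a_{i\bmod m}\in D_i$, so the sets are nonempty, but showing they do not become eventually constant requires combining the cyclic structure with the minimality/finiteness assumption (as in the proof of Lemma \ref{EstSymmetrichnoeRebro}) or else directly building a symmetric edge. I would handle it by a minimal-counterexample argument on $C$ parallel to the one in Lemma \ref{EstSymmetrichnoeRebro}: if $D_j$ stabilizes to a proper subset $D\subsetneq C$, restrict to $D$ and recurse; if it stabilizes to all of $C$, then in particular there is a back-edge along the stabilized reachability, producing a symmetric pair and invoking Lemma \ref{EstSymmetrichnoeRebro}. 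This exhausts the cases and completes the proof.
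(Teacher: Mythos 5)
Your overall dichotomy matches the paper's: either the sets $D_i$ (with $D_0=\{a_0\}$, $D_{i+1}=\{e\in C \mid \exists d\in D_i\; \rho(de\alpha)=1\}$) fail to stabilize, in which case Lemma~\ref{PosledovatelnostMnojestv} finishes the proof, or they do stabilize and one must produce a symmetric edge for a suitable composed predicate and invoke Lemma~\ref{EstSymmetrichnoeRebro}. You also correctly note that stabilization \emph{can} occur, so the second branch is unavoidable. But your handling of that branch has a genuine gap. You say stabilization ``would make the forward-reachability relation on $D$ symmetric'' and that composing along a return path gives a symmetric pair. The problem is that Lemma~\ref{EstSymmetrichnoeRebro} applied to a composed predicate $\sigma_s$ (paths of length $s$) requires not just a symmetric pair $\sigma_s(ab\alpha)=\sigma_s(ba\alpha)=1$ but also the diagonal condition $\sigma_s(dd\alpha)=0$ for \emph{every} $d\in C$. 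This is exactly what breaks: once $D_i$ stabilizes, $a_0\in D_{m'}$ for all large $m'$, so $\sigma_s(a_0 a_0 \alpha)=1$ for all sufficiently large $s$, and the composed predicate has nontrivial diagonal. Your ``minimal counterexample / recurse on a smaller $C$'' sketch does not fix this; the diagonal failure happens for the full $C$ already.

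The paper resolves this with a specific arithmetic device you are missing: it considers the set $M_0$ of all cycle lengths (lengths $s$ with $\sigma_s(dd\alpha)=1$ for some $d\in C$), notes that all sufficiently large $s$ lie in $M_0$, and takes the \emph{minimal} $r$ with $2^r\in M_0$. Halving a cycle of length $2^r$ gives $\sigma_{2^{r-1}}(b_0,b_{2^{r-1}}\alpha)=\sigma_{2^{r-1}}(b_{2^{r-1}},b_0\alpha)=1$, and the minimality of $r$ forces $2^{r-1}\notin M_0$, i.e.\ $\sigma_{2^{r-1}}(dd\alpha)=0$ for all $d\in C$. That is precisely the missing diagonal hypothesis. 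Without some analogue of this minimal-power-of-two trick, your appeal to Lemma~\ref{EstSymmetrichnoeRebro} in the stabilization case is unjustified. (Your preliminary special case where some edge of the given cycle is already symmetric is fine, but it is not needed and does not cover the hard case.)
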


\begin{proof}

Let
$$D_{0} = \{a_{0}\},
D_{i+1} = \{e\in C | \exists d \in D_{i}\; \rho(d e \alpha)  =1 \}.$$
It can be easily checked that $D_{i}\neq \varnothing$ for every $i.$
Assume that for every $m_{0}$ there exists $m'\ge m_{0}$ such that $D_{m'}\neq D_{m'+1};$
then we can use Lemma \ref{PosledovatelnostMnojestv} to complete the proof.

Suppose there exists $m_{0}$ such that $D_{m'} = D_{m_{0}}$  for every $m'>m_{0}.$
It is easy to check that $a_{0}\in D_{m\cdot i}$ for every $i\ge 0.$
Therefore, $a_{0}\in D_{m'}$  for every $m'>m_{0}.$
Let
$$\sigma(x_{1}, x_{2},y_{1},\ldots,y_{n-2}) = \rho(x_{1}, x_{2},y_{1},\ldots,y_{n-2}) \wedge (x_{1}\in C) \wedge 
(x_{2}\in C),$$
$$\sigma_{s}(x_{0},x_{s},y_{1},\ldots,y_{n-2}) =
\exists x_{1} \ldots\exists x_{s-1}\;
\bigwedge \limits_{1\le i\le s}
\sigma(x_{i-1}, x_{i},y_{1},\ldots,y_{n-2}).$$
Roughly speaking, this means that $\sigma_{s}$ takes value 1 iff there exists a path of length $s$ from
$x_{0}$ to $x_{s}.$
Let
$$M_{0} = \{ s\in \mathbb N| \;\exists d\in C:\; \sigma_{s}(d d \alpha) = 1\}.$$
Roughly speaking, $M_{0}$ is the set of all numbers $s$ such that
there exists a cycle of length $s.$
By the condition of the lemma, $1\notin M_{0}.$
Since $a_{0}\in D_{m'}$  for every $m'>m_{0},$
we get
$\sigma_{s}(a_{0} a_{0} \alpha) = 1$
for every $s>m_{0}.$ Then $s\in M_{0}$ for every $s>m_{0}.$
Let $r\ge 0$ be the minimal number such that $2^{r}\in M_{0}.$
Hence, there exist
$b_{0},b_{1},\ldots,b_{2^{r}}\in C$
such that  $b_{0} = b_{2^{r}}$ and for every $i\in \{0,1,\ldots,2^{r}-1\}$
$$\sigma(b_{i} b_{i+1}\alpha) =  1.$$
Therefore, 
$$\sigma_{2^{r-1}}(b_{0},b_{2^{r-1}}) =  \sigma_{2^{r-1}}(b_{2^{r-1}}, b_{0}) =1.$$
Since $r$ is minimal, we have 
$$\forall d \in C \; \sigma_{2^{r-1}}(d d \alpha) =  0.$$
Hence by Lemma \ref{EstSymmetrichnoeRebro},
we have
$$|[\{\sigma_{2^{r-1}},(x\in C)\} \cup SR_{k} ]\cap \widetilde R_{k}| = \infty.$$
This completes the proof.


%
%

\end{proof}

\begin{lemma}\label{dobavleniePredikatovSR}

Suppose $\rho \in R_{k},$ then 
$$|[\{\rho\}]\cap \widetilde R_{k}| < \infty \Longleftrightarrow
|[\{\rho\}\cap SR_{k}]\cap \widetilde R_{k}| < \infty.$$

\end{lemma}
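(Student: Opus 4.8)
\textbf{Proof plan for Lemma \ref{dobavleniePredikatovSR}.}

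The statement claims that $\rho$ admits a near-unanimity function (equivalently $|[\{\rho\}]\cap\widetilde R_k|<\infty$) if and only if the same holds for $[\{\rho\}\cap SR_k]$. Wait — I should read the statement more carefully: it is $[\{\rho\}\cap SR_k]$, but that intersection is almost always empty since $\rho$ has arbitrary arity while $SR_k$ consists of unary ``point'' predicates. I believe the intended statement is
$$|[\{\rho\}]\cap \widetilde R_{k}| < \infty \Longleftrightarrow |[\{\rho\}\cup SR_{k}]\cap \widetilde R_{k}| < \infty,$$
which matches the displayed equivalences on page involving $SR_k$ and is what is actually used when passing from $G$ to $G\cup SR_k$ in the proof of Theorem \ref{maintheorem}. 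I will prove this version.

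\textbf{The easy direction.} If $|[\{\rho\}\cup SR_k]\cap\widetilde R_k|<\infty$, then since $[\{\rho\}]\subseteq[\{\rho\}\cup SR_k]$ we get $[\{\rho\}]\cap\widetilde R_k\subseteq [\{\rho\}\cup SR_k]\cap\widetilde R_k$, so the left-hand set is finite as well. This is immediate from monotonicity of $[\;]$.

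\textbf{The hard direction.} Assume $|[\{\rho\}]\cap\widetilde R_k|<\infty$; I must bound the arities of essential predicates in $[\{\rho\}\cup SR_k]$. The plan is to use Corollary \ref{NUFmainCo} together with Theorem \ref{NUFmain}: $|[\{\rho\}]\cap\widetilde R_k|<\infty$ is equivalent to $\Pol(\{\rho\})\cap NUF_k\neq\varnothing$, i.e. $\rho$ is preserved by some near-unanimity function $f$ of some arity $n+1$. But every near-unanimity function automatically preserves every predicate of $SR_k$ (if all inputs on a ``near-unanimity pattern'' lie in a singleton, the output is that singleton element; this is the remark made just before the displayed multiline equivalence in Section 4). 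Hence $f\in\Pol(\{\rho\}\cup SR_k)\cap NUF_k^{n+1}$, so by Theorem \ref{NUFmain} we get $ar([\{\rho\}\cup SR_k]\cap\widetilde R_k)\le n<\infty$, and in particular this set is finite (for each fixed arity $\le n$ there are only finitely many predicates over $E_k$). This closes the equivalence.

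\textbf{Main obstacle.} There is essentially no obstacle once one invokes the Galois-connection machinery: the real content is the observation that near-unanimity functions are ``transparent'' to the predicates in $SR_k$, which has already been recorded in the text. The only point requiring a line of care is the finiteness claim itself — from a bound on arity one must note that $\bigcup_{j\le n}\widetilde R_k^{\,j}$ is finite because each $R_k^{\,j}$ is finite — but this is routine. If instead one wanted a direct (non-Galois) argument, one could show that adjoining the predicates $\rho_{=,a}$ only produces new essential predicates by the substitution/strike operations already analysed in Lemmas \ref{razlojenieNesushestvennogo}--\ref{arnostSush}, and that these operations cannot increase the maximal essential arity beyond what $[\{\rho\}]$ already achieves; but the Galois route via Theorem \ref{NUFmain} is cleaner and is the one I would write.
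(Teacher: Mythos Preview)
Your reading of the typo is correct (the intended statement has $\cup$, not $\cap$), and your proof via Corollary~\ref{NUFmainCo}/Theorem~\ref{NUFmain} together with the observation that every near-unanimity function preserves each $\rho_{=,a}\in SR_k$ is exactly the paper's argument. The paper writes the single chain of equivalences $|[G]\cap\widetilde R_k|<\infty \Leftrightarrow \Pol(G)\cap NUF_k\neq\varnothing \Leftrightarrow \Pol(G\cup SR_k)\cap NUF_k\neq\varnothing \Leftrightarrow |[G\cup SR_k]\cap\widetilde R_k|<\infty$ rather than splitting into an easy and a hard direction, but the content is the same.
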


\begin{proof}

By definition, every near unanimity function preserves every predicate form $SR_{k}.$
Hence, by Corollary \ref{NUFmain} we have
\begin{multline*}|[G] \cap \widetilde R_{k}|<\infty \Longleftrightarrow
\Pol(G)\cap NUF_{k}\neq \varnothing \Longleftrightarrow  \\  \Longleftrightarrow
\Pol(G\cup SR_{k})\cap NUF_{k}\neq \varnothing \Longleftrightarrow
|[G\cup SR_{k}] \cap \widetilde R_{k}|<\infty.
\end{multline*}

\end{proof}

\begin{theorem}\label{GlavnoePreobrazovanie}

Suppose $\rho
\in R_{k}^{n+2},$
$\rho_{0}(y) = \exists x_{1}\exists x_{2}\ldots \exists x_{n}\; \rho(y,y,x_{1},x_{2}\ldots,x_{n}),$
$|[\{\rho\}]\cap \widetilde R_{k}| < \infty.$
Then
\begin{multline*}\exists y \; \rho(y,y,x_{1},\ldots,x_{n}) = \\ =
\exists y_{0} \exists y_{1}\ldots
\exists y_{k}\;
(\bigwedge\limits_{0\le i\le k-1}
\rho(y_{i},y_{i+1},x_{1},\ldots,x_{n}))
\wedge
(\bigwedge\limits_{0\le i\le k}
\rho_{0}(y_{i})).
\end{multline*}

\end{theorem}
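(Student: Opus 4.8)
The plan is to prove the two inclusions separately. The inclusion $\subseteq$ of the claimed equation is the easy direction: if $(x_1,\dots,x_n)$ satisfies the left-hand side, there is some $a$ with $\rho(a,a,x_1,\dots,x_n)=1$; setting $y_0=y_1=\dots=y_k=a$ witnesses every conjunct on the right, since each $\rho(y_i,y_{i+1},\dots)$ becomes $\rho(a,a,\dots)=1$ and each $\rho_0(y_i)$ holds because $a$ itself witnesses the existential quantifiers defining $\rho_0$. So the content is entirely in the inclusion $\supseteq$.

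For $\supseteq$, I would fix a tuple $(x_1,\dots,x_n)=\alpha$ satisfying the right-hand side, so there are $a_0,a_1,\dots,a_k\in E_k$ with $\rho(a_i,a_{i+1},\alpha)=1$ for $0\le i\le k-1$ and $\rho_0(a_i)=1$ for $0\le i\le k$. The last condition means for each $i$ there is $\beta_i\in E_k^{n}$ with $\rho(a_i,a_i,\beta_i)=1$. I need to produce a single $d$ with $\rho(d,d,\alpha)=1$. The idea is to let $C=\{a\in E_k : \exists \beta\; \rho(a,a,\beta)=1\}$ (so every $a_i\in C$), replace $\rho$ by $\rho'=\rho\wedge(x_1\in C)\wedge(x_2\in C)$, and argue by contradiction: assume no such $d$ exists, i.e. $\rho(d,d,\alpha)=0$ for every $d\in C$. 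Then I am exactly in the hypotheses of the cycle machinery: I have a closed walk $a_0\to a_1\to\dots\to a_k=?$—wait, the $a_i$ need not return, but having $k+1$ vertices $a_0,\dots,a_k$ in $C$ with consecutive $\rho$-edges and $|C|\le k$ forces a repeat $a_i=a_j$ with $i<j$, hence a genuine cycle $a_i\to a_{i+1}\to\dots\to a_j=a_i$ inside $C$. Together with $\rho(d,d,\alpha)=0$ for all $d\in C$ and $\rho(d,d,\beta_d)=1$ for all $d\in C$, Lemma~\ref{EstCykl} applies and yields $|[\{\rho,(x\in C)\}\cup SR_k]\cap\widetilde R_k|=\infty$. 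But $(x\in C)\in[\{\rho\}\cup SR_k]$ since $C$ is a section of $\rho$ obtained by existential quantification (and the elements outside $C$ can be excluded using predicates from $SR_k$), so $[\{\rho,(x\in C)\}\cup SR_k]=[\{\rho\}\cup SR_k]$, and by Lemma~\ref{dobavleniePredikatovSR} this contradicts $|[\{\rho\}]\cap\widetilde R_k|<\infty$. Hence some $d\in C$ has $\rho(d,d,\alpha)=1$, which gives the left-hand side.

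I expect the main obstacle to be the bookkeeping around which predicates actually lie in $[\{\rho\}]$: one must check carefully that $(x\in C)$ and the intermediate predicates $\rho_0$, $\rho'$ are all expressible over $\{\rho\}\cup SR_k$ (using existential quantification, conjunction, and the unary constants $\rho_{=,a}$), so that the "infinitely many essential predicates" conclusion from Lemma~\ref{EstCykl} really transfers back to $[\{\rho\}]$ and contradicts finiteness. The graph-theoretic core—extracting a cycle in $C$ from a walk of length $k$ on at most $k$ vertices and invoking Lemma~\ref{EstCykl}—is routine once the hypotheses of that lemma are matched, in particular the two conditions $\rho(d,d,\alpha)=0$ for all $d\in C$ (our contradiction assumption) and $\exists\beta\;\rho(d,d,\beta)=1$ for all $d\in C$ (the definition of $C$). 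The reason the bound is exactly $k$ (giving $y_0,\dots,y_k$, i.e. $k+1$ vertices and $k$ edges) is precisely the pigeonhole step forcing a repeated vertex in $C\subseteq E_k$.
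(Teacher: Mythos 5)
Your proposal is correct and follows essentially the same route as the paper's own proof: show the left side implies the right side trivially, and for the converse assume some $\alpha$ satisfies the right side but not the left, extract $a_0,\dots,a_k$ in $C=\{d:\rho_0(d)=1\}$ with consecutive $\rho$-edges, pigeonhole to get a cycle in $C\subseteq E_k$, invoke Lemma~\ref{EstCykl} (noting $(x\in C)=\rho_0\in[\{\rho\}]$), and reach a contradiction with finiteness via Lemma~\ref{dobavleniePredikatovSR}. The only cosmetic difference is that you introduce an auxiliary $\rho'$ which you then don't use; the paper applies the cycle lemma directly to $\rho$.
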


\begin{proof}

Assume the converse.
By $\rho_{1}(x_{1},\ldots,x_{n})$ we denote the predicate in the left side of the formula from the condition.
By $\rho_{2}(x_{1},\ldots,x_{n})$ we denote the predicate in the right side.
Let us prove that $\rho_{1} = \rho_{2}.$

Let $C = \{ d\in E_{k}|\; \rho_{0}(d) = 1\}.$
It can be easily checked that
$\rho_{1}\le \rho_{2}.$
Assume that $\rho_{2}\neq \rho_{1},$
then there exists $\alpha\in E_{k}^{n}$
such that $\rho_{2}(\alpha) = 1,$
$\rho_{1}(\alpha) = 0.$
Therefore there exist $a_{0},a_{1},\ldots,a_{k}\in C$
such that $\rho(a_{i}a_{i+1}\alpha) = 1$ for every $i\in \{0,1,\ldots,k-1\}.$
Obviously, $a_{i} = a_{j}$ for some $i,j\in \{0,1,\ldots,k\}.$
Since $\rho_{1}(\alpha) = 0,$ we have
$\rho(d d \alpha) = 0$ for every $d\in E_{k}.$
By Lemma \ref{EstCykl}, we have
$|[\{\rho\} \cup SR_{k} ]\cap \widetilde R_{k}| = \infty.$
Hence, by Lemma \ref{dobavleniePredikatovSR},
$|[\{\rho\}]\cap \widetilde R_{k}| = \infty.$
This completes the proof.

\end{proof}

\section{Transformations of formulas}%

Suppose $G\subseteq R_{k},$ $|G|<\infty,$ $|[G]\cap \widetilde R_{k}|<\infty,$
$[G]\cap (R_{k}^{1}\cup R_{k}^{2}) \subseteq G.$
In this section, $G$ always satisfies these conditions.

By $Formulas(G)$ we denote the set of all formulas of the following form
$$\Phi = \exists
y_{1}\ldots \exists y_{l} \;
\rho_{1}(z_{1,1},\ldots,z_{1,n_{1}})\wedge \ldots \wedge
\rho_{s}(z_{s,1},\ldots,z_{s,n_{s}}),$$
where $\rho_{1},\ldots,\rho_{s}\in G.$
Variables $y_{1},\ldots,y_{l}$ are called \emph{bound} variables.
All other variables are called \emph{unbound}.

Suppose $\{j_{1},\ldots,j_{r}\}\subseteq \{1,2,\ldots,l\},$
$\{i_{1},\ldots,i_{p}\}\subseteq \{1,2,\ldots,s\}.$
Then the formula
$$\Phi' = \exists
y_{j_{1}}\ldots \exists y_{j_{r}} \;
\rho_{i_{1}}(z_{i_{1},1},\ldots,z_{i_{1},n_{i_{1}}})\wedge \ldots \wedge
\rho_{i_{p}}(z_{i_{p},1},\ldots,z_{i_{p},n_{i_{p}}})$$
is called a subformula of $\Phi.$
The subformula $\rho_{i}(z_{1,1},\ldots,z_{1,n_{1}})$ is called
an \emph{occurrence} of the formula $\Phi.$

We suppose that existential quantifiers are always in the left part of the formula.
Also we suppose that if
$\Phi_{1}$ and $\Phi_{2}$ are formulas
then all bound variables in the formula
$\Phi_{1}\wedge \Phi_{2}$ are different.

Suppose $\Phi\in Formulas(G).$ Then by $Var(\Phi)$ we denote the set of all variables in $\Phi.$
By $UVar(\Phi)$ we denote the set of all unbound variables in $\Phi.$

Suppose 
$m\ge 1,$ $z_{0},z_{1},\ldots,z_{m}\in Var(\Phi),$
$\Psi_{1},\ldots,\Psi_{m}$ are occurrences of formula $\Phi.$
Then the sequence $z_{0}\Psi_{1}z_{1}\Psi_{2}\ldots \Psi_{m}z_{m}$
is called a \emph{path} from $z_{0}$ to $z_{m}$ in $\Phi$
iff the following conditions hold:



\begin{enumerate}

\item $z_{i}\in Var(\Psi_{i+1})$ 
for every $i\in \{0,1,\ldots,m-1\};$

\item $z_{i}\in Var(\Psi_{i})$ for every $i\in \{1,\ldots,m\};$


\item variables $z_{0},z_{1},\ldots,z_{m-1},z_{m}$ are different except may be $z_{0}$ and $z_{m}.$

\end{enumerate}


We say that $m$ is the \emph{length} of the path.

We say that variables $x$ and $y$ are \emph{connected in $\Phi$} if there exists a path in $\Phi$
from $x$ to $y.$
We say that $\Phi$ is \emph{connected} if every two variables of $\Phi$ are connected.

\begin{lemma} \label{svyasnyaformula}

Suppose $\Phi\in Formulas(G),$
$\Phi$ realizes an essential predicate $\rho,$
then there exists a connected subformula $\Phi'$ of $\Phi$
that realizes predicate $\rho.$

\end{lemma}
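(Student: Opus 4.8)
The plan is to show that among all connected components of $\Phi$, exactly one is ``responsible'' for the predicate $\rho$, and that this component alone realizes $\rho$.

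First I would make precise the notion of a connected component of a formula. Given $\Phi = \exists y_1 \ldots \exists y_l\; \rho_1(\ldots) \wedge \ldots \wedge \rho_s(\ldots)$, the occurrences $\rho_1(\ldots), \ldots, \rho_s(\ldots)$ together with the variables $Var(\Phi)$ should be partitioned by the equivalence relation ``$x$ and $y$ are connected'' (note this is an equivalence relation on $Var(\Phi)$: reflexivity and symmetry are immediate, and transitivity follows by concatenating paths and then removing cycles to restore condition 3). Each occurrence $\rho_i(z_{i,1},\ldots,z_{i,n_i})$ has all of its variables in a single class, since any two of them are connected by a path of length $1$. So $\Phi$ splits as a conjunction $\Phi = \exists(\ldots)\; \Phi_1 \wedge \Phi_2 \wedge \ldots \wedge \Phi_t$, where each $\Phi_j$ is a connected subformula, the variable sets of distinct $\Phi_j$ are disjoint, and each bound variable $y_i$ of $\Phi$ is bound in exactly the one $\Phi_j$ containing it.

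Next I would use this splitting to write $\rho$ as a conjunction. Let $x_1, \ldots, x_n$ be the unbound variables of $\Phi$ (the arguments of $\rho$), and for each component $\Phi_j$ let $\sigma_j$ be the predicate on the unbound variables of $\Phi_j$ obtained by existentially quantifying away that component's bound variables. Since the components share no variables and the bound variables are distributed among them, the existential quantifier distributes over the conjunction, and we get
$$\rho(x_1,\ldots,x_n) = \sigma_1(\ldots) \wedge \sigma_2(\ldots) \wedge \ldots \wedge \sigma_t(\ldots),$$
where each $\sigma_j$ depends only on a subset of $\{x_1,\ldots,x_n\}$. Now if $t \ge 2$ and every $\sigma_j$ had arity $< n$, then this would exhibit $\rho$ as a member of $\AND(\{\sigma_1,\ldots,\sigma_t\})$ with all factors of smaller arity — contradicting that $\rho$ is essential. (One must first discard any component $\Phi_j$ that has no unbound variable at all; such a component contributes only a $0$-ary predicate, which is either identically true, hence deletable, or identically false, in which case $\rho$ itself is the $0$-ary false predicate and the statement is trivial. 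One should also handle repeated occurrences of the same $x_i$ among different components by noting that a variable that occurs in $\Phi$ at all lies in exactly one component.) Hence some component $\Phi_j$ must have all $n$ of the unbound variables among its variables, i.e. $ar(\sigma_j) = n$; since $\sigma_j \ge \rho$ coordinatewise and $\rho \le \sigma_i$ for the other factors as well, but the conjunction equals $\rho$, we actually get $\sigma_j = \rho$ once every other $\sigma_i$ is identically $1$ on the relevant coordinates. The cleanest argument: pick an essential tuple $(a_1,\ldots,a_n)$ for $\rho$ (it exists by Lemma~\ref{sushnabor}); since $\rho(a_1,\ldots,a_n) = 0$, some factor $\sigma_j$ is $0$ there, and since $(a_1,\ldots,a_n)$ is essential, $\sigma_j$ cannot omit any coordinate $x_i$ — otherwise flipping that coordinate could not change $\sigma_j$ from $0$ to something making $\rho = 1$. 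So $ar(\sigma_j) = n$ and $\Phi' := \Phi_j$ is a connected subformula of $\Phi$. Finally, $\rho \le \sigma_j$ and, because $\Phi_j$ is one of the conjuncts whose conjunction is $\rho$, we have $\rho \le \sigma_j$; combined with the fact that every essential tuple of $\rho$ is also killed by $\sigma_j$ and that $\sigma_j \le \widetilde{\rho}$... the simplest finish is: if $\rho < \sigma_j$ strictly, take a tuple with $\sigma_j = 1$, $\rho = 0$; then some other $\sigma_i = 0$ there, but one shows by the essential-tuple structure that this forces a contradiction with $\rho$ being essential, after possibly iterating. I would instead just argue directly that $\sigma_j = \rho$ by showing all other $\sigma_i$ are identically $1$: if some $\sigma_i$ were not, then dropping $\Phi_j$ and keeping the rest, together with $\sigma_i$ having arity $<n$, re-derives the $\AND$-decomposition into smaller-arity predicates (padding with dummy variables as needed to make arities match is illegal for essentiality, so the contradiction stands only if genuinely $ar < n$, which holds).

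The main obstacle I expect is the bookkeeping in the last step: carefully ruling out the degenerate cases (components with no free variable, a free variable of $\rho$ that literally does not occur in $\Phi$, the $0$-ary case) and then getting from ``some component carries all free variables'' to ``that component realizes exactly $\rho$, not a strictly larger predicate.'' The essential-tuple criterion of Lemma~\ref{sushnabor} is the right tool: it simultaneously forces the good component to contain every free variable and prevents any leftover component from having a smaller-arity proper factor, so the whole argument is really just an unwinding of the definition of ``essential'' against the connected-component decomposition.
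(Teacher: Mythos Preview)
Your approach is the same as the paper's: split $\Phi$ into connected components $\Phi_1,\ldots,\Phi_t$, write $\rho$ as the conjunction of the predicates $\sigma_j$ they realize, and use essentiality of $\rho$ to force one component to carry everything. The paper's proof is exactly this, only shorter.

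Where you lose clarity is the last step. You already observed that each variable lies in exactly one component, so the sets $UVar(\Phi_j)$ \emph{partition} $\{x_1,\ldots,x_n\}$. Hence if two of these sets were nonempty, every $\sigma_j$ would have arity strictly less than $n$ and $\rho\in\AND(\{\sigma_1,\ldots,\sigma_t\})$ would directly contradict essentiality. So there is a unique $j$ with $UVar(\Phi_j)=UVar(\Phi)$ and $UVar(\Phi_i)=\varnothing$ for $i\neq j$. The remaining $\sigma_i$ are $0$-ary; if any were \emph{false} then $\rho$ would be the constant-false predicate of arity $n$, which has no essential tuple and is therefore not essential (for $n\ge 1$; the $0$-ary case is trivial). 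Thus all other $\sigma_i$ are \emph{true} and $\Phi_j$ realizes $\rho$ on the nose. This is the whole argument---you do not need the essential-tuple detour, the ``$\sigma_j\le\widetilde\rho$'' inequality, or any iteration; those detours in your write-up are where the exposition becomes tangled, not where there is a mathematical gap.
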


\begin{proof}

It can be easily checked that if $x$ and $y$ are connected in $\Phi,$
$y$ and $z$ are connected in $\Phi,$ then $x$ and $z$ are connected in $\Phi.$
So, all variables from $Var(\Phi)$ can be divided into equivalence classes
$V_{1}, V_{2}, \ldots, V_{s}.$
It can be shown that $\Phi$ can be divided into $s$ connected subformulas
$\Phi_{1},\Phi_{2},\ldots,\Phi_{s}$
such that
$Var(\Phi_{i}) = V_{i}$ for every $i$ and
$$\Phi = \Phi_{1}\wedge \Phi_{2} \wedge\ldots\wedge\Phi_{s}.$$
Since $\Phi$ realizes an essential predicate,
there exists $i$ such that
$UVar(\Phi) = UVar(\Phi_{i})$
and $UVar(\Phi_{j})=\varnothing$ for $j\neq i.$
Therefore $\Phi_{i}$ realizes the same predicate as $\Phi.$

\end{proof}

By $GF(G)$ we denote the set of all connected formulas $\Phi\in Formulas(G)$ such that
each unbound variable in $\Phi$ is used just once and
$\Phi$ realizes a predicate from $MAX(G).$

\begin{lemma}

The set $GF(G)$ is not empty.

\end{lemma}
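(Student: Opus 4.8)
The plan is to exhibit an explicit element of $GF(G)$ by starting from the definition of $MAX(G)$ and refining a witnessing formula step by step. First I would note that since $|[G]\cap\widetilde R_k|<\infty$, the set of essential predicates in $[G]$ has a maximal arity, so $MAX(G)$ is nonempty: pick $\rho\in MAX(G)$. Because $\rho\in[G]$, by the definition of the closure operator there is some formula $\Phi_0\in Formulas(G)$ (using only conjunctions and existential quantifiers over predicates in $G\cup\{\sigma_k^=,false\}$, and here $\sigma_k^=$ and the $0$-ary $false$ can be absorbed since $[G]\cap(R_k^1\cup R_k^2)\subseteq G$) that realizes $\rho$.

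Next I would clean up $\Phi_0$ to meet the two remaining requirements of $GF(G)$: connectedness, and each unbound variable occurring exactly once. Connectedness is immediate from Lemma~\ref{svyasnyaformula}: since $\rho$ is essential (every predicate in $MAX(G)$ is essential by definition), there is a connected subformula $\Phi_1$ of $\Phi_0$ that still realizes $\rho$. To ensure each unbound variable $x_j$ appears only once, I would introduce, for each repeated occurrence, a fresh bound variable $y$ together with the conjunct $\sigma_k^=(x_j,y)$ (equivalently a predicate in $[G]\cap R_k^2\subseteq G$), replacing that occurrence of $x_j$ by $y$ and existentially quantifying $y$; the realized predicate is unchanged. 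After possibly re-applying Lemma~\ref{svyasnyaformula} once more to restore connectedness, we obtain $\Phi\in Formulas(G)$ that is connected, has each unbound variable used exactly once, and realizes $\rho\in MAX(G)$; hence $\Phi\in GF(G)$.

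The step I expect to require the most care is checking that the replacement of repeated unbound variables by fresh quantified variables linked through equality conjuncts does not disturb connectedness in a way that cannot be repaired — and more importantly, that introducing $\sigma_k^=$ conjuncts is legitimate given the standing assumptions on $G$. Here the assumption $[G]\cap(R_k^1\cup R_k^2)\subseteq G$ is exactly what guarantees that $\sigma_k^=$, being a binary predicate in $[G]$, actually lies in $G$, so the modified formula is genuinely in $Formulas(G)$. One should also confirm that $\Phi$ does not become empty of unbound variables (it cannot, since $\rho$ has positive arity because arities in $MAX(G)$ are at least that of any essential predicate in $[G]$, and $[G]$ contains essential predicates of every arity up to its maximum, in particular $\ge 1$).

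In short: $GF(G)\ne\varnothing$ because $MAX(G)\ne\varnothing$, any realizing formula can be made connected via Lemma~\ref{svyasnyaformula}, and duplicate unbound occurrences can be eliminated using binary equality predicates which the hypotheses on $G$ place in $G$ itself.
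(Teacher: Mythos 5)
Your overall plan (pick $\rho\in MAX(G)$, realize it by a formula in $Formulas(G)$, make the formula connected via Lemma~\ref{svyasnyaformula}, and eliminate repeated unbound variables) mirrors the paper's proof in outline, though the paper performs the two clean-up steps in the opposite order. The gap is in the variable-elimination step. As you describe it, when $x_j$ occurs in conjuncts $\Psi_1,\ldots,\Psi_m$, you replace the occurrence in $\Psi_i$ (for $i\ge 2$) by a fresh bound variable $y_i$ and \emph{also adjoin the conjunct $\sigma_k^=(x_j,y_i)$}. But each of these added conjuncts contains $x_j$, so in the resulting formula $x_j$ still appears $m$ times: once in $\Psi_1$ and once in each of the $m-1$ equality conjuncts. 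The property ``each unbound variable is used just once'' is therefore not achieved. The worry you flag about connectedness being disturbed is not the real obstacle; this counting issue is.

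The idea is repairable: replace \emph{all} $m$ occurrences of $x_j$ in the original conjuncts by fresh bound variables $y_1,\ldots,y_m$, and chain the equalities as $\sigma_k^=(x_j,y_1)\wedge\sigma_k^=(y_1,y_2)\wedge\cdots\wedge\sigma_k^=(y_{m-1},y_m)$. Then $x_j$ appears in exactly one conjunct, the realized predicate is unchanged, and connectedness is automatically preserved (so the extra re-application of Lemma~\ref{svyasnyaformula} is unnecessary). Once fixed, your route is a genuinely different and more elementary argument: it uses only $\sigma_k^=\in[G]\cap R_k^2\subseteq G$ and makes no use of the maximality of $\rho$. The paper instead invokes Lemma~\ref{VycherkivanieSvobodnih}, which exploits $\rho\in MAX(G)$ to show that the identified copies of $x_j$ can be separated by existentially quantifying all but one of them, producing a formula that satisfies the ``used just once'' condition without introducing any extra $\sigma_k^=$ occurrences; that keeps the formula's shape closer to the original at the cost of a substantive lemma.
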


\begin{proof}

Let $\rho\in MAX(G).$
Suppose $\rho$ is realized by $\Phi\in Formulas(G).$
Using Lemma~\ref{VycherkivanieSvobodnih} we obtain
a formula $\Phi_{1}\in Formulas(G)$
such that $\Phi_{1}$ realizes $\rho$ and
each unbound variable in $\Phi_{1}$ is used just once.
By Lemma \ref{svyasnyaformula}, there exists a connected subformula
$\Phi_{2}$ that realizes $\rho.$
Hence $\Phi_{2}\in GF(G)$ and $GF(G)$ is not empty.

\end{proof}

if $z_{0}= z_{m}$ then we say that a path $z_{0}\Psi_{1}z_{1}\Psi_{2}\ldots \Psi_{m}z_{m}$
is a \emph{cycle.}

We say that occurrences $\Psi_{1}$ and $\Psi_{2}$
of a formula $\Phi\in Formulas(G)$
\emph{are equivalent} if there exists a cycle that contains
$\Psi_{1}$ and $\Psi_{2}.$
In this case we write $\Psi_{1}\overset{\Phi}\sim\Psi_{2}.$

\begin{lemma}

Suppose $\Psi_{1}\overset{\Phi}\sim \Psi_{2},$
$\Psi_{2}\overset{\Phi} \sim \Psi_{3},$
then $\Psi_{1}\overset{\Phi}\sim \Psi_{3}.$

\end{lemma}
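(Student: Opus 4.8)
The plan is to mimic the standard proof that ``lying on a common cycle'' is a transitive relation on the edges of a graph, reading occurrences of $\Phi$ as edges and variables as vertices; the only genuinely new feature is that an occurrence is a hyperedge, so a cycle through $\Psi_{2}$ may attach to it at a pair of variables different from the pair used by another cycle. Let $K_{1}$ be a cycle through $\Psi_{1}$ and $\Psi_{2}$, and $K_{2}$ a cycle through $\Psi_{2}$ and $\Psi_{3}$. If $K_{1}$ contains $\Psi_{3}$, or $K_{2}$ contains $\Psi_{1}$, we are done at once (and the cases $\Psi_{1}=\Psi_{2}$, $\Psi_{2}=\Psi_{3}$, $\Psi_{1}=\Psi_{3}$ reduce to one of these); so we may assume that $\Psi_{1},\Psi_{2},\Psi_{3}$ are pairwise distinct, that $\Psi_{1}$ lies on $K_{1}$ but not on $K_{2}$, and that $\Psi_{3}$ lies on $K_{2}$ but not on $K_{1}$.

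First I would cut each cycle open at $\Psi_{2}$. Deleting one fixed occurrence of $\Psi_{2}$ from $K_{1}$ leaves a path $P_{1}$ from $s$ to $t$ with $s,t\in Var(\Psi_{2})$, still containing $\Psi_{1}$; likewise $K_{2}$ becomes a path $P_{2}$ from $u$ to $v$ with $u,v\in Var(\Psi_{2})$, still containing $\Psi_{3}$. Note $Var(K_{1})=Var(P_{1})$, and that $P_{1},P_{2}$ are simple (all their variables are distinct). If $Var(P_{1})\cap Var(P_{2})=\varnothing$, then $s,t,u,v$ are four distinct variables of $\Psi_{2}$, and the sequence obtained by traversing $P_{1}$ from $s$ to $t$, then $\Psi_{2}$ from $t$ to $u$, then $P_{2}$ from $u$ to $v$, then $\Psi_{2}$ from $v$ back to $s$, is a cycle (its variable list is the concatenation of two disjoint simple lists) and it contains $\Psi_{1}$ and $\Psi_{3}$; so we are done in this case.

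Otherwise $P_{1}$ and $P_{2}$ share a variable, and I would extract an ``ear''. Walking outward along $P_{2}$ from $\Psi_{3}$ in each of the two directions and stopping the first time a variable lying in $Var(P_{1})\cup\{u,v\}$ is reached (which always happens, since the endpoints $u,v$ of $P_{2}$ themselves lie in this set), one obtains a subpath $Q$ of $P_{2}$ that contains $\Psi_{3}$, has both endpoints $a,b$ in $Var(P_{1})\cup\{u,v\}$, and has all its internal variables outside $Var(P_{1})$. It then remains to join $a$ to $b$ by a path through $\Psi_{1}$ that is internally disjoint from $Q$: when $a,b\in Var(P_{1})$ this is the arc of $K_{1}$ from $a$ to $b$ that passes through $\Psi_{1}$; when $a$ or $b$ lies in $\{u,v\}\setminus Var(P_{1})$, one reaches $\Psi_{1}$ by travelling along $P_{1}$ and uses one or two hops through $\Psi_{2}$ at the ends to land on $u$ and/or $v$. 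Since the interior of $Q$ avoids $Var(P_{1})$, and $u,v\notin Var(P_{1})$ on the occasions they serve as endpoints, the resulting closed walk has pairwise distinct variables apart from its repeated endpoint, hence is a cycle, and by construction it contains $\Psi_{1}$ and $\Psi_{3}$.

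I expect the main obstacle to be not any single step but the bookkeeping of the last paragraph: one must verify that each spliced sequence genuinely satisfies the three conditions in the definition of a path, and in particular that no variable is repeated. This is precisely where the hyperedge nature of $\Psi_{2}$ intervenes — it forces the endpoint pool to be $Var(P_{1})\cup\{u,v\}$ rather than just $Var(P_{1})$, and it is why $\Psi_{2}$ is sometimes traversed twice (legitimate, since a path imposes no distinctness condition on its occurrences). Degenerate situations, such as $\Psi_{2}$ occurring several times in $K_{1}$ or $K_{2}$, cause no trouble, because throughout the argument we only ever cut at one fixed occurrence of $\Psi_{2}$.
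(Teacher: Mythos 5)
Your proposal is correct and ultimately builds the same splice the paper does, exploiting the same observation that a single occurrence (here $\Psi_{2}$) may legitimately be traversed more than once in a path. The organizational difference is that you cut both cycles open at $\Psi_{2}$ and case-split on whether the resulting paths $P_{1},P_{2}$ are variable-disjoint, extracting an ear of $P_{2}$ in the second case; the paper never opens $C_{2}$ at all. Instead it first modifies $C_{2}$ by inserting one or two extra $\Psi_{2}$-traversals so that $C_{2}$ is forced to visit both $\Psi_{2}$-neighbours $x_{1},y_{1}$ of $C_{1}$. This makes $|Var(C_{1})\cap Var(C_{2})|\ge 2$ automatic, after which taking the first and last variables $z_{p},z_{q}$ of $C_{2}$ lying in $Var(C_{1})$ and replacing the arc of $C_{2}$ between them by the arc of $C_{1}$ through $\Psi_{1}$ works in a single step, with distinctness of the spliced variable list falling out directly from the extremality of $p$ and $q$. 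Your route is the classical ear-extraction proof that ``lying on a common cycle'' is transitive, and it does make the hyperedge subtlety (the endpoint pool $Var(P_{1})\cup\{u,v\}$) explicit; the paper's preprocessing absorbs precisely the bookkeeping you flag as delicate, namely choosing the travel direction along $P_{1}$ so as to pass $\Psi_{1}$ and re-verifying distinctness after each splice, so the two arguments are equivalent in substance but the paper's is tighter to write out. One small inaccuracy worth noting: the ``two hops'' subcase ($a,b\in\{u,v\}\setminus Var(P_{1})$) is vacuous, since it would force $Var(P_{1})\cap Var(P_{2})=\varnothing$ and hence Case 1; it does no harm, but you can drop it.
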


\begin{proof}

Suppose $C_{1}$ is a cycle that contains $\Psi_{1}$ and $\Psi_{2}.$
Let $V_{1}$ be the set of all variables in the cycle $C_{1}.$
Suppose $x_{1}\Psi_{2}y_{1}$ is a part of $C_{1}.$
We have a cycle $$C_{2} = z_{m}\Psi_{3}z_{1}\Xi_{1}z_{2}\Xi_{2}\ldots z_{m-1}\Xi_{m-1}z_{m}$$
that contains $\Psi_{2}$ and $\Psi_{3}.$
Hence, $x_{2}\Psi_{2}y_{2}$ is a part of $C_{2}$ for some variables $x_{2}$ and $y_{2}.$
Let $V_{2}$ be the set of all variables in the cycle $C_{2}.$
If $x_{1}\notin V_{2},$
then we replace
$x_{2}\Psi_{2}y_{2}$
by $x_{2}\Psi_{2}x_{1}\Psi_{2}y_{2}.$
Similarly, if $y_{1}\notin V_{2},$
then we replace
$x_{2}\Psi_{2}y_{2}$
by $x_{2}\Psi_{2}y_{1}\Psi_{2}y_{2}.$
Hence, without loss of generality it can be assumed that
$C_{2}$ contains $x_{1}$ and $y_{1}$ (see Figure \ref{threecycles}).

Let $p$ be the minimal number such that $z_{p}\in V_{1}.$
Let $q$ be the maximal number such that $z_{q}\in V_{1}.$
Since $C_{2}$ contains at least two variables $x_{1}$ and $y_{1}$ from $V_{1},$
we have $p<q.$
Since $z_{p},z_{q}\in V_{1},$
there exists a path
$$z_{p}\Delta_{1}t_{1}\Delta_{2}t_{2}\ldots \Delta_{s-1}t_{s-1}\Delta_{s}z_{q}$$
such that $t_{i}\in V_{1}$ for every $i,$
$\Delta_{i} = \Psi_{1}$ for some $i.$
Hence we have a cycle
$$
z_{m}\Psi_{3}z_{1}\Xi_{1}z_{2}\Xi_{2}\ldots \Xi_{p-1}z_{p}
\Delta_{1}t_{1}\Delta_{2}t_{2}\ldots \Delta_{s-1}t_{s-1}\Delta_{s}z_{q}
\Xi_{q}z_{q+1}\ldots \Xi_{m-1}z_{m}
$$
that contains $\Psi_{1}$ and $\Psi_{3}.$

\end{proof}

\begin{figure}
\centerline{
\xymatrix{
& z_{m-1} \ar@{-}[r]^{\Xi_{m-2}}& z_{m-2} \ar@{--}[r]& z_{q} \ar@{-}[r]^{\Delta_{s}}&t_{s-1}\ar@{--}[rd]&  \\
z_{m}\ar@{-}[ru] ^{\Xi_{m-1}} & \ar@{}[d]|{\txt{\large{cycle $C_{2}$}}}& x_{1}\ar@{--}[ru] & & \ar@{}[d]|{\txt{\large{cycle $C_{1}$}}}& t_{i} \ar@{-}[d]^{\Psi_{1}}\\
z_{1}\ar@{-}[u] ^{\Psi_{3}} &  & y_{1}\ar@{-}[u]^{\Psi_{2}} \ar@{--}[rd]& & & t_{i-1}\\
&z_{2}\ar@{-}[lu] ^{\Xi_{1}} & z_{3}\ar@{-}[l] ^{\Xi_{2}} & z_{p}\ar@{--}[l] \ar@{-}[r]^{\Delta_{1}}& t_{1}\ar@{--}[ru] &
}
}
\caption{}\label{threecycles}
\end{figure}

So, all occurrences of $\Phi\in GF(G)$ can be divided into
equivalence classes $U_{1},U_{2},\ldots,U_{m}.$
Such classes are called \emph{unions}.
By $Unions(\Phi)$ we denote the set of all unions in $\Phi.$
In the sequel we sometimes suppose that
a union $U$ is a subformula of $\Phi$ that contains all occurrences from $U$
and does not contain existential quantifiers.

We say that $\Phi$ is a \emph{tree-formula} if
every two occurrences of $\Phi$ are not equivalent.
This means that
every cycle in $\Phi$ contains just one occurrence.

\begin{lemma}\label{edinstvenostPeremennoyVUnion}

Suppose $x$ is a variable of $\Phi\in GF(G),$
$U$ is a union of $\Phi.$
Then there exists a unique variable $y\in Var(U)$
such that every path from $x$ to any variable $z\in Var(U)$ contains~$y.$

\end{lemma}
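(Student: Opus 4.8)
\textbf{Proof plan for Lemma \ref{edinstvenostPeremennoyVUnion}.}

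The plan is to argue first about existence and then about uniqueness, exploiting that a union is precisely an equivalence class of occurrences under the cycle-relation $\overset{\Phi}\sim$, and that $\Phi\in GF(G)$ is connected. For existence: since $\Phi$ is connected, for each variable $z\in Var(U)$ there is at least one path from $x$ to $z$; among all such paths (over all $z\in Var(U)$), pick one, say $x\,\Psi_1\,z_1\,\Psi_2\ldots\,\Psi_m\,z_m$, and let $y$ be the first variable along this path that lies in $Var(U)$ together with the property that the occurrence immediately following it already belongs to $U$ — more carefully, let $y$ be the first vertex on the path such that the remaining tail of the path stays inside the subformula $U$. The key point is that once a path enters $U$ it cannot leave and re-enter: if it did, the entering and leaving occurrences would be joined by a cycle (closing up through $U$, using that all occurrences of $U$ are mutually $\overset{\Phi}\sim$-equivalent, hence joined pairwise by cycles), and patching that cycle into the path would violate the no-repeated-vertices condition (3) in the definition of a path, or would let us shorten the path. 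So every path from $x$ into $U$ has a well-defined first entry vertex $y$, and I must show this $y$ does not depend on which target $z\in Var(U)$ or which path we chose.

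For that independence (which also gives uniqueness): suppose $y$ and $y'$ are first-entry vertices for two paths $P$ and $P'$ from $x$ into $U$, with $y\neq y'$. Truncate $P$ at $y$ and $P'$ at $y'$; these two initial segments both start at $x$ and are otherwise disjoint from $U$. Now $y$ and $y'$ are both variables occurring in the subformula $U$, so there is a path inside $U$ from $y$ to $y'$; splicing gives a walk from $y$ to $y'$ that exits $U$ at $y$, travels outside, and re-enters at $y'$. Combined with the internal $U$-path from $y'$ back to $y$, this produces a cycle that contains an occurrence outside $U$ and an occurrence inside $U$ — contradicting the fact that $U$ is a full $\overset{\Phi}\sim$-equivalence class (the outside occurrence would be forced into $U$). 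Care is needed to ensure the spliced walk can be pruned to an honest path/cycle satisfying condition (3); this is the standard "take a minimal bad configuration / shortcut repeated vertices" manoeuvre, and it is where the argument is fiddliest. Hence $y=y'$, so the first-entry vertex is unique; call it $y$. Finally, every path from $x$ to an arbitrary $z\in Var(U)$ must enter $U$ at this same $y$ (by the uniqueness just shown, applied with target $y$ versus target $z$), and therefore contains $y$.

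The main obstacle I anticipate is not the conceptual core — that core is just "a path cannot leave and re-enter a union, because a union is closed under cycle-equivalence" — but the bookkeeping required by condition (3) of the definition of a path: when I splice two paths or close a walk into a cycle, I may create repeated intermediate vertices, and I must systematically excise the repetitions (choosing, e.g., a counterexample with fewest occurrences, or replacing a walk by a shortest sub-walk between the relevant vertices) to land back inside the class of genuine paths and cycles. Once that technical discipline is in place, each step reduces to applying the transitivity of $\overset{\Phi}\sim$ and the definition of $Unions(\Phi)$ as equivalence classes, so no further machinery beyond the preceding lemmas is needed.
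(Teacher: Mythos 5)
Your proposal is correct and follows essentially the same route as the paper: both identify the "first entry" vertex into $U$ (the paper phrases it as the last $U$-vertex on the reversed path from a $U$-variable back to $x$), and both obtain the contradiction by splicing the two external segments with an internal $U$-path to produce a cycle that forces an exterior occurrence into the $\overset{\Phi}\sim$-class of $U$ — which is impossible since unions are full equivalence classes (in the paper's version this manifests as contradicting the maximality of the index $q$). The bookkeeping you flag as "fiddliest" — pruning repeated vertices to get a genuine cycle satisfying condition (3) — is exactly what the paper does by choosing the minimal index $r>q$ at which the two tails first meet, so your instinct about where the technical care is needed is correct.
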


\begin{proof}

Suppose $z_{0}, t_{0}\in Var(U).$
Since $\Phi$ is connected,
there exist paths
$$z_{0}\Psi_{1}z_{1}\Psi_{2}z_{2}\Psi_{3}\ldots \Psi_{m-1}z_{m-1}\Psi_{m}z_{m},$$
$$t_{0}\Xi_{1}t_{1}\Xi_{2}t_{2}\Xi_{3}\ldots \Xi_{l-1}t_{l-1}\Xi_{l}t_{l},$$
where $z_{m} = t_{l} = x.$
Let $p$ be the maximal number such that
$z_{p}\in Var(U).$
Let $q$ be the maximal number such that
$t_{q}\in Var(U).$
To complete the proof we need to show that $z_{p} = t_{q}.$
In this case obviously $y = z_{p} = t_{q}$ and $y$ is unique.
Let us prove that $z_{p} = t_{q}.$

Assume the converse.
If $x\in Var(U)$, then $z_{p} = t_{q} = x$ and there is nothing to prove.
Suppose $x\notin Var(U),$ then $p<m,$ $q<l.$
Let $r$ be the minimal number
such that
$r>q$ and $t_{r} = z_{j}$ for some $j>p.$
Since $t_{l} = z_{m} = x,$ this number exists.
Since $U$ is a union,
there exists a path
$z_{p}\Delta_{1}s_{1}\Delta_{2}s_{2}\ldots s_{r-1}\Delta_{r} t_{q}$
such that $\Delta_{i}$ is an occurrence from $U$ for every $i.$
So, we have the cycle
\begin{multline*}z_{p}\Delta_{1}s_{1}\Delta_{2}s_{2}\ldots s_{r-1}\Delta_{r} t_{q}
\Xi_{q+1}t_{q+1}\Xi_{q+2}\ldots \\ \ldots \Xi_{r-1}t_{r-1}\Xi_{r}t_{r}
\Psi_{j}z_{j-1}\Psi_{j-1}z_{j-2}\ldots z_{p+1}\Psi_{p+1}z_{p}.\end{multline*}
Hence $\Delta_{1}\overset{\Phi}\sim \Xi_{q+1}.$ Then $t_{q+1} \in Var(U).$
This contradicts the maximality of~$q.$
The lemma is proved.

\end{proof}

Suppose $\Phi\in GF(G), U \in Unions(\Phi), x \in Var(\Phi).$
Let $KeyVar_{\Phi}(U,x)$ be a unique variable $y\in Var(U)$
such that every path from $x$ to any variable $z\in Var(U)$ contains $y.$
It follows from Lemma \ref{edinstvenostPeremennoyVUnion} that
$KeyVar_{\Phi}$ is well-defined.

Suppose $\Psi = \rho(z_{1},\ldots,z_{n})$ is an occurrence,
then we say that $\Psi$ has arity~$n.$
We denote the arity of $\Psi$ by $ar(\Psi).$

In the sequel we define difficult notions and parameters for formulas.
In this section we operate with formulas like with graphs.
That is why it seems
appropriate to explain new notions and parameters on the graph.
We give an example of a formula $\widetilde \Phi$ in Figure \ref{bigExample}.
We suppose that every occurrence in the formula $\widetilde \Phi$ has arity 2.
Variables of $\widetilde \Phi$ are vertexes in the figure, occurrences of $\widetilde \Phi$ are edges in the figure.
To distinguish bound variables we locate them inside circles.

\begin{figure}
\centerline{
\xymatrix{
*++[o][F-]{x_{1}} \ar@{-}[d]_{\Xi_{1}} & & *++[o][F-]{x_{2}} \ar@{-}[dl]_{\Xi_{2}}&  & *++[o][F-]{x_{3}}
\ar@{-}[dr]^{\Xi_{3}}&  & *++[o][F-]{x_{4}} \ar@{-}[dl]_{\Xi_{4}} &  \\
{z_{1}} \ar@{-}[r]^{\Xi_{5}} \ar@{-}[d]_{\Xi_{6}}
&  {z_{2}} \ar@{-}[ld]_{\Xi_{7}} \ar@{-}[rd]^{\Xi_{8}} &  & {z_{3}}
\ar@{-}[dl]_{\Xi_{9}} \ar@{-}[dr]^{\Xi_{10}}&  & {z_{4}} \ar@{-}[dl]_{\Xi_{11}} \ar@{-}[dr]^{\Xi_{12}} &  &  \\
z_{5} \ar@{-}[rr]^{\Xi_{13}} \ar@{-}[d]_{\Xi_{14}} & & z_{6} \ar@{-}[d]^{\Xi_{15}} \ar@{-}[rr]^{\Xi_{16}} &
 & z_{7} \ar@{-}[rd]^{\Xi_{17}}& & z_{8}\ar@{-}[ld]_{\Xi_{18}} \ar@{-}[r]^{\Xi_{19}}& *++[o][F-]{x_{5}} \\
*++[o][F-]{x_{6}} & & *++[o][F-]{x_{7}} & & *++[o][F-]{x_{8}} & z_{9} \ar@{-}[l]_{\Xi_{20}} \ar@{-}[r]^{\Xi_{21}} & *++[o][F-]{x_{9}} &
}
}
\caption{}\label{bigExample}
\end{figure}

Formula $\widetilde \Phi$ in Figure \ref{bigExample} has 9 bound variables and 9 unbound variables.
There are 12 unions in $\widetilde \Phi$:
$U_{1} = \{\Xi_{5},\Xi_{6},\Xi_{7},\Xi_{8},\Xi_{13}\},$
$U_{2} = \{\Xi_{9},\Xi_{10},\Xi_{16}\},$
$U_{3} = \{\Xi_{11},\Xi_{12},\Xi_{17},\Xi_{18}\},$
$U_{4} = \{\Xi_{1}\},$
$U_{5} = \{\Xi_{2}\},$
$U_{6} = \{\Xi_{3}\},$
$U_{7} = \{\Xi_{4}\},$
$U_{8} = \{\Xi_{19}\},$
$U_{9} = \{\Xi_{14}\},$
$U_{10} = \{\Xi_{15}\},$
$U_{11} = \{\Xi_{20}\},$
$U_{12} = \{\Xi_{21}\}.$
Some values for $KeyVar$ are listed below:
$KeyVar_{\widetilde \Phi}(U_{1},x_{1})=z_{1},$
$KeyVar_{\widetilde \Phi}(U_{1},x_{5})=z_{6},$
$KeyVar_{\widetilde \Phi}(U_{3},x_{7})=z_{7}.$

Suppose $U$ is a union of $\Phi,$
$X = \{x_{1},\ldots,x_{n}\} = UVar(\Phi).$
Let $y\in Var(U),$ then
we put
$$KeySet_{\Phi}(U,y) = \{x \in X\;|\; KeyVar_{\Phi}(U,x) = y\},$$
$$MaxKeySet_{\Phi}(U) = \max \limits _{y\in Var(U)} |KeySet_{\Phi}(U,y)|.$$
For example (see Figure \ref{bigExample}),
$$KeySet_{\widetilde \Phi}(U_{1},z_{1}) = \{x_{1}\}, \;\;
KeySet_{\widetilde \Phi}(U_{2},z_{6}) = \{x_{1},x_{2},x_{6},x_{7}\},$$
$$KeySet_{\widetilde \Phi}(U_{2},z_{3}) = \varnothing, \;\;
KeySet_{\widetilde \Phi}(U_{1},z_{6}) = \{x_{3},x_{4},x_{5},x_{7},x_{8},x_{9}\}.$$
$$MaxKeySet_{\widetilde \Phi}(U_{1}) = 6, \;\;
MaxKeySet_{\widetilde \Phi}(U_{2}) = 5,$$
$$MaxKeySet_{\widetilde \Phi}(U_{3}) = 4, \;\;
MaxKeySet_{\widetilde \Phi}(U_{4}) = 8.$$

Put
$$Arity(U) = \sum\limits_{\Psi\in U} (ar(\Psi)-1),$$
$$Par(U) = 2+ Arity(U) - |Var(U)|,$$
For example (see Figure \ref{bigExample}),
$$Arity(U_{1}) = 5, \;\; Par(U_{1}) = 2+5 -4 = 3,$$
$$Arity(U_{2}) = 3, \;\; Par(U_{2}) = 2+3 -3 = 2,$$
$$Arity(U_{3}) = 4, \;\; Par(U_{3}) = 2+4 -4 = 2,$$
$$Arity(U_{4}) = 1, \;\; Par(U_{4}) = 2+1 -2 = 1.$$

We say that $U$ is a \emph{trivial union} if there exist $z_{1},z_{2}\in Var(U)$   
such that $KeySet_{\Phi}(U,z_{1})\cup KeySet_{\Phi}(U,z_{2}) = UVar(\Phi).$
All other unions are called \emph{nontrivial} and the set of all nontrivial unions in $\Phi$ is denoted by
$NUnions(\Phi).$
$\widetilde \Phi$ has just two nontrivial unions: $U_{1}$ and $U_{3}$ (see Figure \ref{bigExample}).
For example union $U_{2}$ is trivial, because
$$KeySet_{\widetilde \Phi}(U_{2},z_{6})\cup
KeySet_{\widetilde \Phi}(U_{2},z_{7})  =
\{x_{1},x_{2},x_{6},x_{7}\}\cup
\{x_{3},x_{4},x_{5},x_{8},x_{9}\}.$$ 

$$MaxUnion(\Phi) = \max\limits_{U\in NUnions(\Phi)} |U|,$$
$$MaxPar(\Phi) = \max\limits_{\substack{U\in NUnions(\Phi)\\ |U| = MaxUnion(\Phi)}} Par(U).$$
Also we put $MaxUnion(\Phi)=1$ if $NUnions(\Phi)=\varnothing.$
By $Max(\Phi)$ we denote the set of all
unions $U\in NUnions(\Phi)$
such that $|U|=MaxUnion(\Phi)$ and
$Par(U)=MaxPar(\Phi).$ Put
$$MinMaxKeySet(\Phi) = \min\limits_{U\in Max(\Phi)} MaxKeySet_{\Phi}(U).$$
By $Number(\Phi)$ we denote the number of unions $U\in Max(\Phi)$
such that 
$MaxKeySet_{\Phi}(U) = MinMaxKeySet(\Phi).$

Obviously $MaxUnion(\widetilde \Phi) = 5,$ $MaxPar(\widetilde \Phi) = 3$ (see Figure \ref{bigExample}).
Also $Max(\widetilde \Phi)=\{U_{1}\},$
$MinMaxKeySet(\widetilde \Phi) = 6,$
$Number(\widetilde \Phi) = 1.$

So, we have the mapping $\Upsilon: GF(G) \longrightarrow \mathbb N \times \mathbb N \times \mathbb N \times \mathbb N$
such that
$$\Upsilon(\Phi) = (MaxUnion(\Phi),MaxPar(\Phi),MinMaxKeySet(\Phi),Number(\Phi)).$$
If $NUnions(\Phi) = \varnothing,$ then we suppose that $\Upsilon(\Phi)$ is not defined.

Let us define a linear order on the set $\mathbb N \times \mathbb N \times \mathbb N \times \mathbb N.$
We say that $(a_{1},a_{2},a_{3},a_{4})\le (b_{1},b_{2},b_{3},b_{4})$ iff
\begin{multline*}(a_{1}<b_{1}) \vee (a_{1} = b_{1} \wedge a_{2}<b_{2}) \vee  (a_{1} = b_{1} \wedge a_{2}=b_{2}
\wedge a_{3}> b_{3}) \vee \\
\vee  (a_{1} = b_{1} \wedge a_{2}=b_{2}   \wedge a_{3}=b_{3}
\wedge a_{4}\le b_{4}).\end{multline*}

\begin{lemma}\label{posledovatelnostKonechna}

Suppose
$(a_{i+1},b_{i+1},c_{i+1},d_{i+1})\le
(a_{i},b_{i},c_{i},d_{i}),$
$c_{i}<C$
for every $i\in \mathbb N.$
Then there exists $n\in \mathbb N,$
such that
$(a_{i},b_{i},c_{i},d_{i}) = (a_{i+1},b_{i+1},c_{i+1},d_{i+1})$
for every $i>n.$

\end{lemma}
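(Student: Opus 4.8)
We must show that a non‑increasing (with respect to the funny linear order defined just above) sequence of quadruples $(a_i,b_i,c_i,d_i)\in\mathbb N^4$, whose third coordinates are bounded by a constant $C$, is eventually constant. The point is that the order on $\mathbb N^4$ is a lexicographic‑type order which is \emph{decreasing} in the first, second and fourth coordinates but \emph{increasing} in the third coordinate; so a descending chain can push the third coordinate \emph{up}, and only the bound $c_i<C$ prevents this from going on forever.

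The plan is a straightforward four‑stage stabilization argument, peeling off coordinates one at a time. First, look at the first coordinates $a_i$. Since $(a_{i+1},b_{i+1},c_{i+1},d_{i+1})\le(a_i,b_i,c_i,d_i)$ forces $a_{i+1}\le a_i$ (this is immediate from the definition of $\le$: in every disjunct we have $a_{i+1}<a_i$ or $a_{i+1}=a_i$), the sequence $(a_i)$ is a non‑increasing sequence of natural numbers, hence eventually constant; pick $n_1$ with $a_i=a_{n_1}$ for all $i\ge n_1$. For $i\ge n_1$ the order relation collapses to its last three disjuncts, so $b_{i+1}\le b_i$; thus $(b_i)_{i\ge n_1}$ is non‑increasing in $\mathbb N$ and stabilizes at some $n_2\ge n_1$. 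Now for $i\ge n_2$ the relation forces $c_{i+1}\ge c_i$, so $(c_i)_{i\ge n_2}$ is \emph{non‑decreasing}; but it is bounded above by $C$, hence it too is eventually constant — say from $n_3\ge n_2$ on. Finally, for $i\ge n_3$ all of $a,b,c$ are constant, so the defining relation reduces to its last disjunct $d_{i+1}\le d_i$, giving a non‑increasing sequence of natural numbers, constant from some $n\ge n_3$ onward. For that $n$ all four coordinates are constant, which is exactly the claim.

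The only mildly delicate point — and the one I would write out carefully — is extracting from the displayed disjunction the right monotonicity of each coordinate \emph{once the earlier coordinates are known to be equal}; this is a pure case check on the five disjuncts and the source of the "$c_i>b_i$"/"$c_i\le b_i$" asymmetry. Everything else is the classical fact that a non‑increasing (resp. bounded non‑decreasing) sequence of natural numbers is eventually constant, applied four times in succession; no combinatorics of formulas enters here. So the main (and essentially only) obstacle is bookkeeping the order definition correctly, and there is no genuine obstruction.
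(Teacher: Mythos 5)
Your proof is correct and takes essentially the same route as the paper: stabilize the coordinates in order, using that $(a_i)$ and $(b_i)$ are non-increasing, $(c_i)$ is non-decreasing and bounded above by $C$, and finally $(d_i)$ is non-increasing, each time invoking that a monotone bounded sequence of natural numbers is eventually constant. (One inconsequential slip: the displayed order has four disjuncts, not five, and the asymmetry is in the reversed inequality on the third coordinate; this does not affect your argument.)
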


\begin{proof}

Suppose $a$ is the minimal number such that
$a=a_{p}$ for some $p.$
Hence $a_{i} = a$ for every $i\ge p.$
Suppose $b$ is the minimal number such that
$b=b_{q}$ for some $q\ge p.$
Hence $b_{i} = b$ for every $i\ge q.$
Suppose $c$ is the maximal number such that
$c_{r} = c$ for some $r\ge q.$
Hence $c_{i} = c$ for every $i\ge r.$
Suppose $d$ is the minimal number such that
$d_{s} = d$ for some $s\ge r.$
Hence $a_{i} = a,$
$b_{i} = b,$
$c_{i} = c,$
$d_{i} = d,$
for every $i\ge s.$
This concludes the proof.

\end{proof}


By $RQ(\Phi,y_{1},y_{2},\ldots,y_{l})$ we denote the formula that is obtained from $\Phi$
by removing the expressions $\exists y_{1},\exists y_{2},\ldots,\exists y_{l}.$ 

By $RVS(x_{1},\ldots,x_{n},\Psi, x_{1}',\ldots,x_{n}')$
we denote
the formula that is obtained from $\Phi$
by renaming variable $x_{i}$ by the variable $x_{i}'$
for every $i.$

Suppose $y$ is an unbound variable of $\Phi\in GF(G),$ $\Psi$ is an occurrence of~$\Phi.$
By $RV(\Phi, y, z, t,  \Psi)$ we denote the formula that is obtained from $\Phi$
by renaming variable $y$ in the occurrence $\Psi$ 
by variable $z,$
and renaming variable $y$ in other occurrences
by variable $t.$

\textbf{Transformation.}
Suppose a formula $\Phi \in GF(G),$ $\Phi(x_{1},\ldots,x_{n})$ realizes a predicate $\rho\in MAX(G).$
Suppose a variable $y$ is a bound variable in $\Phi$ and this variable is
used in occurrences $\Psi.$ 
Let
$$\rho_{0}(y) = \exists x_{1}\ldots \exists x_{n} RQ(\Phi, y).$$
By $\Phi_{i}'$  we denote the following formula:
$$RVS(x_{1},x_{2},\ldots,x_{n},RV(RQ(\Phi, y), y, y_{i}, y_{i+1}, \Psi),x_{i,1},x_{i,2},\ldots,x_{i,n}).$$
By $\Phi'$  we denote the following formula:
$$\exists y_{0} \exists y_{1}\ldots \exists y_{k} \;\left( \bigwedge \limits_{0\le i \le k-1}
\Phi_{i}'\right)\wedge \left( \bigwedge \limits_{0\le i \le k}\rho_{0}(y_{i})\right).$$

Let
\begin{multline*}\rho'(x_{0,1},\ldots,x_{0,n}, x_{1,1},\ldots,x_{1,n},
\ldots,x_{k-1,1},\ldots,x_{k-1,n}) = \\ =\Phi'(x_{0,1},\ldots,x_{0,n}, x_{1,1},\ldots,x_{1,n},
\ldots,x_{k-1,1},\ldots,x_{k-1,n}).\end{multline*}
By Theorem \ref{GlavnoePreobrazovanie},
we have
$\rho'(x_{1},\ldots,x_{n},
\ldots,x_{1},\ldots,x_{n}) =
\rho(x_{1},\ldots,x_{n}).$
Hence by Lemma \ref{VycherkivanieSvobodnih}, it follows that
$\rho \in Strike(\rho')$ and
there exist $i_{1},\ldots,i_{n} \in \{0,1,\ldots,k-1\}$ such that
$$\rho(x_{1},\ldots,x_{n}) =
\exists t_{1} \exists t_{2}\ldots \exists  t_{l} \;  \rho'(z_{0,1},\ldots,z_{0,n},
\ldots,z_{k-1,1},\ldots,z_{k-1,n}),$$
where $z_{i_{j},j} = x_{j}$ for every $j,$
$z_{i,j}$ are different for every $i,j.$
By $D_{m}$ we denote the set of all $j\in \{1,\ldots,n\}$
such that $i_{j} = m.$
Hence,
$D_{0}\sqcup D_{1}\sqcup \ldots \sqcup D_{k-1} = \{1,2,\ldots,n\}.$

Suppose $m\in \{0,1,\ldots,k-1\},$
$\{1,2,\ldots,n\} \setminus D_{m} = \{l_{1},l_{2},\ldots,l_{s}\}.$
Then by $\Phi_{m}$ we denote the following formula:
$$\exists x_{l_{1}} \exists x_{l_{2}} \ldots \exists x_{l_{s}}
 \;  RV(RQ(\Phi, y), y, y_{m}, y_{m+1}, \Psi).$$

By $Trans(\Phi, y, \Psi)$ we denote the
formula
$$\exists y_{0} \exists y_{1}\ldots \exists y_{k} \; \left(\bigwedge \limits_{0\le i \le k-1}
\Phi_{i} \right)
\wedge \left( \bigwedge \limits_{0\le i \le k}\rho_{0}(y_{i})\right).$$
It can be proved that
every unbound variable of
$Trans(\Phi, y, \Psi)$ is used just once and
$Trans(\Phi, y, \Psi)$ realizes predicate $\rho.$
Hence $Trans(\Phi, y, \Psi)\in GF(G).$

In Figure~\ref{TransExample} you can find
an example of applying the transformation.
One of the possible results of calculating
$\Omega = Trans(\widetilde \Phi,z_{5},\Xi_{6})$ is presented on this figure.
Here we suppose that $k=3$ and $\widetilde \Phi$ is the formula from Figure~\ref{bigExample}.
Roughly speaking, $\Omega$ consists of three copies of $\widetilde \Phi$
and four new occurrences $\Theta_{i} = \rho_{0}(y_{i})$ for $i=0,1,2,3.$

There are only three nontrivial unions in $\Omega$:
$W_{0} = \{\Xi_{0,7},\Xi_{0,8},\Xi_{0,13}\},$
$W_{1} = \{\Xi_{1,7},\Xi_{1,8},\Xi_{1,13}\},$
$W_{2} = \{\Xi_{2,7},\Xi_{2,8},\Xi_{2,13}\}.$
It can be easily checked that
$$Par(W_{0}) = Par(W_{1}) = Par(W_{2}) = 2 + 3 - 3 = 2,$$
$$MaxKeySet_{\Omega}(W_{0}) = 7,MaxKeySet_{\Omega}(W_{1}) = 4,MaxKeySet_{\Omega}(W_{2}) = 7.$$
Hence $$MaxUnion(\Omega) = 3,\;\;
MaxPar(\Omega) = 2,$$
$$MinMaxKeySet(\Omega) = 4,\;\;
Number(\Omega) = 1.$$
So, we have $\Upsilon(\widetilde \Phi) = (5,3,6,1),$
$\Upsilon(\Omega) = (3,2,4,1)$ and $\Upsilon(\Omega)<\Upsilon(\widetilde \Phi).$
Using the transformation we reduce value of $\Upsilon.$

\begin{figure}
\centerline{
\xymatrix{
&{x_{0,1}} \ar@{-}[d]^{\Xi_{0,1}} & & {x_{0,2}} \ar@{-}[dl]_{\Xi_{0,2}}&  & *++[o][F-]{x_{3}}
\ar@{-}[dr]^{\Xi_{0,3}}&  & *++[o][F-]{x_{4}} \ar@{-}[dl]^{\Xi_{0,4}} \\
y_{0} \ar@{-}@(ul,ur)[]^(0.55){\Theta_{0}} &{z_{0,1}} \ar@{-}[r]^{\Xi_{0,5}} \ar@{-}[l]^{\Xi_{0,6}}
&  {z_{0,2}} \ar@{-}[ld]_{\Xi_{0,7}} \ar@{-}[rd]^{\Xi_{0,8}} &  & {z_{0,3}}
\ar@{-}[dl]_{\Xi_{0,9}} \ar@{-}[dr]^{\Xi_{0,10}}&  & {z_{0,4}} \ar@{-}[dl]_{\Xi_{0,11}} \ar@{-}[dr]^{\Xi_{0,12}} & {x_{0,5}} \\
&y_{1}  \ar@{-}@(l,u)[]^(0.35){\Theta_{1}} \ar@{-}[rr]^{\Xi_{0,13}} \ar@{-}[d]^{\Xi_{0,14}} & & z_{0,6} \ar@{-}[d]^{\Xi_{0,15}} \ar@{-}[rr]^{\Xi_{0,16}} &
 & z_{0,7} \ar@{-}[rd]^{\Xi_{0,17}}& & z_{0,8}\ar@{-}[ld]_{\Xi_{0,18}} \ar@{-}[u]_{\Xi_{0,19}} \\
&*++[o][F-]{x_{6}} & & {x_{0,7}} & & {x_{0,8}} & z_{0,9} \ar@{-}[l]_{\Xi_{0,20}} \ar@{-}[r]^{\Xi_{0,21}} & {x_{0,9}} & \\
&{x_{1,1}} \ar@{-}[d]^{\Xi_{1,1}} & & *++[o][F-]{x_{2}} \ar@{-}[dl]_{\Xi_{1,2}}&  & *{x_{1,3}}
\ar@{-}[dr]^{\Xi_{1,3}}&  & *{x_{1,4}} \ar@{-}[dl]^{\Xi_{1,4}} \\
&{z_{1,1}} \ar@/^2pc/@{-}[uuu]^{\Xi_{1,6}} \ar@{-}[r]^{\Xi_{1,5}}
&  {z_{1,2}} \ar@{-}[ld]_{\Xi_{1,7}} \ar@{-}[rd]^{\Xi_{1,8}} &  & {z_{1,3}}
\ar@{-}[dl]_{\Xi_{1,9}} \ar@{-}[dr]^{\Xi_{1,10}}&  & {z_{1,4}} \ar@{-}[dl]_{\Xi_{1,11}} \ar@{-}[dr]^{\Xi_{1,12}} & *++[o][F-]{x_{5}} \\
&y_{2}  \ar@{-}@(l,u)[]^(0.35){\Theta_{2}} \ar@{-}[rr]^{\Xi_{1,13}} \ar@{-}[d]^{\Xi_{1,14}} & & z_{1,6} \ar@{-}[d]^{\Xi_{1,15}} \ar@{-}[rr]^{\Xi_{1,16}} &
 & z_{1,7} \ar@{-}[rd]^{\Xi_{1,17}}& & z_{1,8}\ar@{-}[ld]_{\Xi_{1,18}} \ar@{-}[u]_{\Xi_{1,19}} \\
&{x_{1,6}} & & {x_{1,7}} & & {x_{1,8}} & z_{1,9} \ar@{-}[l]_{\Xi_{1,20}} \ar@{-}[r]^{\Xi_{1,21}} & {x_{1,9}} & \\
&*++[o][F-]{x_{1}} \ar@{-}[d]^{\Xi_{2,1}} & & {x_{2,2}} \ar@{-}[dl]_{\Xi_{2,2}}&  & {x_{2,3}}
\ar@{-}[dr]^{\Xi_{2,3}}&  & {x_{2,4}} \ar@{-}[dl]^{\Xi_{2,4}} \\
&{z_{2,1}} \ar@/^2pc/@{-}[uuu]^{\Xi_{2,6}} \ar@{-}[r]^{\Xi_{2,5}}
&  {z_{2,2}} \ar@{-}[ld]_{\Xi_{2,7}} \ar@{-}[rd]^{\Xi_{2,8}} &  & {z_{2,3}}
\ar@{-}[dl]_{\Xi_{2,9}} \ar@{-}[dr]^{\Xi_{2,10}}&  & {z_{2,4}} \ar@{-}[dl]_{\Xi_{2,11}} \ar@{-}[dr]^{\Xi_{2,12}} & {x_{2,5}} \\
&y_{3} \ar@{-}@(l,u)[]^(0.35){\Theta_{3}} \ar@{-}[rr]^{\Xi_{2,13}} \ar@{-}[d]^{\Xi_{2,14}} & & z_{2,6} \ar@{-}[d]^{\Xi_{2,15}} \ar@{-}[rr]^{\Xi_{2,16}} &
 & z_{2,7} \ar@{-}[rd]^{\Xi_{2,17}}& & z_{2,8}\ar@{-}[ld]_{\Xi_{2,18}} \ar@{-}[u]_{\Xi_{2,19}} \\
&{x_{2,6}} & & *++[o][F-]{x_{7}} & & *++[o][F-]  {x_{8}} & z_{2,9} \ar@{-}[l]_{\Xi_{2,20}} \ar@{-}[r]^{\Xi_{2,21}} & *++[o][F-]{x_{9}} &
}
}
\caption{Example of applying the transformation: $Trans(\widetilde \Phi,z_{5},\Xi_{6})$}\label{TransExample}
\end{figure}
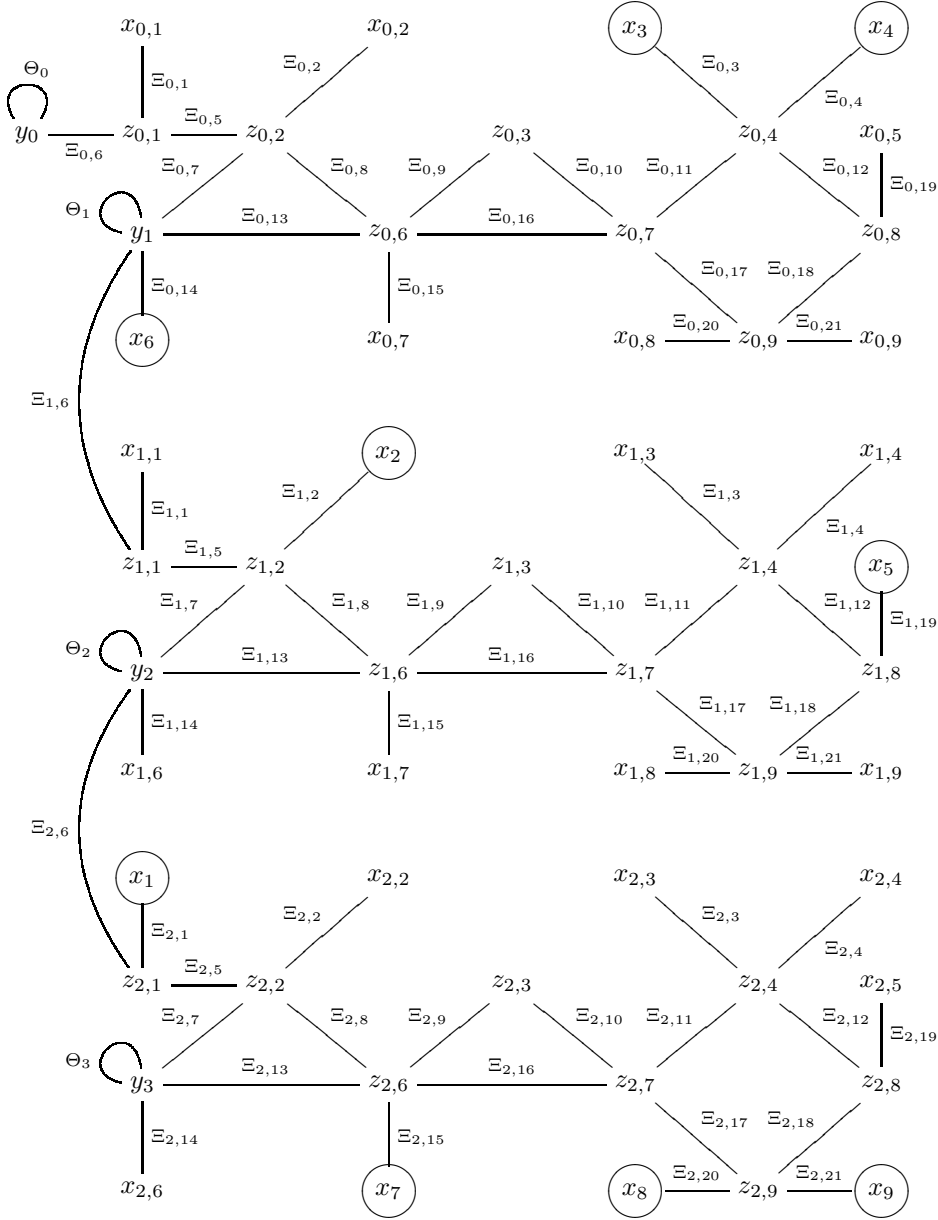

In the following lemmas
we will use notations from the definition of the transformation.
As it follows from the definition of $Trans(\Phi, y, \Psi)$
we create $k$ copies of every occurrence from $\Phi.$
We suppose that all bound variables in formulas
$\Phi_{0},\ldots,\Phi_{k-1}$ are different.
Nevertheless, to simplify explanations
we use the same notations for bound variables in all copies.





We say that a union $U$ in $\Phi$ is a \emph{parent} of a union $U'$ in $Trans(\Phi, y, \Psi)$
iff
there exists $m$ such that the following conditions hold:

1) $U'$ is a union of $\Phi_{m};$

2) $U$ is obtained from $U'$
by replacing variables $y_{m},y_{m+1}$ by variable $y.$

\begin{lemma}\label{unionlejitvkusochke}

Suppose $U$ is a union of $Trans(\Phi, y, \Psi).$
Then there exists $m\in \{0,1,\ldots,k-1\}$ such that $U$ is a union in
$\Phi_{m}.$

\end{lemma}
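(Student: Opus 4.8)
The plan is to read off, from the construction of $T:=Trans(\Phi,y,\Psi)$, exactly how its variables are shared among the $k$ copies $\Phi_0,\dots,\Phi_{k-1}$, and then to observe that this sharing pattern prevents any cycle of $T$ from running through two distinct copies.

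First I would check the following structural facts straight from the definition of $Trans(\Phi,y,\Psi)$. Every occurrence of $T$ is either an occurrence of exactly one copy $\Phi_m$, or one of the new occurrences $\rho_0(y_i)$ with $0\le i\le k$, and $Var(\rho_0(y_i))=\{y_i\}$. Within a copy, the substitution $RV(RQ(\Phi,y),y,y_m,y_{m+1},\Psi)$ turns the bound variable $y$ into $y_m$ in the single occurrence $\Psi$ and into $y_{m+1}$ in every other occurrence, while all remaining variables of the copies are kept apart: the unbound $x_j$ with $j\in D_m$ occurs only in $\Phi_m$, the other $x_j$ are quantified locally, and all remaining bound variables are fresh per copy. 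Consequently $Var(\Phi_m)\cap Var(\Phi_{m'})$ equals $\{y_{m+1}\}$ when $|m-m'|=1$ and is empty when $|m-m'|\ge 2$, and $y_i\in Var(\Phi_m)$ precisely when $m\in\{i-1,i\}$. Thus $y_0,\dots,y_k$ are articulation variables that string $\Phi_0,\dots,\Phi_{k-1}$ into a chain, with the $\rho_0(y_i)$ hanging off $y_i$ as pendants. Nailing these facts down carefully against the bookkeeping in the definition of $Trans$ is the part I expect to cost the most effort; the rest is short.

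Next I would reduce the lemma to a statement about cycles. Since $\overset{T}\sim$ is transitive (proved above), a union $U$ of $T$ is an $\overset{T}\sim$-equivalence class of occurrences. A pendant $\rho_0(y_i)$ can lie on a cycle only in the length-$1$ cycle $y_i\,\rho_0(y_i)\,y_i$ (otherwise two cyclically consecutive variables would both equal $y_i$, violating distinctness), so $\{\rho_0(y_i)\}$ is a singleton union; it is trivial in the sense of this section, and since $y_i\in Var(\Phi_{\max(i-1,0)})$ we may treat $\Phi_{\max(i-1,0)}$ as its copy. For $|U|\ge 2$ all occurrences of $U$ lie in copies, and it suffices to show: if a cycle $C=z_0\Psi_1z_1\cdots\Psi_rz_r$ with $z_0=z_r$, $r\ge 2$, and $z_0,\dots,z_{r-1}$ pairwise distinct, contains occurrences $\Psi_a$ of $\Phi_m$ and $\Psi_b$ of $\Phi_{m'}$, then $m=m'$. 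Granting this, fixing any $\Psi_0\in U$ and letting $\Phi_m$ be its copy, every $\Psi\in U$ is joined to $\Psi_0$ by a cycle, hence lies in $\Phi_m$ and is joined to $\Psi_0$ by a cycle inside $\Phi_m$; together with the converse inclusion this makes $U$ a union of $\Phi_m$.

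It remains to prove the statement about $C$. Since $C$ has length $\ge 2$ it uses no pendant, so each $\Psi_t$ belongs to a unique copy $\Phi_{m(t)}$. Suppose the $m(t)$ are not all equal and put $m_0=\min_t m(t)$; call $\Psi_t$ \emph{left} if $m(t)=m_0$ and \emph{right} otherwise, so both kinds appear. If a left $\Psi_t$ and a right $\Psi_{t+1}$ are cyclically consecutive, then the variable $z_t$ lies in $Var(\Phi_{m_0})\cap Var(\Phi_{m(t+1)})$, which by the structural facts is empty unless $m(t+1)=m_0+1$, in which case $z_t=y_{m_0+1}$; the same holds with left and right exchanged. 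Hence every switch between left and right around $C$ happens at a position $t$ with $z_t=y_{m_0+1}$, so at most one switch occurs, because $y_{m_0+1}$ appears at most once among $z_0,\dots,z_{r-1}$. But a cyclic word over $\{\text{left},\text{right}\}$ that uses both letters has at least two switches. This contradiction yields $m=m'$ and completes the proof.
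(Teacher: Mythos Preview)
Your proof is correct and follows essentially the same idea as the paper: the variables $y_{m+1}$ are articulation points separating consecutive copies, so no cycle can span two distinct $\Phi_m$'s. The paper's own argument is much terser---it simply asserts that any path between occurrences in $\Phi_i$ and $\Phi_j$ with $i<j$ must pass through $y_{i+1}$, hence no cycle can contain both---and it silently ignores the pendant occurrences $\rho_0(y_i)$ that you take the trouble to handle explicitly.
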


\begin{proof}

Assume the converse.
Suppose we have two occurrences $\Psi_{1}$ and $\Psi_{2}$
such that $\Psi_{1}\in \Phi_{i},$
$\Psi_{2}\in \Phi_{j},$ $i< j.$
Obviously, every path that contains $\Psi_{1}$ and $\Psi_{2}$
also contains variable $y_{i+1}.$
Hence there is no a cycle that contains $\Psi_{1}$ and $\Psi_{2}.$
Therefore, $\Psi_{1}$ and $\Psi_{2}$ are not equivalent.
This concludes the proof.

\end{proof}

The following lemma follows from the definition of $Trans(\Phi, y, \Psi).$

\begin{lemma}\label{ekvivalentnostostaetsya}

Suppose $\Psi_{1}$ and $\Psi_{2}$ are occurrences in $\Phi_{m},$
$\Psi_{1}\overset{\Phi_{m}}\sim\Psi_{2},$
$\Psi_{1}'$ and $\Psi_{2}'$ are obtained from $\Psi_{1}$ and $\Psi_{2}$
by replacing variables $y_{m},y_{m+1}$ by variable~$y.$
Then
$\Psi_{1}'$ and $\Psi_{2}'$ are occurrences in $\Phi$ and
$\Psi_{1}'\overset{\Phi}\sim\Psi_{2}'.$


\end{lemma}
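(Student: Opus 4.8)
The plan is to lift a witnessing cycle from $\Phi_m$ back to $\Phi$ along the renaming that merges $y_m$ and $y_{m+1}$ into $y$. First I would unwind the definition of $Trans(\Phi,y,\Psi)$: the formula $\Phi_m$ arises from $\Phi$ by deleting the quantifier $\exists y$, renaming the freed variable $y$ to $y_m$ inside the occurrence $\Psi$ and to $y_{m+1}$ inside every other occurrence, and then existentially quantifying the unbound variables $x_l$ with $l\notin D_m$. Hence the occurrences of $\Phi_m$ are in a canonical bijection with those of $\Phi$: an occurrence $\Theta$ of $\Phi$ corresponds to the occurrence $\Theta_{(m)}$ of $\Phi_m$ obtained from $\Theta$ by replacing $y$ with $y_m$ (if $\Theta$ is $\Psi$) or with $y_{m+1}$ (otherwise), all other variables being untouched. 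In particular $\Psi_1'$ and $\Psi_2'$ --- which by definition are $\Psi_1$ and $\Psi_2$ with $y_m,y_{m+1}$ renamed back to $y$ --- are precisely the $\Phi$-occurrences corresponding to $\Psi_1$ and $\Psi_2$ under this bijection, which proves the first assertion.

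For the second assertion, fix a cycle $C=z_0\Theta_1 z_1\Theta_2\ldots\Theta_r z_r$ of $\Phi_m$ (with $z_0=z_r$) that contains both $\Psi_1$ and $\Psi_2$; such a cycle exists since $\Psi_1\overset{\Phi_{m}}{\sim}\Psi_2$. I would then pass $C$ through the bijection above (replacing each $\Theta_j$ by its $\Phi$-counterpart) while renaming $y_m,y_{m+1}$ to $y$ throughout the vertex list $z_0,\ldots,z_r$, and denote the result $C'$. Conditions (1) and (2) in the definition of a path transfer to $C'$ immediately, because the renaming respects the relation ``a variable occurs in an occurrence'' and the bijection preserves variable sets up to that renaming, and the endpoint identity $z_0=z_r$ is preserved. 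The only real point is condition (3), distinctness of the vertices, which could be violated only if $y_m$ and $y_{m+1}$ both occur as distinct vertices of $C$ and get identified. Here I would invoke the fact that in $\Phi_m$ the variable $y_m$ occurs in exactly one occurrence, namely $\Psi_{(m)}$ (the occurrences $\rho_0(y_i)$ lying outside the copies $\Phi_j$): if $y_m$ were a vertex of $C$, both edges of $C$ incident to it would be $\Psi_{(m)}$, so the length-two detour through $y_m$ could be short-circuited to a single step through $\Psi_{(m)}$, producing a shorter cycle that still contains $\Psi_1$ and $\Psi_2$; iterating, one may assume $y_m$ is not a vertex of $C$, and then merging $y_m$ with $y_{m+1}$ identifies nothing. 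Thus $C'$ is a genuine cycle of $\Phi$ containing $\Psi_1'$ and $\Psi_2'$, so $\Psi_1'\overset{\Phi}{\sim}\Psi_2'$.

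The only non-routine step is this distinctness bookkeeping --- making sure that collapsing $y_m$ and $y_{m+1}$ does not degenerate the cycle --- and it is settled by the observation that $y_m$ lives in a single occurrence of $\Phi_m$. Everything else is a direct reading-off of the definition of the transformation, which is why the claim can fairly be said to ``follow from the definition''.
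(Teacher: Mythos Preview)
The paper gives no explicit proof of this lemma; it merely asserts that it ``follows from the definition of $Trans(\Phi, y, \Psi)$.'' Your argument is a correct and careful unpacking of exactly that, and in particular you isolate the one genuinely nontrivial point --- that collapsing $y_m$ and $y_{m+1}$ to $y$ might destroy distinctness of the cycle vertices --- and resolve it by observing that $y_m$ occurs in only one occurrence of $\Phi_m$, so a detour through $y_m$ can be short-circuited.

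One small quibble: your parenthetical about the occurrences $\rho_0(y_i)$ is beside the point, since those occurrences belong to $Trans(\Phi,y,\Psi)$ but not to $\Phi_m$; the reason $y_m$ sits in a single occurrence of $\Phi_m$ is simply the definition of $RV$, which sends $y$ to $y_m$ only inside $\Psi$. This does not affect the argument.
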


The following lemma states that a trivial union is transformed into a trivial union.

\begin{lemma}\label{trivialnieOstayutsya}

Suppose $U$ is a trivial union in $Trans(\Phi, y, \Psi),$
$\Psi\in Q$ and $Q \in NUnions(\Phi),$
$m\in \{0,1,\ldots,k-1\},$
$U'$ is obtained from $U$
by replacing variable $y$ by variable $y_{m+1}.$
Then $U'$ is a parent of $U,$ and $U'$ is a trivial union in
$Trans(\Phi, y, \Psi),$

\end{lemma}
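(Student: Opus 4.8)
The plan is to show that the copy of a trivial union of $\Phi$ inside $\Phi_m$ is again trivial, the point being that $\Phi_m$ is attached to the rest of $Trans(\Phi, y, \Psi)$ only through its two gateway variables $y_m$ and $y_{m+1}$, and that the transformation leaves the unbound variables untouched.

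First I would pin down $U'$ and the parent relation. Since $U$ is a trivial union of $\Phi$ while $Q \in NUnions(\Phi)$ and $\Psi \in Q$, we have $U \neq Q$, and since distinct unions are disjoint, $\Psi \notin U$. Consequently, copying the occurrences of $U$ into $\Phi_m$ only renames $y$ to $y_{m+1}$ (apart from the routine renaming of the bound variables and the existential quantification of the $x_j$ with $j \notin D_m$), so its image is precisely the $U'$ of the statement and this relabelling is a bijection of $Var(U)$. Using Lemma~\ref{ekvivalentnostostaetsya} together with the observation, already exploited in Lemma~\ref{unionlejitvkusochke}, that no cycle of $Trans(\Phi, y, \Psi)$ meets two distinct copies, one checks that $U'$ is a full $\overset{\Phi_m}\sim$-equivalence class, hence a union of $\Phi_m$ and therefore of $Trans(\Phi, y, \Psi)$, and that $U$ is a parent of $U'$.

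Next I would transfer the two covering variables. Because the transformation does not change the unbound variables, $UVar(Trans(\Phi, y, \Psi)) = \{x_1, \ldots, x_n\} = UVar(\Phi)$. Pick $z_1, z_2 \in Var(U)$ with $KeySet_\Phi(U, z_1) \cup KeySet_\Phi(U, z_2) = UVar(\Phi)$ and let $z_1', z_2' \in Var(U')$ be their images; it suffices to prove $KeyVar_{Trans(\Phi, y, \Psi)}(U', x_j) \in \{z_1', z_2'\}$ for every $j$. The key structural fact is that a path of $Trans(\Phi, y, \Psi)$ which leaves the copy $\Phi_m$ through $y_m$ or $y_{m+1}$ can never return to $\Phi_m$, since it would have to reuse that single shared variable. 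Hence a path from $x_j$ to $Var(U')$ meets $\Phi_m$ in a single connected segment, starting at $x_j$ itself when $j \in D_m$ and at $y_m$ or $y_{m+1}$ when $j \notin D_m$; identifying $y_m$ and $y_{m+1}$ back with $y$ turns the part of this segment after its last visit of $\{y_m, y_{m+1}\}$ into a path of $\Phi$ that ends in $Var(U)$ and starts at $x_j$, respectively at $y$. Therefore $KeyVar_{Trans(\Phi, y, \Psi)}(U', x_j)$ is the image of $KeyVar_\Phi(U, x_j)$ when $j \in D_m$ and the image of the gateway $w := KeyVar_\Phi(U, y)$ when $j \notin D_m$. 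The former lies in $\{z_1, z_2\}$ by the choice of $z_1, z_2$; and $w$ lies in $\{z_1, z_2\}$ too, since triviality of $U$ forces at most two variables of $U$ to carry a nonempty $KeySet_\Phi(U, \cdot)$, and, because $\Phi$ is connected and the occurrence $\Psi$ using $y$ is outside $U$, the gateway $w$ is one of them. This gives the claim, so $U'$ is a trivial union of $Trans(\Phi, y, \Psi)$.

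The step that needs the most care is this $KeyVar$ reduction. On the one hand, because the variable $y$ is split inside $\Phi_m$ between the copy of $\Psi$, where it becomes $y_m$, and all the other occurrences, where it becomes $y_{m+1}$, a path of $\Phi_m$ that uses both $y_m$ and $y_{m+1}$ is not the literal contraction of a single path of $\Phi$, so one must argue with the final segment after the last visit of $\{y_m, y_{m+1}\}$. On the other hand, the genuinely substantive point is that the gateway $w = KeyVar_\Phi(U, y)$ is a covering variable of $U$; this is where triviality of $U$, nontriviality of $Q$ (which is what forced $\Psi \notin U$), and connectedness of $\Phi$ together with the fact that $\Phi \in GF(G)$ realizes a predicate of $MAX(G)$ are used. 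The remaining verification that $U'$ is a full $\overset{\Phi_m}\sim$-class, and hence that $U$ is a parent of $U'$, is a routine application of Lemma~\ref{ekvivalentnostostaetsya}.
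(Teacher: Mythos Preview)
Your overall strategy matches the paper's: take covering variables $z_{1},z_{2}$ of the trivial union $U$ in $\Phi$, show that the gateway $w=KeyVar_{\Phi}(U,y)$ lies in $\{z_{1},z_{2}\}$, and then check that the images $z_{1}',z_{2}'$ cover $U'$ in $Trans(\Phi,y,\Psi)$. Your expansion of the paper's ``it can be easily checked'' --- the case split $j\in D_{m}$ versus $j\notin D_{m}$ and the path-contraction argument reducing $KeyVar_{Trans}(U',x_{j})$ to either $KeyVar_{\Phi}(U,x_{j})$ or $KeyVar_{\Phi}(U,y)$ --- is correct and considerably more detailed than what the paper writes.

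There is one genuine gap in your justification of $w\in\{z_{1},z_{2}\}$. In the second paragraph you claim this follows from connectedness of $\Phi$ together with $\Psi\notin U$. That is not enough: a trivial union $U$ can perfectly well have a third variable $z_{3}$ with $KeySet_{\Phi}(U,z_{3})=\varnothing$ whose branch contains $\Psi$ (and all of $Q$) but no unbound variable. What actually rules this out is the nontriviality of $Q$: if $KeySet_{\Phi}(U,w)=\varnothing$ then every unbound variable $x_{j}$ reaches $Q$ only via $U$ and hence enters $Q$ at the single vertex $v=KeyVar_{\Phi}(Q,w)$, so $KeySet_{\Phi}(Q,v)=UVar(\Phi)$ and $Q$ would be trivial. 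This is exactly the content of the paper's one line ``Since $\Psi\in Q$ and $Q$ is nontrivial, we have $KeyVar_{\Phi}(U,y)\in\{z_{1},z_{2}\}$.'' You do list nontriviality of $Q$ among the ingredients in your third paragraph, but the parenthetical ``(which is what forced $\Psi\notin U$)'' undersells its role --- forcing $\Psi\notin U$ is the easy consequence; the substantive use is the argument just described. The reference to $\Phi$ realizing a predicate of $MAX(G)$ is not needed for this step.
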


\begin{proof}

Since $U$ is trivial,
there exists $z_{1},z_{2}\in Var(U)$ such that
$$KeySet_{\Phi}(U,z_{1})\cup KeySet_{\Phi}(U,z_{2}) = UVar(\Phi).$$
Since $\Psi\in Q$ and $Q$ is nontrivial, we have
$KeyVar_{\Phi}(U,y)\in \{z_{1},z_{2}\}.$
Hence, it can be easily checked that $U'$ is a trivial union in
$Trans(\Phi, y, \Psi).$

\end{proof}

\begin{lemma}\label{estPredok}

Suppose $U$ is a nontrivial union of $Trans(\Phi, y, \Psi),$
$\Psi\in Q$ and $Q \in NUnions(\Phi),$
then one of the following conditions holds
\begin{enumerate}
\item  $|U|< MaxUnion(\Phi);$
\item $|U|= MaxUnion(\Phi),$
there exists $U_{0} \in NUnions(\Phi)$ such that $U_{0}$ is a parent of $U.$
\end{enumerate}

\end{lemma}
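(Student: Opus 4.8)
The plan is to lift $U$ to a union of one of the copies $\Phi_m$, project it back down to a union of $\Phi$, and then compare cardinalities. By Lemma~\ref{unionlejitvkusochke} there is an $m$ with $U$ a union of $\Phi_m$; since the copies $\Phi_0,\ldots,\Phi_{k-1}$ are chained along the cut vertices $y_0,\ldots,y_k$, no path of $Trans(\Phi,y,\Psi)$ joining two occurrences of $\Phi_m$ can leave $\Phi_m$, so the equivalence relation of $Trans(\Phi,y,\Psi)$ restricted to $\Phi_m$ is just the one of the standalone formula $\Phi_m$ and $U$ is a union of $\Phi_m$ on its own. Let $U_0$ be the image of $U$ under the natural bijection between the occurrences of $\Phi_m$ and those of $\Phi$, equivalently the result of replacing $y_m,y_{m+1}$ by $y$ in $U$. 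By Lemma~\ref{ekvivalentnostostaetsya} the occurrences of $U_0$ are pairwise equivalent in $\Phi$, so $U_0\subseteq\widehat U$ for a union $\widehat U$ of $\Phi$, and $|U_0|=|U|$.

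The decisive remark is that passing from $\Phi_m$ back to $\Phi$ merely merges $y_m$ with $y_{m+1}$, so the paths of $\Phi$ that are not already paths of $\Phi_m$ are exactly those passing through the vertex $y$ via the occurrence $\Psi$; hence $\Psi$ is the only occurrence whose equivalence class can strictly grow, and its class in $\Phi$ is $Q$. Thus, if $\widehat U\neq Q$ then no new equivalence touches $U_0$ and $U_0=\widehat U$, so $U_0$ is a union of $\Phi$ and a parent of $U$; if $\widehat U=Q$ then $U_0\subseteq Q$. Cardinalities now settle the dichotomy in the statement. If $\widehat U=Q$ then $|U|=|U_0|\le|Q|\le MaxUnion(\Phi)$: when $|U|<MaxUnion(\Phi)$ the first alternative holds, and when $|U|=MaxUnion(\Phi)$ we get $|U_0|=|Q|$, hence $U_0=Q\in NUnions(\Phi)$ is a parent of $U$ and the second alternative holds. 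If $\widehat U\neq Q$ then $U_0=\widehat U$, and provided $\widehat U$ is nontrivial the same count gives $|U|=|\widehat U|\le MaxUnion(\Phi)$, with $\widehat U$ itself serving as the required parent in the boundary case $|U|=MaxUnion(\Phi)$. So everything reduces to the claim: if $U$ is a nontrivial union of $\Phi_m$ with $\Psi\notin U$, then $\widehat U=U_0$ is a nontrivial union of $\Phi$.

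This last claim is the substantive step, and it is the analogue for nontrivial unions of Lemma~\ref{trivialnieOstayutsya}. Since $\Psi\notin U$ the vertex $y_m$ meets only the copy of $\Psi$, so $y_m\notin Var(U)$, while $y_{m+1}$ is the only other vertex of $\Phi_m$ shared with the rest of $Trans(\Phi,y,\Psi)$. Hence every path of $Trans(\Phi,y,\Psi)$ from an unbound variable lying in a copy to the left of $\Phi_m$ to $Var(U)$ runs through $y_m$, so its key variable equals $KeyVar_{Trans(\Phi,y,\Psi)}(U,y_m)$; symmetrically every unbound variable in a copy to the right of $\Phi_m$ has key variable $KeyVar_{Trans(\Phi,y,\Psi)}(U,y_{m+1})$; and for unbound variables inside $\Phi_m$ the key variable agrees, through the bijection, with $KeyVar_\Phi(\widehat U,\cdot)$, while $KeyVar_{Trans(\Phi,y,\Psi)}(U,y_m)$ and $KeyVar_{Trans(\Phi,y,\Psi)}(U,y_{m+1})$ correspond to the two ways the single vertex $y$ of $\Phi$ reaches $\widehat U$ --- along $\Psi$ and along the remaining occurrences at $y$. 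Feeding this dictionary into the definition of a trivial union turns a pair $z_1,z_2\in Var(U)$ witnessing triviality of $U$ into a pair witnessing triviality of $\widehat U$, and conversely, so nontriviality of $U$ forces nontriviality of $\widehat U$. I expect the main obstacle to be precisely this translation across the bridges $y_m,y_{m+1}$: one must check that the three groups of unbound variables of $Trans(\Phi,y,\Psi)$ --- those left of $\Phi_m$, those right of $\Phi_m$, and those inside $\Phi_m$ --- collapse compatibly with the partition of $UVar(\Phi)$ given by $KeyVar_\Phi(\widehat U,\cdot)$, the only delicate point being that $y$ is one vertex of $\Phi$ but splits into $y_m,y_{m+1}$ in $\Phi_m$, which does no harm because $\Psi\notin\widehat U$.
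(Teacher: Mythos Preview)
Your proof is correct and follows the same skeleton as the paper's: lift $U$ into a single copy $\Phi_m$ via Lemma~\ref{unionlejitvkusochke}, project down to a containing union of $\Phi$ via Lemma~\ref{ekvivalentnostostaetsya}, argue that this union is nontrivial, and then compare cardinalities. The one difference is that the paper obtains nontriviality in a single stroke by citing Lemma~\ref{trivialnieOstayutsya} in contrapositive form (a trivial union of $\Phi$ always has a trivial child in $Trans(\Phi,y,\Psi)$, so a nontrivial $U$ cannot sit inside a trivial $\hat U$); your case split on $\hat U=Q$ versus $\hat U\neq Q$, the separate verification that $U_0=\hat U$ in the second case, and the key-variable dictionary in your last paragraph are all subsumed by that one citation and are not needed for the argument.
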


\begin{proof}

Suppose $U = \{\Psi_{1},\Psi_{2},\ldots,\Psi_{r}\}.$
It follows from Lemma \ref{unionlejitvkusochke} that
$U$ is a union of $\Phi_{m}$
for some $m\in\{0,1,\ldots,k-1\}.$
Suppose $\Psi_{i}'$ is obtained from $\Psi_{i}$
by replacing variables $y_{m},y_{m+1}$ by variable $y.$
It follows from Lemma \ref{ekvivalentnostostaetsya} that
$\Psi_{i}'\overset{\Phi}\sim \Psi_{j}'$ for every $i,j.$
Hence there exists a union $U_{0}$ of $\Phi$
such that $\{\Psi_{1}',\Psi_{2}',\ldots,\Psi_{r}'\}\subseteq U_{0}.$
By Lemma \ref{trivialnieOstayutsya}, $U_{0}$ cannot be trivial.
Therefore 
$$|U| \le |U_{0}| \le MaxUnion(\Phi).$$

If $|U| < MaxUnion(\Phi),$ then the lemma is proved.
Suppose that $|U| = MaxUnion(\Phi).$
Then $\{\Psi_{1}',\Psi_{2}',\ldots,\Psi_{r}'\}= U_{0}$
and $U_{0}$ is a parent of $U.$
This completes the proof.

\end{proof}

%
%
%
%
%
%

\begin{lemma}\label{UnionNeSoderjitPsi}

Suppose $U\in Unions(\Phi_{m}),$
$U_{0}\in Unions(\Phi)$ is a parent of $U,$
$\Psi\notin U_{0}.$
Then $Par(U) = Par(U_{0}).$

\end{lemma}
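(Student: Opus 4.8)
The plan is to read off $Par(U)$ and $Par(U_0)$ directly from the definition $Par(V)=2+Arity(V)-|Var(V)|$ and to check that, thanks to the hypothesis $\Psi\notin U_0$, passing from the parent $U_0$ to its child $U$ alters neither $Arity$ nor $|Var|$.

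First I would unwind the definition of ``parent''. Since $U\in Unions(\Phi_m)$ and $U_0$ is a parent of $U$, the union $U_0$ is obtained from $U$ by replacing the variables $y_m$ and $y_{m+1}$ everywhere by the single variable $y$. By construction $\Phi_m$ is a copy of $RV(RQ(\Phi,y),y,y_m,y_{m+1},\Psi)$ with some unbound variables existentially quantified and the remaining bound variables renamed apart; consequently the occurrences of $\Phi_m$ correspond one-to-one with the occurrences of $\Phi$, the correspondence renaming $y$ to $y_m$ inside the copy of $\Psi$ and to $y_{m+1}$ in every other occurrence. Renaming variables does not change the arity of an occurrence, so $ar(\chi)=ar(\chi_0)$ for every $\chi\in U$ and its parent $\chi_0\in U_0$; summing gives $Arity(U)=Arity(U_0)$.

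The hypothesis is used in comparing $Var(U)$ with $Var(U_0)$. The variable $y_m$ occurs in $\Phi_m$ only inside the copy of $\Psi$; since $\Psi\notin U_0$, no occurrence of $U$ is that copy, so $y_m\notin Var(U)$ and every occurrence of $U$ arises from its parent by the single renaming $y\mapsto y_{m+1}$ (modulo the bijective apart-renaming of the other bound variables). As $y_{m+1}$ is a fresh variable, it does not already occur in $Var(U_0)\subseteq Var(\Phi)$, so $y\mapsto y_{m+1}$ is a bijection from $Var(U_0)$ onto $Var(U)$; hence $|Var(U)|=|Var(U_0)|$. Therefore
$$Par(U)=2+Arity(U)-|Var(U)|=2+Arity(U_0)-|Var(U_0)|=Par(U_0).$$

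I expect the only delicate point to be exactly this bijectivity: one must make sure that at most the one fresh variable $y_{m+1}$ --- and not also $y_m$ --- shows up in $U$. This is precisely what $\Psi\notin U_0$ buys us: if $\Psi$ lay in $U_0$, then $U$ would contain both $y_m$ and $y_{m+1}$, the substitution $\{y_m\mapsto y,\ y_{m+1}\mapsto y\}$ would identify two distinct variables of $U$, and we would instead find $|Var(U)|=|Var(U_0)|+1$, i.e. $Par(U)=Par(U_0)+1$. Everything else is bookkeeping.
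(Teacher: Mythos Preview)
Your proof is correct and follows exactly the paper's approach, which is a two-line argument recording that $Arity(U)=Arity(U_0)$ (parenthood is a renaming) and $|Var(U)|=|Var(U_0)|$ (because $\Psi\notin U_0$); you have simply supplied the bookkeeping. One small slip in your closing parenthetical: when $\Psi\in U_0$ and $y$ is used in $U_0$ at least twice, $|Var(U)|=|Var(U_0)|+1$ yields $Par(U)=Par(U_0)-1$, not $+1$ (this is precisely the content of the lemma immediately following in the paper).
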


\begin{proof}

Since $U_{0}$ is a parent, we have
$Arity(U) = Arity(U_{0}).$
Also, $|Var(U)|= |Var(U_{0})|$ because $\Psi\notin U_{0}.$
This completes the prove.

\end{proof}

\begin{lemma}\label{UnionSoderjitPsi}

Suppose $U\in Unions(\Phi_{m}),$
$U_{0}\in Unions(\Phi)$ is a parent of $U,$
$\Psi\notin U_{0},$
$y$ is used in $U_{0}$ at least twice.
Then $Par(U) = Par(U_{0})-1.$

\end{lemma}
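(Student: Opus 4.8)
I would treat the statement as a pure counting identity for the parameter $Par$. The first move is to unwind the parent relation: by definition the occurrences of $U_{0}$ are exactly the occurrences of $U$ with the substitution $y_{m}\mapsto y$, $y_{m+1}\mapsto y$ applied, and this substitution neither merges nor deletes nor creates occurrences and leaves the arity of each occurrence untouched. Hence it gives an arity-preserving bijection between the occurrences of $U$ and those of $U_{0}$, so $Arity(U)=Arity(U_{0})$, and therefore
$$Par(U)-Par(U_{0})=|Var(U_{0})|-|Var(U)| .$$
So the whole lemma reduces to the claim $|Var(U)|=|Var(U_{0})|+1$.

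Next I would compare the two variable sets directly. The substitution $y_{m},y_{m+1}\mapsto y$ is the identity outside $\{y_{m},y_{m+1}\}$, and — since all bound variables of the copies $\Phi_{0},\dots,\Phi_{k-1}$ are kept pairwise distinct — it identifies no pair of distinct variables of $U$ other than possibly $y_{m}$ with $y_{m+1}$. Consequently $|Var(U_{0})|$ equals $|Var(U)|$ if at most one of $y_{m},y_{m+1}$ lies in $Var(U)$, and equals $|Var(U)|-1$ if both do (these two variables being genuinely distinct, as they are produced at different stages of $Trans$). So it remains only to show that, under the hypotheses, both $y_{m}\in Var(U)$ and $y_{m+1}\in Var(U)$.

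This last point is where the two hypotheses enter, and it is the part I would write out most carefully. Recall from the construction of $Trans(\Phi,y,\Psi)$ that, inside $\Phi_{m}$, the variable $y_{m}$ occurs in precisely one occurrence — the copy of $\Psi$ (it is the name given to the occurrence of $y$ inside $\Psi$ by $RV(\cdot,y,y_{m},y_{m+1},\Psi)$) — while $y_{m+1}$ occurs exactly in the copies of those occurrences of $\Phi$ that used $y$ and differ from $\Psi$. Transporting this through the occurrence bijection of the first step: the assumption that $y$ is used in $U_{0}$ at least twice forces $U$ to contain, besides the occurrence corresponding to $\Psi$, at least one further occurrence carrying $y_{m+1}$, so $y_{m+1}\in Var(U)$; and the hypothesis concerning $\Psi$ puts the copy of $\Psi$ — the sole bearer of $y_{m}$ — among the occurrences of $U$, so $y_{m}\in Var(U)$. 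Together with the previous paragraph this yields $|Var(U)|=|Var(U_{0})|+1$, i.e. $Par(U)=Par(U_{0})-1$.

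The only real obstacle is the bookkeeping just described: one must pin down exactly which occurrences of $\Phi_{m}$ correspond to which occurrences of $U_{0}$ under the renaming $y_{m},y_{m+1}\mapsto y$, and check on that dictionary that the $y_{m}$-bearing occurrence and at least one $y_{m+1}$-bearing occurrence genuinely belong to $U$. That is precisely the step in which the hypothesis on $\Psi$ and the hypothesis on the multiplicity of $y$ in $U_{0}$ are used; everything else is a mechanical consequence of the definitions of $Arity$, $Par$, and the parent relation, in exact parallel with the proof of Lemma~\ref{UnionNeSoderjitPsi}, where no identification takes place and so $Par$ is left unchanged.
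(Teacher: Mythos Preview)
Your argument is correct and is exactly the paper's approach, just written out in more detail: the paper's proof consists of the two lines ``$Arity(U)=Arity(U_{0})$ since $U_{0}$ is a parent'' and ``$|Var(U)|=|Var(U_{0})|+1$ because we have $y_{m}$ and $y_{m+1}$ in $U$ instead of $y$ in $U_{0}$,'' which is precisely the skeleton you flesh out. One remark: the printed hypothesis ``$\Psi\notin U_{0}$'' is evidently a typo for ``$\Psi\in U_{0}$'' (compare the label \emph{UnionSoderjitPsi}, ``union contains $\Psi$'', with the companion Lemma~\ref{UnionNeSoderjitPsi}, and note that $y_{m}$ can enter $Var(U)$ only through the copy of $\Psi$); you have tacitly read the hypothesis this way, and that is the only reading under which either your proof or the paper's makes sense.
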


\begin{proof}

Since $U_{0}$ is a parent, we have
$Arity(U) = Arity(U_{0}).$
Also, $|Var(U)|= |Var(U_{0})|+1$
because we have $y_{m}$ and $y_{m+1}$ in $U$ instead of $y$ in $U_{0}.$

\end{proof}

\begin{lemma}\label{minmaxkeyset}

Suppose $U_{0},U\in Max(\Phi),$ %
$U\neq U_{0},$
$y \in Var(U_{0}),$
$$MaxKeySet_{\Phi}(U_{0})=MinMaxKeySet(\Phi).$$
Then $$|KeySet_{\Phi}(U,KeyVar_{\Phi}(U,y))| = MaxKeySet_{\Phi}(U).$$

\end{lemma}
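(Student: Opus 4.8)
The plan is to exploit the ``block structure'' of $\Phi$: distinct unions behave like blocks of a graph, so that removing a union $U$ from $\Phi$ splits the remainder into branches, one hanging off each variable of $Var(U)$, and $KeySet_{\Phi}(U,u)$ collects exactly the unbound variables lying in the branch off $u$. Consequently $UVar(\Phi)$ is the disjoint union of the sets $KeySet_{\Phi}(U,u)$, $u\in Var(U)$, so $\sum_{u\in Var(U)}|KeySet_{\Phi}(U,u)| = |UVar(\Phi)|$. All of this should be read off from Lemma~\ref{edinstvenostPeremennoyVUnion} together with the cycle/equivalence machinery defining unions.

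First I would fix $v = KeyVar_{\Phi}(U,y)$ and observe, using Lemma~\ref{edinstvenostPeremennoyVUnion} and the fact that $U_{0}$ is a union, that $U_{0}$ lies in the $v$-branch of $U$: every path from a variable of $U_{0}$ to a variable of $U$ passes through $v$. Dually, let $w = KeyVar_{\Phi}(U_{0},v)$; then $KeyVar_{\Phi}(U_{0},z) = w$ for every $z\in Var(U)$, so all of $U$ lies in the $w$-branch of $U_{0}$. The key geometric claim is: for every $u\in Var(U)$ with $u\neq v$ one has $KeySet_{\Phi}(U,u)\subseteq KeySet_{\Phi}(U_{0},w)$. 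Indeed, if $KeyVar_{\Phi}(U,x)=u\neq v$, then any path from $x$ to a variable of $U_{0}$ contains $u$, must also reach the $v$-branch and so contains $v$, and, being a path, runs thereafter from $v$ to $Var(U_{0})$, hence contains $w$; thus $KeyVar_{\Phi}(U_{0},x)=w$. The case $Var(U)\cap Var(U_{0})\neq\varnothing$ is the degenerate one in which the shared cut variable equals both $v$ and $w$, and the same argument applies. Since the sets $KeySet_{\Phi}(U,u)$ are pairwise disjoint, summing over $u\neq v$ gives $\sum_{u\neq v}|KeySet_{\Phi}(U,u)|\le |KeySet_{\Phi}(U_{0},w)|\le MaxKeySet_{\Phi}(U_{0}) = MinMaxKeySet(\Phi)$.

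Now assume, for contradiction, that $|KeySet_{\Phi}(U,v)|<MaxKeySet_{\Phi}(U)$, and pick $v'\in Var(U)$ realising $MaxKeySet_{\Phi}(U)$; then $v'\neq v$. Since $U\in Max(\Phi)$ we have $MaxKeySet_{\Phi}(U)\ge MinMaxKeySet(\Phi)$, so the inequality of the previous paragraph applied to $u=v'$ forces $|KeySet_{\Phi}(U,v')| = |KeySet_{\Phi}(U_{0},w)| = MinMaxKeySet(\Phi)$; as $KeySet_{\Phi}(U,v')\subseteq KeySet_{\Phi}(U_{0},w)$ with equal finite cardinality, $KeySet_{\Phi}(U,v') = KeySet_{\Phi}(U_{0},w)$. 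Then disjointness forces $KeySet_{\Phi}(U,u)=\varnothing$ for every $u\in Var(U)\setminus\{v,v'\}$, so $KeySet_{\Phi}(U,v)\cup KeySet_{\Phi}(U,v') = UVar(\Phi)$, i.e. $U$ is a trivial union --- contradicting $U\in NUnions(\Phi)$. Hence $|KeySet_{\Phi}(U,v)|\ge MaxKeySet_{\Phi}(U)$, and since $v\in Var(U)$ the reverse inequality is immediate, which gives the claim.

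I expect the main obstacle to be formalising the block-structure facts from the definitions --- the well-definedness of $w$ and the containment $KeySet_{\Phi}(U,u)\subseteq KeySet_{\Phi}(U_{0},w)$ for $u\neq v$ --- which is bookkeeping with paths and the equivalence relation on occurrences; once the picture ``two distinct unions are separated by a single cut variable'' is in hand, the rest is the short counting argument above together with the nontriviality of $U$.
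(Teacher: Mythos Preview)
Your proposal is correct and follows essentially the same route as the paper: both set $t=KeyVar_{\Phi}(U,y)$ and $t_{0}=KeyVar_{\Phi}(U_{0},t)$, establish the containment $KeySet_{\Phi}(U,z)\subseteq KeySet_{\Phi}(U_{0},t_{0})$ for every $z\in Var(U)$ with $z\neq t$, and then use nontriviality of $U$ to reach a contradiction with $MaxKeySet_{\Phi}(U_{0})=MinMaxKeySet(\Phi)$. The only cosmetic difference is in the endgame: the paper notes that nontriviality gives at least three variables of $U$ with nonempty $KeySet$, hence $\sum_{z\neq t}|KeySet_{\Phi}(U,z)|>MaxKeySet_{\Phi}(U)\ge MinMaxKeySet(\Phi)$ directly; you instead squeeze the equalities and conclude that $U$ would be trivial --- the two are equivalent.

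One remark on your path argument for the key containment: the sentence ``any path from $x$ to a variable of $U_{0}$ contains $u$'' is not immediate, since $KeyVar_{\Phi}(U,x)=u$ only controls paths from $x$ to $Var(U)$, not to $Var(U_{0})$. The paper phrases the same step as the existence of a path from $t_{0}$ to $x$ meeting $Var(U_{0})$ only in $t_{0}$ (built by concatenating a path $x\!\to\!u$, a path inside $U$ from $u$ to $t$, and a path $t\!\to\!t_{0}$), which then pins down $KeyVar_{\Phi}(U_{0},x)=t_{0}$ via the uniqueness in Lemma~\ref{edinstvenostPeremennoyVUnion}. Your block-structure picture is the right intuition, and you already flag that this is where the bookkeeping lives; just be aware that the cleanest way to cash it out is by \emph{constructing} such a path rather than arguing about all paths.
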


\begin{proof}

Let $\{z_{1},z_{2},\ldots,z_{r}\} = \{z\in Var(U) \;|\; KeySet_{\Phi}(U,z_{i})\neq \varnothing\}.$
Hence $UVar(\Phi) = \bigcup \limits_{i} KeySet_{\Phi}(U,z_{i}).$

\begin{figure}
\centerline{
\xymatrix{
*++[o][F-]{x_{1}} \ar@{-}[d]_{\Xi_{1}}& {z_{1}}\ar@{}[d]|(0.7){\txt{Union $U$}} \ar@{-}[ld]_{\Xi_{2}}\ar@{-}[rd]^{\Xi_{3}} & *++[o][F-]{x_{2}}\ar@{-}[l]_{\Xi_{4}}&
 *++[o][F-]{x_{3}} \ar@{-}[d]^{\Xi_{5}}& & y \ar@{}[d]|(0.7){\txt{Union $U_{0}$}}& *++[o][F-]{x_{4}} \ar@{-}[l]_{\Xi_{6}}& \\
z_{2} \ar@{-}[rr]_{\Xi_{7}}& & {z_{3}=t} \ar@{-}[r]^{\Xi_{8}}& y_{1} \ar@{-}[r]^{\Xi_{9}}&
t_{0} \ar@{-}[ru]^{\Xi_{11}} \ar@{-}[rr]_{\Xi_{10}}& & y_{2}  \ar@{-}[lu]_{\Xi_{12}} \ar@{-}[r]^{\Xi_{13}}& *++[o][F-]{x_{5}}
}
}
\caption{}\label{TwoUnions}
\end{figure}

Let $t = KeyVar_{\Phi}(U,y),$
$t_{0} = KeyVar_{\Phi}(U_{0},t)$
(see an example in Figure~\ref{TwoUnions}).
Perhaps $t_{0} = t.$
Assume that $$|KeySet_{\Phi}(U,t)| < MaxKeySet_{\Phi}(U),$$
then $KeySet_{\Phi}(U,z_{i})= MaxKeySet_{\Phi}(U)$ for some $z_{i}\neq t.$
Since $U$ is nontrivial, we have $r>2$ and
$\sum\limits_{z_{i}\neq t} |KeySet_{\Phi}(U,z_{i})|> MaxKeySet_{\Phi}(U).$

For every $z_{i}$ such that $z_{i}\neq t$
and for every $x\in KeySet_{\Phi}(U,z_{i})$
there exists a path from $t_{0}$ to $x$ that does not contain any variables from $U_{0}$ except $t_{0}.$
Hence $KeySet_{\Phi}(U,z_{i})\subseteq KeySet_{\Phi}(U_{0},t_{0})$ for every $z_{i}\neq t.$ Therefore,
\begin{multline*}MaxKeySet_{\Phi}(U_{0})\ge |KeySet_{\Phi}(U_{0},t_{0})|\ge \\ \ge
\sum\limits_{z_{i}\neq t} |KeySet_{\Phi}(U,z_{i})|> MaxKeySet_{\Phi}(U).\end{multline*}
This contradicts the condition $MaxKeySet_{\Phi}(U_{0})=MinMaxKeySet(\Phi).$
So, we have $|KeySet_{\Phi}(U,t)| = MaxKeySet_{\Phi}(U).$

\end{proof}

\begin{lemma}\label{unionMaxKeySet}

Suppose $U_{0},U\in Max(\Phi),$ $U\neq U_{0},$
$\Psi\in U_{0},$
$\Omega = Trans(\Phi, y, \Psi),$
$MaxKeySet_{\Phi}(U_{0})=MinMaxKeySet(\Phi).$
Let $W$ be the set of all $U'$ such that
$U'$ is nontrivial and $U$ is a parent of $U'.$

Then $MaxKeySet_{\Omega}(U')\ge MaxKeySet_{\Phi}(U)$ for every $U'\in W.$
Moreover, if $|W|>1$ then
$MaxKeySet_{\Omega}(U')>MaxKeySet_{\Phi}(U)$ for every $U'\in W.$

\end{lemma}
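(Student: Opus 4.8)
The plan is to track how the transformation $\Omega = Trans(\Phi, y, \Psi)$ moves the $KeySet$ data from a nontrivial union $U \in Max(\Phi)$ to its descendants. By Lemma~\ref{unionlejitvkusochke} every union $U'$ with $U$ as parent lives inside some copy $\Phi_m$, and the only difference between $U'$ and its parent $U$ is that the single variable $y$ (appearing at least once in $U$, since $\Psi \notin U$ here because $\Psi \in U_0 \ne U$) is split into the two variables $y_m, y_{m+1}$, together with the fact that the unbound variables of $\Phi$ are now distributed among the $k$ copies according to the sets $D_0, \dots, D_{k-1}$. So first I would fix $U' \in W$, say $U'$ is a union of $\Phi_m$, and describe $Var(U')$ as $Var(U)$ with $y$ replaced by $\{y_m, y_{m+1}\}$.

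Next I would compare $KeySet_{\Omega}(U', z)$ with $KeySet_{\Phi}(U, z)$ for $z \in Var(U) \setminus \{y\}$. The crucial point is where the distinguished variable $y$ of $U$ sat relative to $U_0$: by hypothesis $y \in Var(U_0)$ and, since $\Psi \in U_0$, the occurrence $\Psi$ carrying $y$ is being duplicated. Let $t = KeyVar_{\Phi}(U,y)$. By Lemma~\ref{minmaxkeyset}, because $MaxKeySet_{\Phi}(U_0) = MinMaxKeySet(\Phi)$, we have $|KeySet_{\Phi}(U, t)| = MaxKeySet_{\Phi}(U)$; so $t$ is (one of) the variable(s) of $U$ realizing the maximum $KeySet$, and $t$ is exactly the variable of $U$ through which all paths to $U_0$ (hence to $y$, hence to the copies $y_0, \dots, y_k$) must pass. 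After the transformation, every unbound variable $x \in X$ that had $KeyVar_{\Phi}(U,x) \ne t$ keeps its key variable in some copy and contributes to exactly one $KeySet_{\Omega}(U', \cdot)$ with the copy index determined by $i_{x}$; whereas the unbound variables whose key variable was $t$ — there are $MaxKeySet_{\Phi}(U)$ of them — together with the new "threaded" contributions coming along the chain $y_0 \cdots y_k$, all accumulate at the variable of $U'$ sitting on the $t$-side, call it $t'$. I would show $KeySet_{\Phi}(U,t) \subseteq KeySet_{\Omega}(U', t')$ (up to the renaming $x_j \mapsto x_{i_j, j}$ and the subsequent $Strike$ identifications, which only identify variables and never decrease a $KeySet$), giving $MaxKeySet_{\Omega}(U') \ge |KeySet_{\Omega}(U',t')| \ge MaxKeySet_{\Phi}(U)$.

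For the "moreover" clause, suppose $|W| > 1$, i.e. at least two distinct copies $\Phi_{m_1}, \Phi_{m_2}$ each contain a descendant of $U$ that is still nontrivial. The chain $\rho_0(y_0) \wedge \bigwedge \Phi_i' \wedge \rho_0(y_k)$ forces a path from $y_0$ through every $y_i$; since $U$ surrounds $y$ in $\Phi$, in $\Omega$ the variable $t'$ of the descendant $U'$ in copy $\Phi_m$ is a cut vertex separating the "$y_0$-side" copies from the "$y_k$-side" copies, so $KeySet_{\Omega}(U', t')$ picks up not only the $MaxKeySet_{\Phi}(U)$ variables inherited from $t$ but also at least one genuinely new unbound variable — one coming from a copy on the far side whose own descendant of $U$ is nontrivial, hence nonempty as a source of unbound variables routed through $t'$. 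That strict extra contribution gives $MaxKeySet_{\Omega}(U') > MaxKeySet_{\Phi}(U)$ for every $U' \in W$.

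The main obstacle I anticipate is bookkeeping precisely which unbound variables end up in which copy and at which variable of $U'$ after both the duplication and the $Strike$-style re-identification encoded in the $z_{i,j}$ and $D_m$; in particular, one must be careful that the identifications performed when extracting $\rho$ from $\rho'$ (via Lemma~\ref{VycherkivanieSvobodnih}) merge copies of unbound variables in a way that can only \emph{enlarge} each $KeySet_{\Omega}$, never shrink it, and that the cut-vertex structure used in the strict inequality survives those identifications. Handling the degenerate cases — $t_0 = t$, or $y$ used exactly once versus at least twice in $U_0$, or descendants that have collapsed to triviality (which is why we restrict to $U' \in W$) — is where the routine-but-delicate work lies.
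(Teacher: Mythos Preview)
Your proposal is correct and follows essentially the same route as the paper: identify $t = KeyVar_{\Phi}(U,y)$, invoke Lemma~\ref{minmaxkeyset} to get $|KeySet_{\Phi}(U,t)| = MaxKeySet_{\Phi}(U)$, then show $KeySet_{\Omega}(U',t') \supseteq KeySet_{\Phi}(U,t)$ for the corresponding variable $t'$ in $U'$, and for the strict inequality use nontriviality of a second descendant $U_j$ to produce an extra unbound variable on the far side of the cut at $t'$. One small inaccuracy: since $\Psi\in U_0\neq U$ we have $\Psi\notin U$, so in $U'$ every occurrence of $y$ is renamed to $y_{m+1}$ (not split into $\{y_m,y_{m+1}\}$), and indeed $y$ need not lie in $Var(U)$ at all---but your argument goes through unchanged once you work with $t$ and $t_i = KeyVar_{\Omega}(U_i, y_{p_i+1})$ as the paper does.
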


\begin{proof}

Suppose $W = \{U_{1},\ldots,U_{s}\}.$
Suppose $U_{i}$ is a union of $\Phi_{p_{i}}$ for every $i.$

Let $\{z_{1},\ldots,z_{r}\} = \{z\in Var(U) \;|\; KeySet_{\Phi}(U,z)\neq \varnothing\}.$
Hence $$UVar(\Phi) = \bigcup \limits_{i} KeySet_{\Phi}(U,z_{i}).$$

Let $t = KeyVar_{\Phi}(U,y),$
$t_{i} = KeyVar_{\Omega}(U_{i},y_{p_{i}+1}).$

By Lemma \ref{minmaxkeyset}, we have $|KeySet_{\Phi}(U,t)| = MaxKeySet_{\Phi}(U).$

It can be easily checked that
\begin{multline*}
MaxKeySet_{\Omega}(U_{i})\ge |KeySet_{\Omega}(U_{i},t_{i})|\ge \\ \ge |KeySet_{\Phi}(U,t)| =  MaxKeySet_{\Phi}(U) 
\end{multline*}
Since $U_{i}$  is nontrivial,
we have $|KeySet_{\Phi}(U_{i},t_{i})|<|UVar(\Omega)|.$
If $|W|>1$ then for $i,j\le |W|,$ $i\neq j$  we have
\begin{multline*}MaxKeySet_{\Omega}(U_{i})\ge |KeySet_{\Omega}(U_{i},t_{i})|\ge \\ 
\ge |KeySet_{\Phi}(U,t)| +(|UVar(\Omega)|-|KeySet_{\Phi}(U_{j},t_{j})|)>\\
> |KeySet_{\Phi}(U,t)| =  MaxKeySet_{\Phi}(U)
\end{multline*}
The lemma is proved.

\end{proof}

Suppose $\Phi'$ is a subformula of $\Phi,$
$y\in Var(\Phi').$
By $Part(\Phi,\Phi',y)$ we denote the formula defined by following conditions:

\begin{enumerate}

\item $Part(\Phi,\Phi',y)$ contains occurrence $\Psi$ iff
\begin{enumerate}

\item $\Phi$ contains occurrence $\Psi,$

\item $\Phi'$ does not contain occurrence $\Psi,$

\item 
there exists a path in $\Phi$ from $y$ to some variable of $\Psi$
that does not contain any occurrences from $\Phi';$

\end{enumerate}

\item $Part(\Phi,\Phi',y)$ contains $\exists z$ iff

\begin{enumerate}

\item
$\Phi$ contains $\exists z,$

\item
$z\notin Var(\Phi'),$

\item
there exists a path in $\Phi$ from $y$ to $z$ that does not contain any occurrences from $\Phi'.$

\end{enumerate}

\end{enumerate}

This notion looks complicated, but it is not. In Figure~\ref{partExample} you
can see $Part(\widetilde \Phi, U_{1}, z_{6}),$
where $\widetilde \Phi$ is the formula in Figure \ref{bigExample},
$U_{1} = \{\Xi_{5},\Xi_{6},\Xi_{7},\Xi_{8},\Xi_{13}\}.$

\begin{figure}
\centerline{
\xymatrix{
&  & *++[o][F-]{x_{3}}
\ar@{-}[dr]^{\Xi_{3}}&  & *++[o][F-]{x_{4}} \ar@{-}[dl]_{\Xi_{4}} &  \\
 & {z_{3}} \ar@{-}[dl]_{\Xi_{9}} \ar@{-}[dr]^{\Xi_{10}}&  & {z_{4}} \ar@{-}[dl]_{\Xi_{11}} \ar@{-}[dr]^{\Xi_{12}} &  &  \\
z_{6} \ar@{-}[d]^{\Xi_{15}} \ar@{-}[rr]^{\Xi_{16}} & & z_{7} \ar@{-}[rd]^{\Xi_{17}}& & z_{8}\ar@{-}[ld]_{\Xi_{18}} \ar@{-}[r]^{\Xi_{19}}& *++[o][F-]{x_{5}} \\
*++[o][F-]{x_{7}} & & *++[o][F-]{x_{8}} & z_{9} \ar@{-}[l]_{\Xi_{20}} \ar@{-}[r]^{\Xi_{21}} & *++[o][F-]{x_{9}} &
}
}
\caption{$Part(\widetilde \Phi, U_{1}, z_{6})$}\label{partExample}
\end{figure}
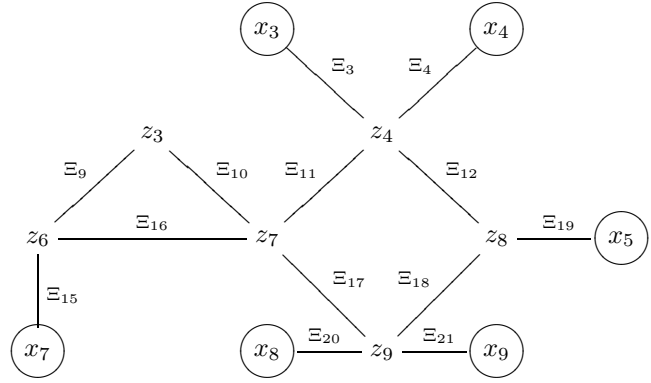

\begin{lemma}\label{UbratTrivialnie}


Suppose $\Phi\in GF(G),$ $MaxUnion(\Phi)=1.$
Then there exists a tree-formula $\Phi'\in GF(G).$

\end{lemma}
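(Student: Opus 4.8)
The claim is that if $\Phi \in GF(G)$ has $MaxUnion(\Phi) = 1$ — meaning every nontrivial union is a single occurrence, so really every cycle in $\Phi$ lives inside a trivial union — then we can produce a genuine tree-formula in $GF(G)$. The idea is that trivial unions, being cyclic bundles of occurrences whose key-sets already split among only two variables, are ``redundant'' for the essential predicate being realized: because $\rho \in MAX(G)$ is essential, an essential tuple cannot see more than two coordinates through a single union, and so the whole cyclic union can be replaced by a small acyclic gadget (drawn from $[G] \cap (R_k^1 \cup R_k^2) \subseteq G$, which is available by the standing assumption on $G$). So the plan is: (1) locate a trivial union $U$ with $|U| \geq 2$; (2) show that the subformula $U$, viewed as a binary relation between its two ``key variables'' $z_1, z_2$ (with $KeySet_\Phi(U,z_1) \cup KeySet_\Phi(U,z_2) = UVar(\Phi)$ after projecting away all the internal bound variables), can be replaced by a single occurrence of arity $\leq 2$ joining the parts $Part(\Phi, U, z_1)$ and $Part(\Phi, U, z_2)$ hanging off each side; (3) verify the replacement still realizes the same predicate in $MAX(G)$ and still lies in $GF(G)$; (4) observe that this strictly decreases, say, the total number of occurrences (or the total number of occurrences lying in nontrivial-but-cyclic position — here all cycles are in trivial unions), so iterating terminates in a formula with no cycles at all, i.e. a tree-formula.

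Concretely, I would argue as follows. Suppose $NUnions(\Phi) = \varnothing$ or all nontrivial unions are singletons, but $\Phi$ is not yet a tree-formula; then some \emph{trivial} union $U$ has $|U| \geq 2$, hence contains a cycle. Write $z_1 = KeyVar_\Phi(U,\cdot)$ and $z_2$ for the two variables with $KeySet_\Phi(U,z_1) \cup KeySet_\Phi(U,z_2) = UVar(\Phi)$; by Lemma~\ref{edinstvenostPeremennoyVUnion} every unbound variable reaches $U$ through $z_1$ or through $z_2$, and deleting the occurrences of $U$ disconnects $\Phi$ into $Part(\Phi,U,z_1)$ and $Part(\Phi,U,z_2)$ together with the interior of $U$. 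Let $\sigma(z_1,z_2) = \exists(\text{interior vars of }U)\; U$; this is a predicate in $[G]$ of arity $\leq 2$, hence by the standing hypothesis $[G]\cap(R_k^1\cup R_k^2)\subseteq G$ it is itself in $G$, so it is a legitimate single occurrence. Replace the subformula $U$ of $\Phi$ by the single occurrence $\sigma(z_1,z_2)$; call the result $\Phi''$. Since $\rho$ is essential and the two parts attach only at $z_1, z_2$, existentially binding the interior of $U$ changes nothing about the realized predicate, so $\Phi''$ realizes $\rho \in MAX(G)$; it is still connected; each unbound variable is still used once; and $\Phi'' \in GF(G)$. But $\Phi''$ has strictly fewer occurrences than $\Phi$ (a union of size $\geq 2$ got collapsed to one), so induction on the number of occurrences applies.

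The base case is that $\Phi$ has no union of size $\geq 2$ at all, trivial or not; then every cycle contains exactly one occurrence, which is exactly the definition of a tree-formula, and we are done with $\Phi' = \Phi$. The main obstacle is step~(3): showing that collapsing the trivial union really preserves the realized predicate and keeps us inside $GF(G)$ — in particular that the new occurrence $\sigma$ genuinely depends only on $z_1$ and $z_2$ in the way needed (this is where triviality of $U$, i.e. $KeySet_\Phi(U,z_1)\cup KeySet_\Phi(U,z_2)=UVar(\Phi)$, does the work, via Lemma~\ref{edinstvenostPeremannoyVUnion} to rule out a third variable of $U$ being a key variable for some unbound variable), and that essentiality of $\rho$ forbids any interference between the two parts that the cyclic structure of $U$ was secretly carrying. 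Once that is nailed down, everything else is bookkeeping about the definitions of $Part$, $KeySet$, and $GF(G)$.
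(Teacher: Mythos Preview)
Your overall strategy---collapse each trivial union of size $\ge 2$ to a single binary occurrence and induct---is exactly the paper's. But there is a genuine gap in your construction of $\sigma(z_1,z_2)$. Triviality of $U$ tells you that every \emph{unbound} variable reaches $U$ through $z_1$ or $z_2$; it does \emph{not} say that $z_1,z_2$ are the only variables of $U$ through which the rest of $\Phi$ attaches. There may be nonempty subformulas $Part(\Phi,U,z_i)$ hanging off the remaining variables $z_3,\dots,z_n\in Var(U)$; these contain no unbound variables (that is precisely what $KeySet_\Phi(U,z_i)=\varnothing$ says), but they still impose constraints coupling $z_i$ to the interior of $U$. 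If you set $\sigma(z_1,z_2)=\exists z_3\cdots\exists z_n\,U$ and merely delete the occurrences of $U$ from $\Phi$, those constraints become decoupled: the $z_i$ witnessing $\sigma$ need no longer agree with the $z_i$ satisfying the dangling part. Your $\Phi''$ may then realize a predicate strictly larger than $\rho$, and it will fail to be connected---so both of your claims in step~(3) are unjustified as stated.

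The fix, which is what the paper does, is to absorb the dangling parts into the replacement: define
\[
\rho(z_1,z_2)=\exists z_3\cdots\exists z_n\;\Bigl(U\ \wedge\ \bigwedge_{3\le i\le n}Part(\Phi,U,z_i)\Bigr).
\]
This is still in $[G]\cap R_k^2\subseteq G$, and replacing $U$ together with all the $Part(\Phi,U,z_i)$ for $i\ge 3$ by the single occurrence $\rho(z_1,z_2)$ now genuinely preserves the realized predicate and leaves a connected formula. With that correction your induction (the paper uses the number of unions of size $>1$; your total-occurrence count works too) goes through.
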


\begin{proof}

Let $n(\Phi)$ be the number of unions $U\in Unions(\Phi)$ such that $|U|>1.$
Let us prove this lemma by induction on $n(\Phi).$
If $n(\Phi)=0$ then $\Phi$ is a tree-formula and there is nothing to prove.

Assume that $n(\Phi)>0.$
Hence there exists a union $U\in Unions(\Phi)$ such that $|U|>1.$
Since $MaxUnion(\Phi)=1,$
$U$ is a trivial union.

Let $Z=\{z_{1},\ldots,z_{n}\} = Var(U).$
Since $U$ is trivial, there exist $i$ and $j$ such that
$$KeySet_{\Phi}(U, z_{i}) \cup KeySet_{\Phi}(U, z_{j}) = UVar(\Phi).$$
Without loss of generality it can be assumed that $i = 1,$ $j = 2.$
Let
$$\rho(z_{1},z_{2}) = \exists z_{3}\exists z_{4}\ldots\exists z_{n}\;
U \wedge \bigwedge \limits_{3\le i\le n} Part(\Phi,U,z_{i}).$$
Suppose $\Phi_{1}$ is obtained from $\Phi$ by replacing $U$ by $\rho(z_{1},z_{2}).$
It can be easily checked that $\Phi_{1}$ realizes the same predicate as $\Phi.$
Note that all other unions in $\Phi$ are not changed.
Hence, $n(\Phi_{1})=n(\Phi)-1,$
and $|U'|=1$ for every $U'\in NUnions(\Phi_{1}).$
Hence, by the inductive assumption
there exists a tree-formula $\Phi'\in GF(G).$
This completes the proof.

\end{proof}

\begin{theorem}\label{estTreeFormula}

There exists a tree-formula in $GF(G).$

\end{theorem}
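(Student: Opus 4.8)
The plan is to prove Theorem~\ref{estTreeFormula} by combining the earlier structural machinery with a descent argument on the invariant $\Upsilon$. We already know from the preceding lemmas that $GF(G)$ is nonempty, so pick any $\Phi\in GF(G)$. If $NUnions(\Phi)=\varnothing$, then $MaxUnion(\Phi)=1$ by convention and Lemma~\ref{UbratTrivialnie} immediately produces a tree-formula in $GF(G)$. Hence the only interesting case is when $\Phi$ has a nontrivial union, so that $\Upsilon(\Phi)$ is defined. The strategy is then to show that as long as a nontrivial union of size greater than $1$ exists, the \textbf{Transformation} can be applied to strictly decrease $\Upsilon$ in the linear order on $\mathbb N\times\mathbb N\times\mathbb N\times\mathbb N$, and that this cannot go on forever by Lemma~\ref{posledovatelnostKonechna}.

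First I would set up the descent. Suppose $\Phi\in GF(G)$ has $MaxUnion(\Phi)>1$, and let $U_0\in Max(\Phi)$ be a nontrivial union realizing $MinMaxKeySet(\Phi)$ among the members of $Max(\Phi)$ (choosing, say, to minimize $Number$-related quantities if needed). Pick an occurrence $\Psi\in U_0$ and a bound variable $y$ occurring in $\Psi$ and used at least twice in $U_0$ — such a $y$ exists because $U_0$ is a nontrivial union of size $>1$, so it contains a genuine cycle through several occurrences, forcing some bound variable to be shared. Form $\Omega=Trans(\Phi,y,\Psi)\in GF(G)$. Now I would analyze how each coordinate of $\Upsilon$ changes. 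By Lemma~\ref{estPredok}, every nontrivial union $U'$ of $\Omega$ either has $|U'|<MaxUnion(\Phi)$ or has a nontrivial parent in $\Phi$; combined with Lemma~\ref{unionlejitvkusochke} this controls $MaxUnion(\Omega)\le MaxUnion(\Phi)$. The parent of the unions coming from $U_0$ is $U_0$ itself with $y$ reinstated, and since $\Psi\in U_0$ the occurrence $\Psi$ is ``split'' across the copies, so Lemmas~\ref{UnionNeSoderjitPsi} and~\ref{UnionSoderjitPsi} give that either the size drops or the parameter $Par$ drops by one (because $y$ was used at least twice in $U_0$, so $|Var|$ grows). For unions whose parent is a different maximal union, Lemma~\ref{unionMaxKeySet} shows $MaxKeySet_\Omega(U')\ge MaxKeySet_\Phi(U)$, with strict inequality whenever several children arise, which is exactly what is needed to push the third coordinate up (recall the order treats larger $MinMaxKeySet$ as smaller) or to reduce $Number$. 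Putting these together yields $\Upsilon(\Omega)<\Upsilon(\Phi)$, possibly after also invoking Lemma~\ref{trivialnieOstayutsya} to confirm trivial unions stay trivial and hence do not spoil the count.

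Next I would close the induction. The third coordinate $MinMaxKeySet$ is bounded above by $|UVar|$, which does not exceed the arity of the maximal predicate in $MAX(G)$, a fixed number since $|[G]\cap\widetilde R_k|<\infty$; call this bound $C$. So the sequence of formulas $\Phi=\Phi^{(0)},\Phi^{(1)},\dots$ obtained by iterating the transformation satisfies the hypotheses of Lemma~\ref{posledovatelnostKonechna}: the $\Upsilon$-values are nonincreasing and their third coordinates are bounded. Therefore the sequence stabilizes, meaning we reach some $\Phi^{(N)}\in GF(G)$ on which the transformation can no longer strictly decrease $\Upsilon$. The only way this can happen, given the analysis above, is that $\Phi^{(N)}$ has no nontrivial union of size greater than $1$; but a nontrivial union is in particular a union, and a union of size $1$ is automatically trivial if... — more carefully, $NUnions(\Phi^{(N)})$ consists only of singletons, whence $MaxUnion(\Phi^{(N)})=1$. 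Applying Lemma~\ref{UbratTrivialnie} to $\Phi^{(N)}$ produces a tree-formula in $GF(G)$, completing the proof.

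The main obstacle I expect is the bookkeeping in the third step of the descent: verifying carefully that when the transformation does not decrease the first two coordinates of $\Upsilon$, it must strictly decrease either $MinMaxKeySet$ (via Lemma~\ref{unionMaxKeySet} producing a union with strictly larger $MaxKeySet$, which the chosen order counts as smaller) or, failing that, $Number(\Phi)$ — and that it never does the reverse. This requires pinning down exactly which union of $\Omega$ lies in $Max(\Omega)$ and tracing the identities $\Upsilon(\widetilde\Phi)=(5,3,6,1)$, $\Upsilon(\Omega)=(3,2,4,1)$ illustrated in the worked example to the general case. One must also be slightly delicate about the edge case where $Max(\Phi)$ is a single union $U_0$: then there are no ``other'' maximal unions to apply Lemma~\ref{unionMaxKeySet} to, so the decrease must come entirely from the size/$Par$ drop on the children of $U_0$, which is where the requirement that $y$ be used at least twice in $U_0$ becomes essential.
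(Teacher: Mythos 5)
Your proposal is correct and follows essentially the same line as the paper's own proof: iterate the transformation $Trans(\Phi,y,\Psi)$ on a union in $Max(\Phi)$ realizing $MinMaxKeySet$, use Lemmas~\ref{estPredok}, \ref{UnionNeSoderjitPsi}, \ref{UnionSoderjitPsi}, and \ref{unionMaxKeySet} to show $\Upsilon$ strictly decreases, terminate via Lemma~\ref{posledovatelnostKonechna}, and finish with Lemma~\ref{UbratTrivialnie}. The bookkeeping details you flag as concerns are exactly the cases the paper works through, so no new idea is needed.
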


\begin{proof}

It follows from Lemma \ref{VycherkivanieSvobodnih} that
the set $GF(G)$ is not empty.
Suppose $\Phi_{0}\in GF(G).$
Let us construct a consequence
$\Phi_{0},\Phi_{1},\ldots,\Phi_{r}$
such that
$\Upsilon(\Phi_{i+1})< \Upsilon(\Phi_{i})$ for every $i,$
$MaxUnion(\Phi_{r}) = 1.$

Suppose we have $\Phi_{i}.$
If $MaxUnion(\Phi_{i})=1,$  then we put $i=r$ and the consequence is complete.
Suppose $MaxUnion(\Phi_{i})>1.$
Suppose $U\in Max(\Phi_{i})$
and
$MaxKeySet_{\Phi}(U) = MinMaxKeySet(\Phi_{i}).$
Let $\Psi\in U,$ and suppose
a variable $y\in Var(\Psi)$ is used in $U$ at least twice.
Obviously such variable exists.
Put $\Phi_{i+1} =
Trans(\Phi_{i}, y, \Psi).$

Let us prove that $\Upsilon(\Phi_{i+1})<\Upsilon(\Phi_{i}).$
By Lemma \ref{estPredok},
we have
$$MaxUnion(\Phi_{i+1}) \le MaxUnion(\Phi_{i}).$$
If $MaxUnion(\Phi_{i+1}) < MaxUnion(\Phi_{i}),$ then
$\Upsilon(\Phi_{i+1})<\Upsilon(\Phi_{i}).$
If
$$MaxUnion(\Phi_{i+1}) = MaxUnion(\Phi_{i}),\;\;
MaxPar(\Phi_{i+1}) < MaxPar(\Phi_{i}),$$
then $\Upsilon(\Phi_{i+1})<\Upsilon(\Phi_{i}).$

Suppose
$$MaxUnion(\Phi_{i+1}) = MaxUnion(\Phi_{i}),\;\;
MaxPar(\Phi_{i+1}) = MaxPar(\Phi_{i}).$$
By Lemma~\ref{estPredok}, Lemma \ref{UnionNeSoderjitPsi}
and Lemma \ref{UnionSoderjitPsi},
it follows that
for every $U'\in Max(\Phi_{i+1})$ there is $U_{0}\in Max(\Phi_{i})$
such that $U_{0}$ is a parent of $U'.$
Using Lemma~\ref{unionMaxKeySet},
we get $$MaxKeySet_{\Phi}(U')\ge MaxKeySet_{\Phi}(U_{0}).$$
Hence $$MinMaxKeySet(\Phi_{i+1})\ge MinMaxKeySet(\Phi_{i}).$$
If $MinMaxKeySet(\Phi_{i+1})>MinMaxKeySet(\Phi_{i})$
then $\Upsilon(\Phi_{i+1})<\Upsilon(\Phi_{i}).$

Suppose $$MinMaxKeySet(\Phi_{i+1})=MinMaxKeySet(\Phi_{i}).$$
Let $W= \{U_{1}',U_{2}',\ldots,U_{s}'\}$ be the set of all $U'\in Max(\Phi_{i+1})$
such that $$MaxKeySet_{\Phi_{i+1}}(U') =MinMaxKeySet(\Phi_{i+1}).$$
As it was proved earlier,
for every $j$
there is $U_{j}\in Max(\Phi_{i})$
such that $U_{j}$ is a parent of $U_{j}'.$
By Lemma \ref{unionMaxKeySet},
it follows that
$$MaxKeySet_{\Phi_{i}}(U_{j}) \le MaxKeySet_{\Phi_{i+1}}(U_{j}')$$
for every $j.$
Hence, 
\begin{multline*}MinMaxKeySet(\Phi_{i})\le MaxKeySet_{\Phi}(U_{j}) \le \\ \le
MaxKeySet_{\Phi}(U_{j}') = MinMaxKeySet(\Phi_{i+1}).\end{multline*}
So we have $MaxKeySet_{\Phi}(U_{j}) = MinMaxKeySet(\Phi_{i}).$
Also by Lemma \ref{unionMaxKeySet},
we get $U_{j}\neq U_{l}$ for all $j\neq l.$
By Lemma \ref{UnionSoderjitPsi}, it follows that
$U_{j}\neq U$ for every $j.$
Therefore $Number(\Phi_{i+1})<Number(\Phi_{i})$
and $\Upsilon(\Phi_{i+1})<\Upsilon(\Phi_{i}).$

By Lemma \ref{posledovatelnostKonechna},
the sequence $\Phi_{1},\Phi_{2},\Phi_{3},\ldots$ can not be infinite.
Hence there exists $r$ such that $MaxUnion(\Phi_{r})=1$.
By Lemma \ref{UbratTrivialnie}, there exists a tree-formula $\Phi'\in GF(G).$


\end{proof} 

\section{Proof of the main theorems}

Let us prove two theorems from Section 2.
The next theorem is very similar to the following simple statement from graph theory:
suppose a tree has $p$ vertices and a degree of every vertex is less than $k,$ 
then there exists a simple path in the tree which length is greater than $\log_{k}p.$

\newcounter{backup}
\setcounter{backup}{\value{theorem}}
\setcounter{theorem}{\value{dveteoremy}}

\begin{theorem}

Suppose $SR_{k}\subseteq G,$
$|[G]\cap \widetilde R_{k}|<\infty,$
$ar([G]\cap \widetilde R_{k})=p,$
$ar(G)=q,$
then there exist $\rho \in [G]\cap \widetilde R_{k},$
$\rho_{1},\rho_{2},\ldots,\rho_{n}\in [G]$
such that
\begin{multline*}\rho(x_{1},\ldots,x_{n}) = \exists y_{1}\exists y_{2}\ldots\exists y_{n-1}\;
\rho_{1}(x_{1},y_{1})\wedge
\rho_{2}(y_{1},x_{2},y_{2})\wedge \\ \wedge
\rho_{3}(y_{2},x_{3},y_{3})\wedge \ldots \wedge
\rho_{n-1}(y_{n-2},x_{n-1},y_{n-1})\wedge
\rho_{n}(y_{n-1},x_{n})
\end{multline*}
and
$ar(\rho)>\log_{k\cdot q}(p).$

\end{theorem}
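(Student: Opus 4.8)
The plan is to realize a suitable essential predicate by a ``path‑like'' formula extracted from a tree‑formula. First I would pass to the standing hypotheses of Section~7: replacing $G$ by $G\cup([G]\cap(R_k^1\cup R_k^2))$ changes neither $[G]$ nor $[G]\cap\widetilde R_k$, keeps $SR_k\subseteq G$, and — if $ar(G)\ge 2$ — preserves $q=ar(G)$; the cases $ar(G)\le 1$ are immediate, $|G|<\infty$ may be assumed (it holds in the application, and $[G]$ is finitely generated), so assume $G$ satisfies those hypotheses. By Theorem~\ref{estTreeFormula} fix a tree‑formula $\Phi\in GF(G)$; it realizes some $\tau\in MAX(G)$, and since $\tau$ has maximal essential arity, $ar(\tau)=p$, so $\Phi$ has exactly $p$ unbound variables, each used once. (If $p$ is bounded the statement is trivial, so assume $p$ large.)

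Next I would read $\Phi$ as a tree $T$: its vertices are the variables and the occurrences of $\Phi$, and an occurrence is joined to each variable it uses; connectedness and the tree‑formula property make $T$ a tree. Each occurrence‑vertex has degree $ar(\Psi)\le q$, and each bound‑variable‑vertex $v$ has degree $\le k$, because deleting $v$ breaks $\Phi$ into the pieces lying in the directions of the occurrences through $v$, which pairwise share only $v$, so $\tau$ has the form $\exists v\,\rho_1(v,\dots)\wedge\cdots\wedge\rho_d(v,\dots)$ with pairwise disjoint remaining variable lists, whence $d\le k$ by Lemma~\ref{nebolshek}. Then I would normalize: any pendant subtree of $T$ containing no unbound variable contributes a unary predicate, which lies in $[G]\cap R_k^1\subseteq G$ and can be absorbed into the neighbouring occurrence without changing its set of variables; after finitely many such steps every leaf of $T$ is one of the $p$ unbound variables, so $T$ is a tree with $p$ leaves and maximum degree $\le kq$.

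The graph‑theoretic heart is precisely the remark preceding the theorem. Contracting the degree‑$2$ vertices of $T$ yields a tree whose internal vertices (``branch points'') have degree in $[3,kq]$ and whose leaves are the $p$ unbound variables; rooting it at a leaf and using that a rooted tree of depth $d$ with branching at most $kq-1$ has at most $(kq-1)^{d}$ leaves, one gets a leaf‑to‑leaf path through more than $\log_{k\cdot q}(p)$ branch points. Along this path I would select bound variables $y_1,\dots,y_{n-1}$ of $\Phi$, one at each branch point, with $n>\log_{k\cdot q}(p)$. Deleting them and reassigning to adjacent pieces the side‑branches hanging off the $y_j$ cuts $\Phi$ into $n$ parts $B_1,\dots,B_n$, where $B_1$ contains the first endpoint $x_1$, $B_n$ the other endpoint $x_n$, and each middle $B_j$ inherits the side‑branches at one branch point, hence contains at least one unbound variable; keep one such variable $x_j$ per middle part and substitute constants (chosen next) for all the other, discarded, unbound variables. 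Letting $\rho_j$ be the predicate realized by $B_j$ with all its variables other than its boundary $y$'s and its kept variable $x_j$ existentially quantified, we obtain $\rho_1,\dots,\rho_n\in[G]$ and, by gluing the subformulas back together,
$$\rho(x_1,\dots,x_n)=\exists y_1\dots\exists y_{n-1}\;\rho_1(x_1,y_1)\wedge\rho_2(y_1,x_2,y_2)\wedge\cdots\wedge\rho_n(y_{n-1},x_n).$$

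Finally I would check $\rho\in\widetilde R_k$. By Lemma~\ref{sushnabor}, the essential predicate $\tau$ has an essential tuple $(a_1,\dots,a_p)$ with witnesses $(b_1,\dots,b_p)$. Arranging the kept variables to be coordinates $1,\dots,n$ of $\tau$ and sending the discarded coordinate $j$ to the constant $a_j$ (legitimate since $\rho_{=,a_j}\in SR_k\subseteq G$), the predicate defined by the displayed chain is exactly $\rho(x_1,\dots,x_n)=\tau(x_1,\dots,x_n,a_{n+1},\dots,a_p)$; then $\rho(a_1,\dots,a_n)=\tau(a_1,\dots,a_p)=0$ while $\rho(a_1,\dots,b_i,\dots,a_n)=\tau(a_1,\dots,b_i,\dots,a_n,a_{n+1},\dots,a_p)=1$ for every $i\le n$, so $(a_1,\dots,a_n)$ is an essential tuple for $\rho$, and by Lemma~\ref{sushnabor} $\rho$ is essential with $ar(\rho)=n>\log_{k\cdot q}(p)$. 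The part I expect to be genuinely delicate is not any isolated step but the combinatorics of the third paragraph: making precise the choice ``one bound variable at each branch point'' (branch points that are occurrence‑vertices need a nearby variable instead), verifying that the parts $B_j$ really share only the chosen $y_j$ so that the chain reassembles to $\tau$ with the discarded coordinates fixed, and pinning down the exact base $k\cdot q$ of the logarithm rather than a weaker one — which I would carry out with the apparatus of Section~7 ($Unions$, $KeyVar$, $Part$) in place of the informal tree picture sketched here.
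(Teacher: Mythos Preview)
Your proposal is correct and follows essentially the same approach as the paper: obtain a tree-formula $\Phi\in GF(G)$ via Theorem~\ref{estTreeFormula}, bound the branching at each bound variable by $k$ using Lemma~\ref{nebolshek} (and at each occurrence by $q$), extract a path of length exceeding $\log_{kq}(p)$ through the resulting bounded-degree tree, plug in constants from an essential tuple for the discarded unbound variables (using $SR_k\subseteq G$), and verify essentiality of the resulting chain predicate via Lemma~\ref{sushnabor}. The only presentational difference is that the paper carries this out as an iterative greedy walk (at each variable $z_i$, step toward the neighbour $z$ maximizing $|KeySet_{\Phi_i}(\Psi_{j_0},z)|$, so that $|UVar(\Xi_{i+1})|-1\ge(|UVar(\Xi_i)|-1)/(kq)$), whereas you phrase the same bound as a global depth estimate on the pruned tree; the substance, including the delicate points you flag about selecting bound variables at branch points and reassembling the pieces, is handled in the paper by exactly the $Part$ machinery you anticipate.
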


\begin{proof}

If $q=1$ then it can be easily checked that $p = 2$ and
there is nothing to prove.
Suppose $q\ge 2.$ Then without loss of generality it can be assumed that
$[G]\cap (R_{k}^{1}\cup R_{k}^{2}) \subseteq G.$

By Lemma \ref{estTreeFormula}, there exists a tree-formula $\Phi\in GF(G).$
Suppose this formula realizes an essential predicate $\sigma_{1}.$
Suppose $\{t_{1},t_{2},\ldots,t_{p}\} = UVar(\Phi).$
Let $z_{1} = t_{1},$
$\Delta_{1} = \sigma_{k}^{=}(x_{1},z_{1}),$
$$\Xi_{1} = \exists t_{2}\ldots\exists t_{p}\;
\sigma_{k}^{=}(x_{2},t_{2})\wedge \ldots \wedge
\sigma_{k}^{=}(x_{p},t_{p})\wedge \Phi.$$
Put $\Phi_{1} = \exists z_{1}\; \Delta_{1}\wedge \Xi_{1}.$
It can be easily checked that
$\Phi_{1}$ is a tree-formula from $GF(G),$ $\Phi_{1}$ realizes $\sigma_{1}.$

Let us define three sequences
$\Delta_{1},\Delta_{2},\ldots,$
$\Xi_{1},\Xi_{2},\ldots,$
$z_{1},z_{2},\ldots$ inductively.

Suppose we have $\Delta_{i},\Xi_{i},z_{i},$
$\Phi_{i} = \exists z_{i}\; \Delta_{i}\wedge \Xi_{i}.$
Let $\{u_{1},\ldots,u_{s}\} = UVar(\Phi_{i}),$
$\Phi_{i}(u_{1},\ldots,u_{s}) = \sigma_{i}(u_{1},\ldots,u_{s}),$
$\sigma_{i}$ is an essential predicate.
By Lemma \ref{sushnabor}, there exists an essential tuple $(a_{1},\ldots,a_{s})$ for $\sigma_{i}.$
Hence there exists a mapping $\phi:UVar(\Phi_{i})\rightarrow E_{k}$ such that
$\phi(u_{i}) = a_{i}.$

Suppose $z_{i}$ is used in occurrences $\Psi_{1},\ldots,\Psi_{r}$ in $\Xi_{i}.$
Let $Z_{j} = Var(\Psi_{j})\setminus \{z_{i}\}.$
Let $$L_{j} = \bigcup\limits_{z\in Z_{j}} KeySet_{\Phi_{i}}(\{\Psi_{j}\},z).$$
Let $d$ be the number of nonempty sets in $\{L_{1},\ldots,L_{r}\}.$
It follows from Lemma~\ref{nebolshek} that $d\le k.$
Since $|Z_{j}|<q$ for every $j,$
there exists $j_{0}$ and $z\in Z_{j_{0}}$ such that
$|KeySet_{\Phi_{i}}(\Psi_{j_{0}},z)|\ge(|UVar(\Xi_{i})|-1) /(k\cdot q).$

Assume that $KeySet_{\Phi_{i}}(\Psi_{j_{0}},z) = UVar(\Xi_{i})\setminus\{z_{i}\}.$
Put $$\{w_{1},\ldots,w_{l}\} = Z_{1}\cup Z_{2}\cup\ldots \cup Z_{r}\setminus \{z\},$$
$$\rho'(z_{i},z) =
\exists w_{1}\ldots \exists w_{l} \;
\bigwedge\limits_{j\in \{1,2,\ldots,r\}} \Psi_{j} \wedge
\bigwedge\limits_{\substack{j\in \{1,2,\ldots,r\}\\ t\in Z_{j}\setminus \{z\}}} Part(\Phi, \Psi_{j}, t).$$
Put $$\Delta_{i+1} =
\exists z_{i}\;
\Delta_{i}\wedge \rho'(z_{i},z),$$
$$\Xi_{i+1} = Part(\Xi_{i}, \Psi_{j_{0}}, z).$$

Assume that $|KeySet_{\Phi_{i}}(\Psi_{j_{0}},z)| < |UVar(\Xi_{i})|-1.$
Then there exists $$\tau \in Uvar(\Xi_{i})\setminus (\{z_{i}\}\cup KeySet_{\Phi_{i}}(\Psi_{j_{0}},z)).$$
Put $$\{w_{1},\ldots,w_{l}\} = Z_{1}\cup Z_{2}\cup\ldots \cup Z_{r} \cup
UVar(\Xi_{i})\setminus (KeySet_{\Phi_{i}}(\Psi_{j_{0}},z)\cup\{\tau,z_{i}\}).$$
$$\{\zeta_{1},\ldots,\zeta_{q}\} =
UVar(\Xi_{i})\setminus (KeySet_{\Phi_{i}}(\Psi_{j_{0}},z)\cup\{\tau\}).$$
Let
\begin{multline*}\rho'(z_{i},\tau,z) =
\exists w_{1}\ldots \exists w_{l} \;
(\zeta_{1} = \phi(\zeta_{1})) \wedge
\ldots
\wedge
(\zeta_{q} = \phi(\zeta_{q})) \wedge \\ \wedge
\bigwedge\limits_{j \in \{1,2,\ldots,r\}} \Psi_{j} \wedge
\bigwedge\limits_{\substack{j\in \{1,2,\ldots,r\}\\ t\in Z_{j}\setminus \{z\}}} Part(\Phi, \Psi_{j}, t).\end{multline*}

Put $$\Delta_{i+1} =
\exists z_{i}
\Delta_{i} \wedge \rho'(z_{i},\tau,z),$$
$$\Xi_{i+1} = Part(\Xi_{i}, \Psi_{j_{0}}, z).$$

We suppose that
$\Xi_{i+1}$ is not defined if $\Xi_{i}$ contains just one predicate.
It can be easily checked that
this predicate is $\sigma_{k}^{=}.$
In this cases we say that $\Xi_{i}$ is the last member of the sequence.

It can be easily checked that
$\Xi_{i+1}$ is shorter then $\Xi_{i}.$
Hence, this sequence can not be infinite.
Suppose $\Delta_{f},$ $\Xi_{f},$ $z_{f}$ are the last members of these sequences.

It follows from the definition that
for every $i$
one of the following conditions hold

\begin{enumerate}

\item 
$|UVar(\Delta_{i+1})| = |UVar(\Delta_{i})|,$
$|UVar(\Xi_{i+1})| = |UVar(\Xi_{i})|.$

\item $|UVar(\Delta_{i+1})| = |UVar(\Delta_{i+1})|+1,$
$|UVar(\Xi_{i+1})|-1 \ge (|UVar(\Xi_{i})|-1)/(k\cdot q).$
\end{enumerate}

Since $|UVar(\Xi_{1})| = p,$
$|UVar(\Delta_{1})| = 2,$
it follows that $$|UVar(\Delta_{f})|\ge log_{k\cdot q}(p-1).$$

Suppose there exists a subformula in $\Phi_{f}$
of the form
$\exists t_{3}\;\delta(t_{1},t_{2},t_{3})\wedge \gamma(t_{3},t_{4}),$
then it can be replaced by
$\delta'(t_{1},t_{2},t_{4}) = \exists t_{3}\;\delta(t_{1},t_{2},t_{3})\wedge \gamma(t_{3},t_{4}).$

Using this transformation and by renaming variables in $\Phi_{f}$
we get a formula of the following form
\begin{multline*}\rho(x_{1},\ldots,x_{n}) = \exists y_{1}\exists y_{2}\ldots\exists y_{n-1}\;
\rho_{1}(x_{1},y_{1})\wedge
\rho_{2}(y_{1},x_{2},y_{2})\wedge \\ \wedge
\rho_{3}(y_{2},x_{3},y_{3})\wedge \ldots \wedge
\rho_{n-1}(y_{n-2},x_{n-1},y_{n-1})\wedge
\rho_{n}(y_{n-1},x_{n}).
\end{multline*}
where $n = |UVar(\Delta_{f})| +1 \ge log_{k\cdot q}(p-1) +1 > log_{k\cdot q}p.$
Moreover,
it can be easily checked that
$(\phi(x_{1}),\phi(x_{2}),\ldots,\phi(x_{n}))$ is an essential tuple for $\rho.$
Hence, $\rho$ is an essential predicate.
This completes the proof.

\end{proof}

\begin{theorem} 

Suppose $\rho \in \widetilde R_{k},$ $G\subseteq R_{k},$ $\rho_{1},\rho_{2},\ldots,\rho_{n}\in [G]$
\begin{multline*}\rho(x_{1},\ldots,x_{n}) = \exists y_{1}\exists y_{2}\ldots\exists y_{n-1}\;
\rho_{1}(x_{1},y_{1})\wedge
\rho_{2}(y_{1},x_{2},y_{2})\wedge \\ \wedge
\rho_{3}(y_{3},x_{3},y_{3})\wedge \ldots \wedge
\rho_{n-1}(y_{n-2},x_{n-1},y_{n-1})\wedge
\rho_{n}(y_{n-1},x_{n})
\end{multline*}
where $n>2^{2k^{2}}+2.$
Then $|[G] \cap \widetilde R_{k}| = \infty.$
\end{theorem}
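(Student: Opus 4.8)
The plan is to show that from the given chain-formula of length $n > 2^{2k^2}+2$ one can manufacture essential predicates of arbitrarily large arity, so that $[G]\cap\widetilde R_k$ cannot be finite. Assume the converse: $|[G]\cap\widetilde R_k|<\infty$. By Lemma \ref{dobavleniePredikatovSR} we may pass to $G\cup SR_k$, and by Lemma \ref{SuzhZamkn} and the discussion after Corollary \ref{NUFmainCo} this does not change finiteness; so assume $SR_k\subseteq G$ and $|[G]\cap\widetilde R_k|<\infty$. The first move is to fix, using Lemma \ref{sushnabor}, an essential tuple $(a_1,\dots,a_n)$ for $\rho$ together with witnesses $b_1,\dots,b_n$, i.e. $\rho(a_1,\dots,a_n)=0$ but flipping any single coordinate to $b_i$ yields a tuple in $\rho$. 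Reading this back through the chain-formula, each $\rho_i(y_{i-1},x_i,y_i)$ must be satisfiable along the ``main branch'' by suitable internal values $c_0,\dots,c_n\in E_k$ (the values taken by $y_0=x_1$-side through $y_{n-1}$), but no single consistent assignment of the $y$'s works simultaneously with all $x_j=a_j$.

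Next I would encode the internal behaviour of the chain as a sequence of binary relations on $E_k$: for each coordinate $i$, and each choice of ``external'' value $x_i\in\{a_i,b_i\}$, the formula $\exists x_i\,\rho_i(u,x_i,v)$ defines a relation $S_i^{a}$ or $S_i^{b}$ on pairs $(u,v)\in E_k^2$. The composite of $S_1^{a_1},\dots,S_n^{a_n}$ is empty (since $\rho(a)=0$), while replacing exactly one factor $S_i^{a_i}$ by $S_i^{b_i}$ gives a nonempty composite (since the $i$-th flip lies in $\rho$). Since there are only $2^{k^2}$ relations on $E_k^2$, among the $n-1>2^{2k^2}+1$ partial composites $T_m = S_1^{a_1}\circ\cdots\circ S_m^{a_m}$ (for $m=1,\dots,n-1$) two coincide, say $T_p=T_{p'}$ with $p<p'$; likewise for ``tail'' composites. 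This repetition is the pumping mechanism: the block of factors between positions $p$ and $p'$ can be inserted any number of times without changing the composite, yet the overall composite stays empty while each single $b$-flip survives arbitrarily many insertions of the block (one keeps the flip in a fixed position and pumps away from it). The hard part is exactly here: one must argue that after pumping, flipping a single coordinate to its $b$-value still produces a consistent assignment — this requires that the repeated block, viewed as a relation, is idempotent-like enough, which is why one needs the repetition to occur for the relations arising on both sides of a fixed coordinate, forcing $n$ to exceed $2\cdot 2^{k^2}$ plus a constant, matching the bound $2^{2k^2}+2$.

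Having pumped, for each $N$ I obtain a chain-formula of length $N$ defining a predicate $\rho^{(N)}\in[G]$ together with a tuple $(a_1,\dots,a_N)$ (the original tuple with the pumped block repeated, all external coordinates kept) that is essential for $\rho^{(N)}$: the composite is empty so $\rho^{(N)}(a)=0$, and flipping any coordinate to its $b$-value gives a member of $\rho^{(N)}$. By Lemma \ref{sushnabor} the existence of an essential tuple means $\rho^{(N)}$ is itself essential — or, if one prefers a softer route, by Lemma \ref{arnostSush} applied to $\rho^{(N)}$ with the singleton blocks $\alpha_i=a_i$, $\beta_i=b_i$, there is an essential predicate in $[\{\rho^{(N)}\}]\subseteq[G]$ of arity at least $N$. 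Since $N$ is arbitrary, $ar([G]\cap\widetilde R_k)=\infty$, so $|[G]\cap\widetilde R_k|=\infty$, contradicting the assumption. Hence $|[G]\cap\widetilde R_k|=\infty$ as claimed.

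I expect the genuine obstacle to be the bookkeeping that makes the pumped tuple genuinely essential — i.e. ensuring that after reinserting the block the ``witness'' $b_i$ in each coordinate still extends to a full satisfying assignment of all the existentially quantified $y$'s. This is where the precise choice of which two composites coincide (and on which side of the distinguished coordinate) matters, and where the numerical threshold $2^{2k^2}+2$ is forced; everything else is routine manipulation of conjunctive-existential formulas together with the already-proved Lemmas \ref{sushnabor} and \ref{arnostSush}.
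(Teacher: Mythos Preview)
Your overall plan---assume finiteness, fix an essential tuple $(a_1,\dots,a_n)$ with witnesses $b_i$, encode the chain by binary relations on $E_k$, and pigeonhole on these relations to pump the chain---is exactly the paper's strategy. However, the invariant you propose to track is not the one that makes the argument go through, and this is precisely the gap you yourself flag.

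You track only the all-$a$ left composites $T_m$ and vaguely ``likewise for tail composites''; your numerology $2\cdot 2^{k^2}$ reflects this. The paper instead tracks, for each $m$, the \emph{pair} $(D_m,F_m)$ where
\[
D_m=\{(d_1,d_m):\sigma_m(d_1,a_2,\dots,a_m,d_m)=1\},\qquad
F_m=\{(d_1,d_m):\exists i\;\sigma_m(d_1,a_2,\dots,b_i,\dots,a_m,d_m)=1\},
\]
so $D_m$ is your $T_m$ and $F_m$ is the \emph{union of all single-flip left composites}. Each of $D_m,F_m$ is a subset of $E_k^2$, hence the pair $(D_m,F_m)$ ranges over at most $(2^{k^2})^2=2^{2k^2}$ values, which is where the bound $n>2^{2k^2}+2$ actually comes from; your $2\cdot 2^{k^2}$ is off by an exponent. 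The point of $F_m$ is exactly the ``hard part'' you identify: when $(D_p,F_p)=(D_q,F_q)$, the equality $F_p=F_q$ is what guarantees that every single $b$-flip in the lengthened prefix still extends to a satisfying assignment, while $D_p=D_q$ handles the all-$a$ tuple and the flips in the unchanged tail.

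One more difference worth noting: the paper does not pump to arbitrary length. Under the finiteness hypothesis it first replaces $\rho$ by a chain-defined essential predicate of \emph{maximal} arity, and then a single splice (replacing the prefix $\sigma_p$ by $\sigma_q$) produces a chain-defined predicate $\rho'$ of strictly larger arity $n+(q-p)$ with essential tuple $(a_1,\dots,a_q,a_{p+1},\dots,a_n)$, contradicting maximality. This is cleaner than building predicates of every arity $N$ and avoids the bookkeeping of iterated insertions.
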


\begin{proof}

Assume that $|[G] \cap \widetilde R_{k}| < \infty.$
Hence, it can be assumed that
$\rho$ has the maximal arity among all essential predicates that can be presented by the formula
from the condition of the theorem.
Suppose $(a_{1},\ldots,a_{n})$ is an essential tuple for $\rho.$
Therefore, there exist $b_{1},b_{2},\ldots,b_{n}\in E_{k}$ such that
$$\rho(a_{1},\ldots,a_{i-1},b_{i},a_{i+1},\ldots,a_{n})=1$$ for every $i.$
Let
\begin{multline*}\sigma_{m}(y_{1},x_{2},\ldots,x_{m-1},y_{m}) = \\ =
\exists y_{2}\exists y_{3}\ldots\exists y_{m-1}\;
\rho_{2}(y_{1},x_{2},y_{2})\wedge
\rho_{3}(y_{2},x_{3},y_{3})\wedge \ldots \wedge
\rho_{m-1}(y_{m-1},x_{m},y_{m}).
\end{multline*}
Put
$$D_{m}= \{(d_{1},d_{m}) \; | \: \sigma_{m}(d_{1},a_{2},\ldots,a_{m},d_{m})=1\},$$
$$F_{m}= \{(d_{1},d_{m}) \; | \: \exists i\; \sigma_{m}(d_{1},a_{2},\ldots,a_{i-1},b_{i},a_{i+1},\ldots,a_{m},d_{m})=1\}.$$

Since $n>2^{2k^{2}}+2,$
there exists $p,q\in \{2,\ldots,n\},1<p<q<n,$
such that $D_{p}=D_{q}$ and $F_{p}=F_{q}.$
Let
\begin{multline*}
\rho'(z_{1},z_{2}\ldots,z_{q},x_{p+1},\ldots,x_{n}) =
\exists t_{1}\exists y_{p}\exists y_{p+1}\ldots\exists y_{n-1}\;
\rho_{1}(z_{1},t_{1})\wedge \\ \wedge
\sigma_{q}(t_{1},z_{1},\ldots,z_{q},y_{p}) \wedge
\rho_{p+1}(y_{p},x_{p+1},y_{p+1})\wedge \ldots \\ \ldots \wedge
\rho_{n-1}(y_{n-2},x_{n-1},y_{n-1})\wedge
\rho_{n}(y_{n-1},x_{n}) = \\
= \exists t_{1}\exists t_{2}\ldots \exists t_{q-1}\exists y_{p}\exists y_{p+1}\ldots\exists y_{n-1}\;
\rho_{1}(z_{1},t_{1})\wedge \\ \wedge
\rho_{2}(t_{1},z_{2},t_{2})\wedge
\rho_{3}(t_{2},z_{3},t_{3})\wedge \ldots \wedge
\rho_{q-1}(t_{q-2},z_{q-1},t_{q-1})\wedge
\rho_{q}(t_{q-1},z_{q},y_{p})\wedge \\ \wedge
\rho_{p+1}(y_{p},x_{p+1},y_{p+1})\wedge \ldots \wedge
\rho_{n-1}(y_{n-2},x_{n-1},y_{n})\wedge
\rho_{n}(y_{n-1},x_{n})
\end{multline*}

It can be easily checked that
$(a_{1},a_{2},\ldots,a_{q-1},a_{q}, a_{p+1},a_{p+2},\ldots,a_{n})$ is an essential tuple
for $\rho'.$ Therefore $\rho'$ is an essential predicate. This contradicts the assumption about the maximality of
the arity of $\rho.$ The theorem is proved.

\end{proof}

\setcounter{theorem}{\value{backup}}

\end{document}